\theoremstyle{plain}
\newtheorem{theorem}{Theorem}[section]
\newtheorem{lemma}[theorem]{Lemma}
\newtheorem*{lemma*}{Lemma}
\newtheorem{proposition}[theorem]{Proposition}
\newtheorem{corollary}[theorem]{Corollary}
\theoremstyle{definition}
\newtheorem{definition}[theorem]{Definition}
\theoremstyle{remark}
\newtheorem{remark}[theorem]{Remark}
\newtheorem{example}[theorem]{Example}
\numberwithin{equation}{section}
\tikzset{
    block filldraw/.style={
        draw, fill=white, line width=0.67pt},
    block rect/.style={
        block filldraw, rectangle},
    block/.style={
        block rect, minimum height=0.8cm, minimum width=6em},
    from/.style args={#1 to #2}{
        above right={0cm of #1},
        /utils/exec=\pgfpointdiff
            {\tikz@scan@one@point\pgfutil@firstofone(#1)\relax}
            {\tikz@scan@one@point\pgfutil@firstofone(#2)\relax},
        minimum width/.expanded=\the\pgf@x,
        minimum height/.expanded=\the\pgf@y}}
\newcommand\xleftrightarrow[2][]{%
  \ext@arrow 9999{\longleftrightarrowfill@}{#1}{#2}}
\newcommand\longleftrightarrowfill@{%
  \arrowfill@\leftarrow\relbar\rightarrow}
\DeclareMathSymbol{\lsb@l}{\mathalpha}{letters}{`l}
\newcommand{\bbZ}{\mathbb{Z}}
\newcommand{\bbQ}{\mathbb{Q}}
\newcommand{\bbR}{\mathbb{R}}
\newcommand{\calG}{\mathcal{G}}
\newcommand{\calA}{\mathcal{A}}
\newcommand{\inv}{^{-1}}
\newcommand{\half}{^{1/2}}
\newcommand{\nhalf}{^{-1/2}}
\DeclareMathOperator{\sgn}{sgn}
\DeclareMathOperator{\type}{type}
\DeclareMathOperator{\isgn}{isgn}
\DeclareMathOperator{\Match}{Match}
\newcommand{\allbf}[1]{\textbf{\boldmath{#1}}}
\newcommand{\unknot}{\fullmoon}
\renewcommand{\tilde}{\widetilde}
\newcommand{\tbleft}{%
\node at (0.26,0) {\includegraphics{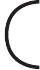}};
\node at (0.26,2) {\includegraphics{graphics/2b_capl}};}
\newcommand{\tbbotblank}[1]{%
\node at (#1,0) {\includegraphics{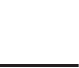}};
\node at (#1,2) {\rotatebox{180}{\includegraphics{graphics/2b0_bot}}};}
\newcommand{\tbbotpos}[1]{%
\tbbotblank{#1} \node at (#1,1) {\includegraphics{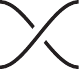}};}
\newcommand{\tbbotneg}[1]{%
\tbbotblank{#1} \node at (#1,1) {\includegraphics{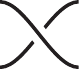}};}
\newcommand{\tbbotzero}[1]{%
\tbbotblank{#1} \node at (#1,1) {\includegraphics{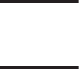}};}
\newcommand{\tbbotbox}[2]{%
\tbbotzero{#1}{}
\tbbotzero{#1+1}{}
\node[block,from={#1-0.35,0.3 to #1+0.58+0.35,1.2}] {$#2$};}
\newcommand{\tbtopblank}[1]{%
\node at (#1,0) {\includegraphics{graphics/2b0_bot}};
\node at (#1,1) {\includegraphics{graphics/2b0_bot}};}
\newcommand{\tbtopneg}[1]{%
\tbtopblank{#1} \node at (#1,2) {\includegraphics{graphics/2b-}};}
\newcommand{\tbtopzero}[1]{%
\tbtopblank{#1} \node at (#1,2) {\includegraphics{graphics/2b0}};}
\newcommand{\tbtopbox}[2]{%
\tbtopzero{#1}{}
\tbtopzero{#1+1}{}
\node[block,from={#1-0.35,1.3 to #1+0.58+0.35,2.2}] {$#2$};}
\newcommand{\tbdots}[1]{%
\node at (#1+0.5,1+0.01) {\includegraphics{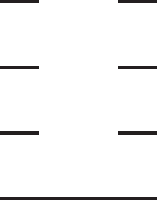}};
\node at (#1+0.5,0.5) {$\cdots$};
\node at (#1+0.5,1.5) {$\cdots$};
\node at (#1+0.5,2.5) {$\cdots$};}
\newcommand{\tboddend}[1]{%
\node at (#1-0.25,0) {\includegraphics{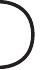}};
\node at (#1-0.25,2) {\includegraphics{graphics/2b_capr}};}
\newcommand{\tbevenend}[1]{%
\node at (#1,1) {\includegraphics{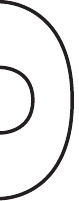}};}
\newcommand{\includegraphicsinline}[1]{\begin{gathered}\includegraphics{#1}\end{gathered}}
\title{Cluster Algebras and the HOMFLY Polynomial}
\author[M. Yacavone]{Matthew Yacavone} \address{Haverford College, Haverford, PA 19041} \email{matthew@yacavone.net}
\thanks{Code exploring the results of this paper can be found at: \url{https://github.com/m-yac/F-polys}}
\begin{document}

\begin{abstract}
Recently, it has been shown that the Jones polynomial, in \cite{lee_cluster_2019}, and the Alexander polynomial, in \cite{nagai_cluster_2018}, of rational knots can be obtained by specializing $F$-polynomials of cluster variables. At the core of both results are continued fractions, which parameterize rational knots and are used to obtain cluster variables, by way of snake graphs in the case of \cite{lee_cluster_2019}, or ancestral triangles in the case of \cite{nagai_cluster_2018}. In this paper, we use path posets, another structure parameterized by continued fractions, to directly generalize \cite{lee_cluster_2019}'s construction to a specialization yielding the HOMFLY polynomial, which generalizes both the Jones and Alexander polynomials.
\end{abstract}

\maketitle

\section{Introduction}

The core question of knot theory is how to determine, in general, whether a given knot is equivalent to another. 
Invariants, functions which respect equivalence of knots, are the main way we try to address this question, and polynomial invariants are one of the most important classes of invariants. In the 1920s, Alexander \cite{alexander_topological_1928} introduced the first of these, the Alexander polynomial $\Delta(K)$. This remained the only invariant of its kind until 1985, when the introduction of Jones' \cite{jones_polynomial_1985} polynomial $V(K)$ spurred the creation of numerous others, including the HOMFLY polynomial $P(K)$ \cite{freyd_new_1985}, which generalizes both previously mentioned.

In this paper, we discuss how to compute these polynomial invariants for rational knots, a family of knots indexed by rational numbers, by way of a surprising connection to cluster algebras. Cluster algebras were first introduced in 2002 by Fomin and Zelevinsky \cite{fomin_cluster_2002} for use in Lie theory, and have since been found to connect to many different parts of mathematics, including representation theory, combinatorics, algebraic geometry, and mathematical physics. In 2017, Lee and Schiffler \cite{lee_cluster_2019} presented a way to compute the Jones polynomial of rational knots by way of computing the $F$-polynomial of a cluster variable in an associated cluster algebra. In particular, they gave a specialization for the variables of the $F$-polynomial which yields the Jones polynomial of rational knots. The following year, Nagai and Terashima \cite{nagai_cluster_2018} gave a specialization of the $F$-polynomial which yields the Alexander polynomial of rational knots. In this paper, we present a specialization which yields the HOMFLY polynomial.

At the center of the connection between these polynomial invariants and cluster algebras are continued fractions, representations of rational numbers as lists of integers. It is natural to reason about rational knots using continued fractions, and in \cite{lee_cluster_2019}, \cite{nagai_cluster_2018}, and this paper, the strategy is to use another family of objects similarly indexed by continued fractions to compute the $F$-polynomial. In \cite{lee_cluster_2019}, these objects are snake graphs, first introduced in the context of cluster algebras in \cite{musiker_cluster_2010} as a computational tool, and later explored in \cite{musiker_positivity_2011}, \cite{musiker_bases_2013}, \cite{canakci_snake_2013}, and \cite{canakci_cluster_2018}. In \cite{nagai_cluster_2018}, these objects are ancestral triangles, whose relation to cluster algebras is explored in their paper. 
This paper provides a third perspective on $F$-polynomials, that of path posets. These posets were briefly mentioned, though left unnamed, in \cite{musiker_bases_2013}, and later explored further in \cite{bailey_cluster_2019}, where they go by the name `piecewise-linear.' 

\subsection{Statement of Main Result}

In this paper we directly generalize \cite{lee_cluster_2019}'s work to the case of the HOMFLY polynomial. Before we can state this result, we need to introduce some notation. For any knot or link $L$, the HOMFLY polynomial $P(L) \in \bbZ(l,q\half)$ is a rational function in two variables $l$ and $q\half$ with integer coefficients.\footnote{\label{fnote:HOMFLY-kinds}There are many well-known equivalent formulations of the HOMFLY polynomial. We use a slight modification of the one presented in \cite{jones_hecke_1987}, where we make the substitution $\lambda = l\inv q^{-1/2}$.} We write $P[b_1,\ldots,b_n]$ to refer to the HOMFLY polynomial of the rational knot corresponding to the continued fraction $[b_1,\ldots,b_n]$. The $F$-polynomial is a Laurent polynomial in $n$ variables $y_1,\ldots,y_n$ with integer coefficients, and we write $F[b_1,\ldots,b_n]$ to refer to the $F$-polynomial of the path poset corresponding to $[b_1,\ldots,b_n]$. In Section~\ref{ssec:homfly_formulae} we inductively define $m[b_1,\ldots,b_n] \in \bbZ(l,q\half)$, and only remark here that it is always equal to $c_0\,l^{e_1}q^{e_2}((1-l^2q)/(1-q\inv))^{e_3}$ for some $e_1,e_2 \in \bbZ$, $e_3 \in \{-1,0,1\}$, and $c_0 \in \{-1,1\}$.

\begin{theorem}\label{thm:B}
For any even continued fraction $[b_1,\ldots,b_n]$, \[ P[b_1,\ldots,b_n] = 
m[b_1,\ldots,b_n]\,
\tilde{F}[b_1,\ldots,b_n] \]
where $\tilde{F}[b_1,\ldots,b_n]$ is the $F$-polynomial $F[b_1,\ldots,b_n]$ subject to the specialization
\begin{align*}
y_1 &= l^2\left(\frac{1 - l^2q}{1 - q\inv}\right)^{\hspace{-2pt}-1} \text{ if $b_1 > 0$} \enspace\text{ or }\enspace q^{-2}\left(\frac{1 - l^2q}{1 - q\inv}\right) \text{ if $b_1 < 0$,}
\end{align*}
\begin{align*}
y_{2i} &= -l^2q, & y_{2i+1} &= -q\inv & &\text{for all } i \geq 1.
\end{align*}
\end{theorem}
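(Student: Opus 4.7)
Following the strategy pioneered in \cite{lee_cluster_2019}, the natural plan is induction on the length $n$ of the even continued fraction $[b_1,\ldots,b_n]$. The goal is to derive parallel recursions on both sides: a knot-theoretic recursion for $P[b_1,\ldots,b_n]$ obtained by applying the HOMFLY skein relation in the last twist region (resolving two crossings at a time, so as to stay within the class of even continued fractions), and a combinatorial recursion for $F[b_1,\ldots,b_n]$ coming from the inductive structure of the path poset. The multiplier $m[b_1,\ldots,b_n]$, whose inductive definition is given in Section~\ref{ssec:homfly_formulae}, is then precisely the scalar needed to make the two recursions agree at each step.

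For base cases I would verify $n=1$ (and likely $n=2$) directly: here the rational knot is a $(2,b_1)$-torus link and the path poset is a chain, so both $P$ and the specialized $F$ can be computed in closed form; the asymmetric boundary rule for $y_1$ (depending on $\sgn b_1$) is arranged exactly so that the two sign-cases of the base assemble into a uniform inductive statement. For the inductive step, on the knot side, resolving two crossings of the last twist region via the skein relation (in the normalization of footnote~\ref{fnote:HOMFLY-kinds}) yields a two-term relation expressing $P[b_1,\ldots,b_n]$ in terms of $P[b_1,\ldots,b_n \mp 2]$ and $P[b_1,\ldots,b_{n-1}]$ (after the standard re-identification of continued fractions), with scalar coefficients that depend on $\sgn b_n$. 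On the combinatorial side, the path poset of $[b_1,\ldots,b_n]$ extends that of $[b_1,\ldots,b_{n-1}]$ by a chain of $|b_n|$ new elements, and the order ideals decompose according to a cutoff in this new chain; this produces a linear recursion for $F[b_1,\ldots,b_n]$ whose coefficients are geometric partial sums in $y_{n-1}$ and $y_n$. Under the prescribed specialization these partial sums telescope into closed-form expressions involving $(1 - l^2 q)/(1 - q\inv)$, matching exactly the factor appearing in $m$.

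The main obstacle will be the case analysis required to align the two recursions after applying $m$ and the specialization. The parity of $n$ controls whether $y_n$ specializes to $-l^2 q$ or $-q\inv$, the sign of $b_n$ controls which skein resolution is used, and the inductive definition of $m$ must be checked to produce exactly the scalar needed in each of the resulting four sub-cases. The exponent $e_3 \in \{-1,0,1\}$ in the closed form for $m$ should be interpreted as tracking how many "unbalanced" telescoping factors of $(1-l^2q)/(1-q\inv)$ remain after stage $n$, and its evolution under the recursion is the technical heart of the argument. Once this bookkeeping is verified in each case, the inductive step closes and the theorem follows.
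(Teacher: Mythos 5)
Your outline matches the paper's strategy in its essentials: induction on $n$, parallel recursions for $P$ and for the specialized $F$-polynomial with $m[b_1,\ldots,b_n]$ defined inductively so as to reconcile the two, a direct computation for the base case $n=1$ (where the asymmetric rule for $y_1$ and the two-component unlink value $(l-l^{-1})/(q^{1/2}-q^{-1/2})$ enter), and a closing case analysis on signs. The one substantive difference is in the inductive step: you propose resolving two crossings of the last twist region at a time, giving a recursion relating $P[b_1,\ldots,b_n]$ to $P[b_1,\ldots,b_n\mp2]$ and $P[b_1,\ldots,b_{n-1}]$, whereas the paper imports the closed-form twist-region recursion of \cite{duzhin_formula_2015} (Lemma~\ref{lem:homfly-rec}), which collapses the entire last twist region in one step and expresses $P_0$ in terms of $P_1$ and $P_2$, reserving the two-crossing resolution (Lemma~\ref{lem:homfly-rec-1}) for the nested induction on $|b_1|$ in the $n=1$ base case. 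Your version can be made to work but forces a nested induction on $|b_n|$ at every stage of the outer induction; the paper's choice has the advantage of matching the $F$-polynomial recursion (Lemma~\ref{lem:even-cf-rec}), which likewise drops the whole last segment $S_n$ at once, so the two sides align term by term in each case of $\type[b_1,\ldots,b_n]$ (eight cases rather than your four, since the types $(1,\pm1)$, where the $y_1$-factor $w^{\mp1}$ enters, must be treated separately from the longer types). One point where your writeup needs repair: the specialization is indexed by the labels $1,\ldots,\ell_n-1$ of the poset elements (equivalently, the crossings), not by the position $n$ in the continued fraction, so it is not ``the parity of $n$'' that decides between $-l^2q$ and $-q^{-1}$ but the parity of the labels $\lambda_n(j)$ along the new chain $S_n$, and the coefficients of the combinatorial recursion are sums of products of the $y_i$ for $i$ ranging over $S_n$, not polynomials in $y_{n-1}$ and $y_n$. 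That these labels alternate in parity, starting and ending odd, is Remark~\ref{rmk:Sn-labels}; it holds precisely because every partial sum $\ell_i$ is even for an even continued fraction, and it is what makes $\tilde{F}(S_n)$ collapse to $(1-q^{-1})\big[|b_n|/2\big]_{l^2}$ (Lemma~\ref{lem:Sn-Fpoly}). Without that observation the geometric-sum telescoping you describe does not go through, so this parity bookkeeping, rather than the evolution of the exponent $e_3$, is where the chosen labeling of Definition~\ref{def:path_poset_cf} does its real work.
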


If we substitute $t\inv$ for $l$ and $t$ for $q$, we get exactly the specialization presented in \cite{lee_cluster_2019}. Indeed, the Jones polynomial $V(L)$ is equal to $P(L)|_{l = t\inv, q = t}$ \cite{jones_hecke_1987}. Furthermore, if we apply this substitution to $m[b_1,\ldots,b_n]$ 
we get exactly the leading term of the Jones polynomial as computed in \cite{lee_cluster_2019}; see Section~\ref{ssec:homfly_formulae}. For the reader that is familiar with the work of \cite{lee_cluster_2019} or \cite{nagai_cluster_2018}, it is important to note that our result is dependent on the specific way we label a path poset (equivalently, snake graph) and demand a labeled path poset (or labeled snake graph) be realized as a cluster variable; see Definitions~\ref{def:path_poset_cf} and \ref{def:realize}. This difference is also explicitly discussed before Remark~\ref{rmk:Sn-labels} and Lemma~\ref{lem:Sn-Fpoly}.

Since the HOMFLY polynomial also generalizes the Alexander polynomial $\Delta(L) \in \bbZ[t^{\pm1/2}]$ by substituting $1$ for $l$ and $t$ for $q$ \cite{jones_hecke_1987}, as a corollary we get a statement analogous to that in \cite{nagai_cluster_2018} by applying this substitution. We write $\Delta[b_1,\ldots,b_n]$ to refer to the Alexander polynomial of the rational knot corresponding to the continued fraction $[b_1,\ldots,b_n]$.

\begin{corollary}\label{cor:C}
For any even continued fraction $[b_1,\ldots,b_n]$,
\[ \Delta[b_1,\ldots,b_n] = \sgn(c_0)\,t^{e_0} \tilde{F}[b_1,\ldots,b_n] \]
where $c_0t^{e_0}$ is the leading term of $\Delta[b_1,\ldots,b_n]$ and $\tilde{F}[b_1,\ldots,b_n]$ is the $F$-polynomial $F[b_1,\ldots,b_n]$ subject to the specialization $y_i = -t^{(-1)^i}$.
\end{corollary}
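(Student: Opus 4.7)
The plan is to derive Corollary~\ref{cor:C} as a direct specialization of Theorem~\ref{thm:B}. Since $\Delta(L) = P(L)|_{l=1,\,q=t}$, as recalled just before the corollary, substituting $l = 1$ and $q = t$ in the identity $P = m \cdot \tilde{F}$ turns the left-hand side into $\Delta$. The proof therefore reduces to two verifications: that the $y$-specialization of Theorem~\ref{thm:B} becomes $y_i = -t^{(-1)^i}$ after the substitution, and that the prefactor $m[b_1,\ldots,b_n]|_{l=1,\,q=t}$ matches $\sgn(c_0)\, t^{e_0}$, where $c_0\, t^{e_0}$ is the leading term of $\Delta[b_1,\ldots,b_n]$.

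The first verification is a straightforward computation. The key identity is
\[
\left.\frac{1-l^2 q}{1-q^{-1}}\right|_{l=1,\,q=t} \;=\; \frac{1-t}{1-t^{-1}} \;=\; -t.
\]
From this, $y_{2i} = -l^2 q$ and $y_{2i+1} = -q^{-1}$ specialize to $-t$ and $-t^{-1}$, matching $-t^{(-1)^i}$ for $i \geq 1$. For $y_1$, the $b_1 > 0$ branch gives $1 \cdot (-t)^{-1} = -t^{-1}$ and the $b_1 < 0$ branch gives $t^{-2} \cdot (-t) = -t^{-1}$, so both reduce to $-t^{(-1)^1}$ as required.

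For the second verification, the structural form $m = c_0\, l^{e_1}\, q^{e_2}\, ((1-l^2 q)/(1-q^{-1}))^{e_3}$ recalled before Theorem~\ref{thm:B} specializes to
\[
m|_{l=1,\,q=t} \;=\; c_0\, (-1)^{e_3}\, t^{\, e_2 + e_3},
\]
a monomial in $t$ with coefficient $\pm 1$. Writing this monomial as $\varepsilon\, t^{e_0}$, Theorem~\ref{thm:B} becomes $\Delta = \varepsilon\, t^{e_0}\, \tilde{F}$. To identify $\varepsilon\, t^{e_0}$ with the leading term of $\Delta$, one uses the standard fact that any $F$-polynomial has constant term $1$ together with a check that the extremal-in-$t$ monomial of the specialized $\tilde{F}|_{y_i = -t^{(-1)^i}}$ has coefficient $1$; granting this, the leading term of $\Delta$ is precisely $\varepsilon\, t^{e_0}$, which in the notation of the corollary is $c_0\, t^{e_0}$, forcing $\varepsilon = \sgn(c_0)$. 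I expect the main work to be this last identification of leading terms, which requires inspecting how the monomials of $F[b_1,\ldots,b_n]$ interact with the $t$-valuation induced by the specialization; the rest is pure arithmetic.
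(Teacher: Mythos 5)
Your first verification is exactly the computation the paper performs: $w|_{l=1,\,q=t}=-t$, both branches of $\varphi_P(y_1)$ collapse to $-t^{-1}$, and $\varphi_P(y_{2i}),\varphi_P(y_{2i+1})$ become $-t,-t^{-1}$, so the specialization of Theorem~\ref{thm:B} becomes $y_i=-t^{(-1)^i}$. That part is correct and identical in approach.

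The gap is in your second verification, which is where the actual content of the corollary lives. You correctly reduce to showing that the signed monomial $m[b_1,\ldots,b_n]|_{l=1,\,q=t}=c_0(-1)^{e_3}t^{e_2+e_3}$ is the leading term of $\Delta[b_1,\ldots,b_n]$, but the route you propose for this is both unexecuted (``granting this'') and insufficient as stated. Knowing that the extremal-in-$t$ monomial of $\tilde F|_{y_i=-t^{(-1)^i}}$ has coefficient $1$ would only pin down the leading \emph{coefficient} of $\Delta$; to conclude that the leading term is $\varepsilon\,t^{e_2+e_3}$ itself you need the stronger claim that the empty order ideal's contribution $1=t^0$ is the term of maximal $t$-degree in the specialized $F$-polynomial, i.e.\ that every nonempty order ideal $I$ of $Q[b_1,\ldots,b_n]$ satisfies $\#\{\text{even }i\in I\}\leq\#\{\text{odd }i\in I\}$, together with control of possible cancellation among the degree-$0$ contributions. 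That is a genuine combinatorial assertion about the labeled path posets of even continued fractions, and nothing in your proposal establishes it. The paper sidesteps this entirely: it proves the equivalent statement that $m|_{l=1,\,q=t}$ has exactly the degree and sign of the leading term of $\Delta$ by induction on the continued fraction, applying the $l=1,q=t$ substitution to Corollary~\ref{cor:homfly-unknot} and the skein recursion of Lemma~\ref{lem:homfly-rec} and tracking degrees in parallel with Definition~\ref{def:m}, following the template of Section~5 of Lee--Schiffler. Your route could probably be made to work, but as written it defers precisely the step that carries the weight of the proof, and the deferred step is not the one-line check you suggest.
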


We remark that while the specialization presented in \cite{nagai_cluster_2018} takes five cases to define and is dependent on the particular continued fraction expansion, ours takes only one and works in all cases. 

\subsection{Organization}

In Section~\ref{sec:bkg} we review relevant background on continued fractions, rational knots, and the HOMFLY polynomial. In Section~\ref{sec:pps} we introduce path posets, and in Section~\ref{sec:cas} show how they relate to cluster algebras and the $F$-polynomial. Finally, in Section~\ref{sec:homfly_spec}, we prove Theorem~\ref{thm:B} and Corollary~\ref{cor:C}.

\section{Background}\label{sec:bkg}

In this section we introduce in more detail continued fractions, rational knots, and the HOMFLY, Jones, and Alexander polynomials. This section then concludes with a brief discussion of the HOMFLY polynomial of rational knots.

\subsection{Continued Fractions}

Define the set
\[ \bbQ^\infty := \left\{ \frac{p}{q} : p \in \bbZ_{\geq 0}, q \in \bbZ \text{\;\;s.t.\;} \gcd(p,q) = 1
\right\}
= \bbQ \cup \left\{1/0\right\}
\]
where $\infty := 1/0$ satisfies $x + \infty = \infty$ for all $x \in \bbQ^\infty$, 
as in \cite{duzhin_formula_2015} or \cite{kauffman_classification_2003}.

For any $p/q \in \bbQ^\infty$, a \textbf{continued fraction expansion} for $p/q$, or just a \textbf{continued fraction}, is a nonempty list $c_1,\ldots,c_n$ of integers such that
\[ [c_1,\ldots,c_n] := c_1 + \frac{1}{c_2 + \frac{1}{\cdots + \frac{1}{c_n}}} = \frac{p}{q}. \]

Following the notation used in \cite{lee_cluster_2019}, we extend the definition of a continued fraction to include the degenerate $n = 0$ and $n = -1$ cases as follows:
\begin{equation*}
[\,] := \infty \quad(n = 0) \quad\quad\quad\quad [c_1,\ldots,c_{-1}] := 0\quad(n = -1).
\end{equation*}

A continued fraction $[a_1,\ldots,a_n]$ is \textbf{positive} if each $a_i$ is strictly positive, and a continued fraction $[b_1,\ldots,b_n]$ is \textbf{even} if each $b_i$ is even and nonzero. 
The following proposition characterizes when such expansions exist.

\begin{proposition}[\cite{hardy_introduction_1979},\cite{lee_cluster_2019}]\label{prop:pos-even}
Suppose $n \geq 0$ and $p/q \in \bbQ^\infty$.\begin{enumerate}[(a)]
\item\label{item:pos} There is a positive continued fraction $[a_1,\ldots,a_n] = p/q$ if and only if \mbox{$p/q \geq 1$}. This expansion is unique up to the following equality.
\begin{equation}\label{eqn:last_1}
[c_1,\ldots,c_n,1] = c_1 + \frac{1}{\cdots + \frac{1}{c_{n} + \frac{1}{1}}} = c_1 + \frac{1}{\cdots + \frac{1}{c_{n} + 1}} = [c_1,\ldots,c_n+1]
\end{equation}
\item\label{item:even} There is an even continued fraction $[b_1,\ldots,b_n] = p/q$ if and only if \mbox{$|p/q| \geq 1$} and either $p$ or $q$ is even. This expansion is always unique.
\end{enumerate}
\end{proposition}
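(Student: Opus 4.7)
The plan for both parts is induction on $n$, driven by a Euclidean-type algorithm that strips off one entry of the continued fraction at each step: existence is obtained by running the algorithm, while uniqueness follows from showing that the algorithm's choice at each step is essentially forced.

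For part (\ref{item:pos}), I would use ordinary integer division: given $p/q \geq 1$ with $q > 0$, set $a_1 = \lfloor p/q \rfloor \geq 1$ and write $p = a_1 q + r$ with $0 \leq r < q$. If $r = 0$ the expansion terminates at $[a_1]$; otherwise $q/r > 1$ has strictly smaller denominator and numerator coprime to it, so by induction it has a positive expansion $[a_2, \ldots, a_n]$, giving $p/q = [a_1, \ldots, a_n]$. The converse direction is clear because any positive expansion has value at least $a_1 \geq 1$. For uniqueness, the key observation is that $[a_2, \ldots, a_n] \geq 1$ with equality iff the tail is the singleton $[1]$, so $a_1 = \lfloor p/q \rfloor$ is forced except precisely when $a_n = 1$ and $n > 1$ --- the ambiguity encoded by (\ref{eqn:last_1}).

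For part (\ref{item:even}), I would replace ordinary division with \emph{nearest-even} division: given $|p/q| \geq 1$ with $q > 0$ and exactly one of $p, q$ even, choose the unique even integer $b_1$ satisfying $|p - b_1 q| < q$, set $r = p - b_1 q$, and recurse on $q/r$. A short check confirms $\gcd(q, r) = 1$, $|q/r| > 1$, and (since $b_1$ is even) $r \equiv p \pmod{2}$, so exactly one of $q$ and $r$ is again even; thus all hypotheses are preserved and the algorithm terminates because $|r| < |q|$ strictly decreases until $r = 0$. Necessity of the conditions $|p/q| \geq 1$ and the parity constraint, as well as uniqueness of the expansion, then propagate through the inductive formula $[b_1, \ldots, b_n] = (b_1 p' + q')/p'$ for $[b_2, \ldots, b_n] = p'/q'$, since $b_1$ is determined at every step.

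The main subtlety is justifying the nearest-even choice. An even integer $b_1$ with $|p - b_1 q| < q$ exists and is unique unless $p/q$ is itself an odd integer, in which case the interval $(p/q - 1,\, p/q + 1)$ of allowed real values for $b_1$ has its two even integers on the boundary rather than strictly inside. But $p/q$ an odd integer forces $q = 1$ with $p$ odd, violating the parity hypothesis, so this obstruction cannot arise at any step of the recursion. Once this edge case is dispatched, the rest of the argument is routine bookkeeping of parities and signs.
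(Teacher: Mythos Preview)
The paper does not give its own proof of this proposition; it is stated with citations to \cite{hardy_introduction_1979} and \cite{lee_cluster_2019} and then used without further justification. Your proposal supplies a correct, self-contained argument along the classical lines one would find in those references: the Euclidean algorithm for part~(\ref{item:pos}) and nearest-even division for part~(\ref{item:even}), with the odd-integer obstruction in the latter correctly dispatched via the parity hypothesis. There is nothing in the paper itself to compare against, so your proof simply fills a gap the authors chose to cite away.
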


The map $p/q \mapsto p/(q-\sgn(q)\,p)$, found in both \cite{duzhin_formula_2015} and \cite{murasugi_knot_2007}, distinguishes both positive and even continued fraction expansions in the following way.

\begin{remark}\label{rmk:invol}
As a consequence of Proposition~\ref{prop:pos-even}, for any $p/q \in \bbQ$ s.t.\;$|p/q| > 1$, \[ \begin{gathered}
p/q \text{ has a positive}\\\text{C.F. expansion}
\end{gathered}
\enspace\Longleftrightarrow\enspace
\begin{gathered}
p/(q - \sgn(q)\,p) \text{ does not have a}\\\text{positive C.F. expansion.}
\end{gathered} \]
The same holds replacing ``positive'' with ``even'' in the above.
\end{remark}

There are a few quantities defined on continued fractions which we will use often in this paper. These can be found in \cite{lee_cluster_2019}, \cite{canakci_cluster_2018}, and \cite{rabideau_f-polynomial_2018} (among others) defined only for positive or even continued fractions. We extend them here to all continued fractions.
\begin{definition}\label{def:cf_quans}
For any continued fraction $[c_1,\ldots,c_n]$, we define
\begin{align*}
l_0 &:= 0 & l_1 &:= |c_1| & l_2 &:= |c_1| + |c_2| & &\cdots & l_n &:= |c_1| + |c_2| + \cdots + |c_n|,
\end{align*}
the \textbf{type sequence}
\begin{align*}
t_0 &:= -1 & t_1 &:= \sgn(c_1) & t_2 &:= -\sgn(c_2) & &\cdots & t_n &:= (-1)^{n-1} \sgn(c_n),
\end{align*}
written $\type[c_1,\ldots,c_n] := (t_1,\ldots,t_n)$, and the \textbf{sign sequence}
\begin{align*}
\sgn[c_1,\ldots,c_n] := (\underbrace{t_1,\ldots,t_1}_{|c_1|},\underbrace{t_2,\ldots,t_2}_{|c_2|},\ldots,\underbrace{t_n,\ldots,t_n}_{|c_n|}).
\end{align*}
\end{definition}

Given a continued fraction with $\ell_n \geq 2$, removing the first and last terms from $\sgn[c_1,\ldots,c_n] = (s_1,\ldots,s_{\ell_n})$ gives what we will call the \textbf{inner sign sequence}
\[ \isgn[c_1,\ldots,c_n] := (s_2,\ldots,s_{l_n-1}) \in \{1,-1\}^{l_n-2}. \]
The following proposition, which is based on results in \cite{canakci_cluster_2018}, uses the inner sign sequence to establish a key way of understanding positive continued fractions.

\begin{proposition}\label{prop:seqs-pos-cf}
The inner sign sequence defines a bijection between positive continued fractions with $\ell_n \geq 2$ and sequences in $\{1,-1\}^{\ell_n-2}$. 
As a result, there is a bijection:
\[ \left\{ p/q \in \bbQ : p/q > 1 \right\} \xleftrightarrow{\hphantom{aa}} \bigcup_{k \geq 0}\; \{1,-1\}^k. \]
\end{proposition}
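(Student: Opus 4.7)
The plan is to construct an explicit two-sided inverse to the inner sign sequence map. Given any binary sequence $(s_2, \ldots, s_{\ell-1}) \in \{1,-1\}^{\ell-2}$ with $\ell \geq 2$, I would extend it to a length-$\ell$ sequence by prepending $s_1 := +1$ and appending some choice of $s_\ell \in \{1,-1\}$, then group the result into maximal constant-sign blocks and read the block lengths left to right as a list $[c_1, \ldots, c_n]$. Because $s_1 = +1$, the block signs alternate starting with $+1$, so $[c_1,\ldots,c_n]$ is a positive continued fraction with type sequence $t_i = (-1)^{i-1}$ whose sign sequence is exactly the chosen extension; in particular its inner sign sequence recovers $(s_2,\ldots,s_{\ell-1})$.

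The first thing to check is that the choice of $s_\ell$ is immaterial modulo the equality~\eqref{eqn:last_1}. When $s_\ell = s_{\ell-1}$, the last block absorbs $s_\ell$ and the output is a CF whose last term is $\geq 2$; when $s_\ell \neq s_{\ell-1}$, the new symbol forms a singleton block and the output is a CF ending in $1$. A direct inspection shows these two outputs are precisely the two sides of~\eqref{eqn:last_1}. In the other direction, given a positive CF $[c_1,\ldots,c_n]$ with $\ell_n \geq 2$, the same casework verifies that $[c_1,\ldots,c_n,1]$ and $[c_1,\ldots,c_n+1]$ yield the same inner sign sequence, so the map descends to the quotient; combined with the reconstruction above, this establishes the first bijection.

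For the second bijection I would invoke part~(a) of Proposition~\ref{prop:pos-even}, which identifies $\{p/q \in \bbQ : p/q \geq 1\}$ with equivalence classes of positive continued fractions under~\eqref{eqn:last_1}. Since $[1]$ is the unique representative with $\ell_n = 1$ and it corresponds to $p/q = 1$, restricting to $\ell_n \geq 2$ matches exactly $\{p/q \in \bbQ : p/q > 1\}$. Composing with the first bijection then yields the claimed correspondence $\{p/q > 1\} \longleftrightarrow \bigcup_{\ell \geq 2}\{1,-1\}^{\ell - 2} = \bigcup_{k \geq 0}\{1,-1\}^k$.

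The main obstacle is the well-definedness under~\eqref{eqn:last_1}: both halves of the casework (the two possible extensions in the reconstruction, and the two representatives of a class in the invariance check) must line up exactly with the two sides of that identity. Everything else is routine bookkeeping from the definitions of type and sign sequence, and an analogous bijection for snake graphs is proved in~\cite{canakci_cluster_2018}, which I would cite as supporting context.
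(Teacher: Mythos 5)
Your proposal is correct and follows essentially the same route as the paper: the paper's (sketched) proof rests on exactly the two observations you make explicit — that the first term of the sign sequence of a positive continued fraction is always $+1$ (so it can be restored canonically), and that the last term is the only one affected by the ambiguity $[c_1,\ldots,c_n,1]=[c_1,\ldots,c_n+1]$ — together with the fact that only $1$ and $\infty$ have positive expansions with $\ell_n<2$. You simply write out in full the block-decomposition argument that the paper delegates to \cite{canakci_cluster_2018}.
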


\begin{proof}[Sketch of Proof]
The first bijection follows from arguments in \cite{canakci_cluster_2018} and the observations that first term of the sign sequence is always $1$ and the last term is the only term changed by \eqref{eqn:last_1}. The second bijection follows from the fact that only $1$ and $\infty$ have positive continued fraction expansions with $l_n < 2$.
\end{proof}

\subsection{Rational Knots and Links}\label{ssec:knots}

Briefly, a \textbf{link} is a smooth embedding of $n \geq 1$ disjoint copies of $S^1$ into $\bbR^3$, and a \textbf{knot} is a link with one component. Knots and links are considered up to \textbf{isotopy}, where links $L,L'$ are \textbf{isotopic} if there is an orientation-preserving diffeomorphism of $\bbR^3$ which takes the image of $L$ to $L'$. We often study a link by looking at \textbf{diagrams}, projections of the link onto $\bbR^2$ with over/under crossings marked. For more detail or a standard reference on the subject, see \cite{murasugi_knot_2007}, for example.

To every $p/q \in \bbQ^\infty$ we can associate in an algebraic way a link $C(p/q)$; see \cite{kauffman_classification_2003} or \cite{murasugi_knot_2007}. Of use to us is that these rational links $C(p/q)$ have the following presentation given a continued fraction expansion.

\begin{theorem}[\cite{murasugi_knot_2007} or \cite{kauffman_classification_2003}]\label{thm:rational}
For any continued fraction $[c_1,\ldots,c_n]$ with $n \geq 1$, the rational link $C\big([c_1,\ldots,c_n]\big)$ has the diagram
\begin{align*}
&\begin{gathered}\begin{tikzpicture}[yscale=0.6667,xscale=0.8]
\tbleft
\tbbotbox{1}{t_1|c_1|}
\tbtopbox{3}{t_2|c_2|}
\tbdots{5}
\tbbotbox{7}{t_n|c_n|}
\tboddend{9}
\end{tikzpicture}\end{gathered} &&\text{if $n$ odd }\; \\
&\begin{gathered}\begin{tikzpicture}[yscale=0.6667,xscale=0.8]
\tbleft
\tbbotbox{1}{t_1|c_1|}
\tbtopbox{3}{t_2|c_2|}
\tbdots{5}
\tbtopbox{7}{t_n|c_n|}
\tbevenend{9}
\end{tikzpicture}\end{gathered} &&\text{if $n$ even }\;
\end{align*}
where for any $k > 0$, we define the tangles
\begin{gather*}
\begin{gathered}\begin{tikzpicture}[yscale=0.6667,xscale=0.8]
\node at (0,1) {\includegraphics{graphics/2b0}};
\node at (1,1) {\includegraphics{graphics/2b0}};
\node[block,from={-0.35,0.3 to 0.58+0.35,1.2}] {$k$};
\node at (2,1) {$=$};
\node at (3,1) {\includegraphics{graphics/2b+}};
\node at (4,1) {$\cdots$};
\node at (5,1) {\includegraphics{graphics/2b+}};
\node at (4,0-0.75) {\small $k$ crossings};
\draw[decorate, decoration={calligraphic brace,amplitude=5pt}, line width=1pt]
    ( $ (6-0.5,0) $ ) -- ( $ (2+0.5,0) $ );
\end{tikzpicture}\end{gathered}
\quad\quad\quad\quad
\begin{gathered}\begin{tikzpicture}[yscale=0.6667,xscale=0.8]
\node at (0,1) {\includegraphics{graphics/2b0}};
\node at (1,1) {\includegraphics{graphics/2b0}};
\node[block,from={-0.35,0.3 to 0.58+0.35,1.2}] {$-k$};
\node at (2,1) {$=$};
\node at (3,1) {\includegraphics{graphics/2b-}};
\node at (4,1) {$\cdots$};
\node at (5,1) {\includegraphics{graphics/2b-}};
\node at (4,0-0.75) {\small $k$ crossings};
\draw[decorate, decoration={calligraphic brace,amplitude=5pt}, line width=1pt]
    ( $ (6-0.5,0) $ ) -- ( $ (2+0.5,0) $ );
\end{tikzpicture}\end{gathered} \\
\begin{gathered}\begin{tikzpicture}[yscale=0.6667,xscale=0.8]
\node at (0,1) {\includegraphics{graphics/2b0}};
\node at (1,1) {\includegraphics{graphics/2b0}};
\node[block,from={-0.35,0.3 to 0.58+0.35,1.2}] {$0$};
\node at (2,1) {$=$};
\node at (3,1) {\includegraphics{graphics/2b0}};
\end{tikzpicture}\end{gathered}
\end{gather*}
so that the diagram as defined has $\ell_n$ crossings.
\end{theorem}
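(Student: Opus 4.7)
The plan is to induct on $n$, building the rational tangle whose numerator closure yields $C([c_1,\ldots,c_n])$ via Conway's correspondence, as laid out in \cite{murasugi_knot_2007} and \cite{kauffman_classification_2003}.

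Recall that a rational tangle is assembled from the zero tangle by alternating two operations: a \emph{horizontal twist} (adding a $\pm 1$ crossing on the right, sending the Conway fraction $x \mapsto x \pm 1$) and a $90^\circ$ \emph{rotation} (sending $x \mapsto -1/x$). Iterating these operations in accordance with the recursive identity $[c_1, c_2, \ldots, c_n] = c_1 + 1/[c_2, \ldots, c_n]$ produces the tangle $T[c_1, \ldots, c_n]$. For the base case $n = 1$, the tangle $T[c_1]$ is a single block of $|c_1|$ crossings of uniform sign $t_1 = \sgn(c_1)$, and its numerator closure is exactly the odd-$n$ diagram in the theorem, with a bottom row of $t_1|c_1|$ twists and caps on both the left and right.

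For the inductive step, I would express $T[c_1, \ldots, c_n]$ as the result of adding $c_1$ horizontal twists to the rotation of $T[c_2, \ldots, c_n]$. Under the $90^\circ$ rotation, the inductively-obtained diagram of $T[c_2, \ldots, c_n]$ flips in two ways: (i) its bottom-row blocks become top-row blocks and vice versa, and (ii) the sign of each block negates, since a rotated $\pm$ crossing reads as a $\mp$ crossing with respect to the new tangle orientation. This sign reversal is precisely what produces the $(-1)^{i-1}$ factor in the definition of $t_i$. Prepending $c_1$ horizontal twists then adds a new bottom-row block of $t_1|c_1| = \sgn(c_1)|c_1|$ twists at the far left, and the left-cap closure from taking the numerator of the tangle completes the diagram. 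The total crossing count sums to $\sum_i |c_i| = \ell_n$.

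The main obstacle, which is largely bookkeeping, is verifying that the right-end closure alternates as claimed between the cap closure (odd $n$) and the single horizontal connection (even $n$). This parity dependence stems from the fact that exactly $n-1$ rotations are applied in the inductive construction; each rotation swaps the roles of the two right-hand endpoints of the tangle, so after an even number of rotations the two right endpoints of $T[c_1,\ldots,c_n]$ are on the same side of the diagram (yielding a cap-style closure), while after an odd number of rotations they are on opposite sides (yielding a horizontal-strand closure). Matching these tangle-theoretic closures to the pictured diagram conventions then completes the proof.
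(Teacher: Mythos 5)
The paper does not prove this statement: Theorem~\ref{thm:rational} is quoted as a known fact from \cite{murasugi_knot_2007} and \cite{kauffman_classification_2003}, so there is no internal proof to compare yours against. Your inductive construction --- realizing $T[c_1,\ldots,c_n]$ by rotating $T[c_2,\ldots,c_n]$ by $90^\circ$ and prepending $c_1$ horizontal twists, while tracking that rotation exchanges bottom-row and top-row blocks, negates crossing signs (whence $t_i=(-1)^{i-1}\sgn(c_i)$), and toggles the right-hand closure between the cap and the horizontal strand according to the parity of the number of rotations --- is precisely the standard argument in those references, and its outline is sound.

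One point needs more care than your sketch gives it. The $90^\circ$ rotation sends the tangle fraction $x$ to $-1/x$, not to $1/x$, so to realize $c_1+1/[c_2,\ldots,c_n]$ one must either compose the rotation with a mirror or absorb the extra sign into the handedness convention for the twists; this is not separate from the sign reversal you describe in (ii) --- it \emph{is} that reversal --- and it means your effects (i) and (ii) cannot be checked independently. The claim that the $i$-th block consists of $t_i|c_i|$ crossings of the pictured handedness only comes out correctly once a single crossing convention is fixed and propagated consistently through all $n-1$ rotations, and the closure-parity verification you defer is likewise where the actual content lies. None of this invalidates the approach; it just means the bookkeeping essentially is the proof, which is presumably why the paper cites the result rather than reproving it.
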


Some examples of rational links are shown in Figure~\ref{fig:link_exs}.

\begin{figure}[ht]
$\begin{gathered}\scalebox{0.95}{\begin{tikzpicture}[yscale=0.6667,xscale=0.8]
\tbleft{}
\tbbotpos{1}\tbbotpos{2}
\node at (1.5,0) {$2$};
\tbtopneg{3}\tbtopneg{4}\tbtopneg{5}
\node at (4,1) {$-3$};
\tbbotneg{6}\tbbotneg{7}\tbbotneg{8}\tbbotneg{9}
\node at (7.5,0) {$-4$};
\tbtopneg{10}\tbtopneg{11}
\node at (10.5,1) {$-2$};
\tbbotpos{12}\tbbotpos{13}\tbbotpos{14}
\node at (13,0) {$3$};
\tbtopneg{15}
\node at (15,1) {$-1$};
\tbevenend{16}
\end{tikzpicture}}\end{gathered}$
$\begin{gathered}\scalebox{0.95}{\begin{tikzpicture}[yscale=0.6667,xscale=0.8]
\tbleft{}
\tbbotpos{1}\tbbotpos{2}
\node at (1.5,0) {$2$};
\tbtopneg{3}\tbtopneg{4}
\node at (3.5,1) {$-2$};
\tbbotpos{5}
\node at (5,0) {$1$};
\tbtopneg{6}\tbtopneg{7}
\node at (6.5,1) {$-2$};
\tbbotpos{8}
\node at (8,0) {$1$};
\tbtopneg{9}
\node at (9,1) {$-1$};
\tbbotpos{10}\tbbotpos{11}\tbbotpos{12}\tbbotpos{13}
\node at (11.5,0) {$4$};
\tboddend{14}
\end{tikzpicture}}\end{gathered}$
\caption{The isotopic rational links $C\big([2,3,-4,2,3,1]\big)$ and $C\big([2,2,1,2,1,1,4]\big)$, with their constituent tangles labeled.}
\label{fig:link_exs}
\end{figure}
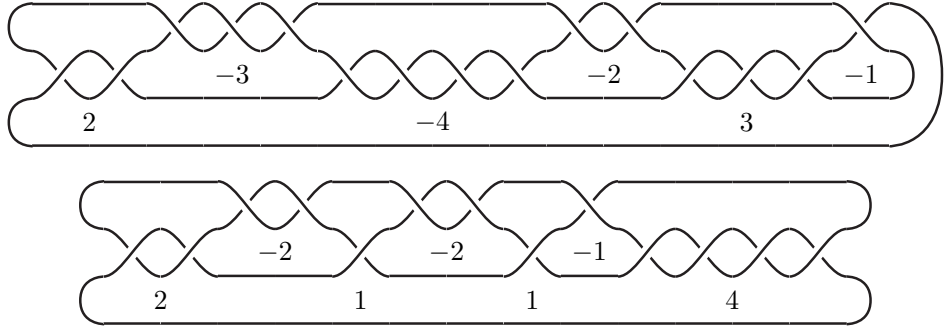

\begin{theorem}[{\cite{kauffman_classification_2003} or \cite{murasugi_knot_2007}}]\label{thm:rational-iso}
The rational links $C(p/q)$ and $C(p'/q')$ are isotopic if and only if 
$p' = p$ and either $q' \equiv q \,\bmod\, p$ or $q' \equiv q\inv \,\bmod\, p$.
\end{theorem}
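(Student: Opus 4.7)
The plan is to prove both directions of this classical statement (Schubert's classification of rational links) separately.

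For sufficiency, I would produce explicit isotopies realizing each allowed relation on $q$. The operation $q \mapsto q\inv \bmod p$ corresponds at the level of continued fractions to the reversal $[c_1,\ldots,c_n] \mapsto [c_n,\ldots,c_n]$ (a standard fact about continued fractions). Geometrically, this reversal amounts to reading the tangle diagram of Theorem~\ref{thm:rational} from right to left; after performing a $180^\circ$ rotation of the diagram in the plane and identifying the resulting link closure, one sees that $C(p/q) = C(p/q')$ in this case. For the relation $q' \equiv q \bmod p$, I would use the Conway calculus of rational tangles: adding a full twist to one end of the tangle (equivalently, inserting a $\pm 1$ summand at the head of the continued fraction) changes $p/q$ in a controlled way, and a careful accounting shows that changing $q$ by a multiple of $p$ produces a diagram that differs from the original only by isotopies that become trivial after closing the tangle into a link.

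For necessity, I would use the double branched cover. Let $\Sigma_2(L)$ denote the two-fold cyclic cover of $S^3$ branched along a link $L$. It is classical that $\Sigma_2\bigl(C(p/q)\bigr)$ is the lens space $L(p,q)$, essentially because the branched cover of a rational tangle is a solid torus and the continued fraction data records exactly the gluing data between the two pieces. Since isotopic links have homeomorphic branched covers, the problem reduces to the classification of lens spaces: $L(p,q) \cong L(p',q')$ if and only if $p' = p$ and $q' \equiv \pm q^{\pm 1} \bmod p$. The $\pm$ ambiguity on $q$ collapses to the two cases in the statement after noting $C(p/q) = C(-p/-q)$ and that we are working with unoriented links.

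The main obstacle is the necessity direction, which rests on two nontrivial ingredients: identifying $\Sigma_2(C(p/q))$ with $L(p,q)$ (a hands-on argument using the rational tangle decomposition) and the classification of lens spaces up to homeomorphism (classically due to Reidemeister, proved via Reidemeister torsion or, alternatively, via the linking form on $H_1$). An attempt to prove necessity using only polynomial invariants of the link seems unlikely to succeed, since the HOMFLY and Alexander polynomials depend only on $p/q$ as a rational number and do not by themselves see the residue class of $q$ mod $p$ in a way strong enough to rule out all spurious equivalences, so the topological route through branched covers appears essentially necessary.
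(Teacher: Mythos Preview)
The paper does not prove this theorem; it is quoted as classical background (Schubert's classification of two-bridge links) with references to Kauffman--Lambropoulou and Murasugi, and is invoked only to deduce Corollary~\ref{cor:rational-pos-even}. There is therefore no proof in the paper to compare your attempt against.

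Your outline is the standard route and is broadly correct, but a few points deserve tightening. First, a typo: the reversal should read $[c_1,\ldots,c_n]\mapsto[c_n,\ldots,c_1]$. Second, the ``$180^\circ$ rotation in the plane'' does not realize reversal of the continued fraction; the relevant symmetry is a half-turn about an axis lying in the projection plane (a flype-type move on the four-ended tangle), and some care is needed to see that the two closures coincide. Third, in the necessity direction your handling of signs is slightly muddled: an isotopy of links in $S^3$ induces an \emph{orientation-preserving} homeomorphism of branched double covers, and the oriented classification of lens spaces already gives $q'\equiv q^{\pm 1}\bmod p$ with no extra $\pm$ to collapse, so the appeal to $C(p/q)=C(-p/-q)$ is unnecessary (and in any case $-p/-q=p/q$ as elements of $\bbQ^\infty$). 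Finally, you are right that the heavy lifting sits in the lens-space classification; citing Reidemeister torsion or the linking form is appropriate, but be aware that this is a substantial input you are importing rather than proving.
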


The following consequence of Theorem~\ref{thm:rational-iso} and Remark~\ref{rmk:invol} 
shows that, up to isotopy, both positive and even continued fractions realize all rational links.

\begin{corollary}\label{cor:rational-pos-even}
For every $p/q \in \bbQ^\infty$, there exists a positive continued fraction $[a_1,\ldots,a_n]$ and an even continued fraction $[b_1,\ldots,b_n]$ such that $C(p/q)$ is isotopic to $C\big([a_1,\ldots,a_n]\big)$ and $C\big([b_1,\ldots,b_n]\big)$.
\end{corollary}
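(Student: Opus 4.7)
The plan is to combine Theorem~\ref{thm:rational-iso}, which reduces isotopy of rational links to congruences on $q$ modulo $p$, with Proposition~\ref{prop:pos-even}, which characterizes exactly when a positive or even CF of a given $p/q$ exists. So my strategy will be: given $p/q$, replace it by some isotopic $p/q'$ with $q' \equiv q \pmod{p}$, chosen so that $p/q'$ satisfies the hypothesis of Proposition~\ref{prop:pos-even}(a) or (b), and then invoke the corresponding existence statement to produce the CF.

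First I would dispatch the degenerate cases. For $p/q = \infty = 1/0$, the empty CF $[\,]$ is vacuously both positive and even, with value $\infty$. For $p/q = 0$, the convention $[c_1,\ldots,c_{-1}] := 0$ again vacuously gives a positive and even CF. For $p = 1$, all congruences modulo $1$ are trivial, so Theorem~\ref{thm:rational-iso} makes every link $C(1/q)$ isotopic to $C(1/0) = C(\infty)$, and the empty CF applies.

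In the main case $p \ge 2$, I will proceed as follows. For the positive CF, take $q' \in \{1,\ldots,p-1\}$ to be the unique representative of $q$ modulo $p$; it is nonzero since $\gcd(p,q) = 1$. Then $p/q' > 1$, so Proposition~\ref{prop:pos-even}(a) provides a positive CF for $p/q'$, and Theorem~\ref{thm:rational-iso} gives that $C(p/q)$ is isotopic to $C(p/q')$. For the even CF, if $p$ is even the same $q'$ works directly by Proposition~\ref{prop:pos-even}(b). If $p$ is odd I need $q'$ itself to be even; here the key observation is that the two representatives of $q \bmod p$ lying in $\{-(p-1),\ldots,-1\} \cup \{1,\ldots,p-1\}$ differ by $p$, so since $p$ is odd they have opposite parities. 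Choosing the even one yields $|p/q'| > 1$ with $q'$ even, so Proposition~\ref{prop:pos-even}(b) again applies.

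I do not anticipate a serious obstacle: once the two cited results are in hand the argument reduces to elementary modular arithmetic, and the only real subtlety is the parity step for odd $p$, which boils down to the fact that adding an odd number flips parity. As a side remark, Remark~\ref{rmk:invol} suggests a more conceptual route via the involution $p/q \mapsto p/(q - \sgn(q)p)$ in the range $|p/q| > 1$, but since this does not directly cover $|p/q| \le 1$, the direct mod-$p$ reduction above seems cleanest for a unified argument.
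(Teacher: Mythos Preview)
Your argument is correct and is essentially the approach the paper has in mind: the paper does not give a written-out proof but simply cites Theorem~\ref{thm:rational-iso} together with Remark~\ref{rmk:invol}, and your mod-$p$ reduction is exactly this (the map $q' \mapsto q' - \sgn(q')p$ of Remark~\ref{rmk:invol} is precisely the passage between your two representatives in $\{-(p-1),\ldots,-1\}\cup\{1,\ldots,p-1\}$). Your handling of the degenerate cases $p \le 1$ via the empty and length-$(-1)$ continued fractions is fine and fills in details the paper leaves implicit.
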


This paper will be concerned with \textbf{oriented} links (see \cite{murasugi_knot_2007}), where to a crossing in an oriented link diagram we associate a sign in the following way.
\begin{align*}
\begin{gathered}
\includegraphicsinline{graphics/c+} \\ 1 \text{ (`positive')}
\end{gathered} &&
\begin{gathered}
\includegraphicsinline{graphics/c-} \\ -1 \text{ (`negative')}
\end{gathered}
\end{align*}

The following result, which can also be found in \cite{duzhin_formula_2015}, establishes that there is a well-behaved canonical way to orient a rational link of an even continued fraction. In general, orientations of rational links need not be this nice; see \cite{nagai_cluster_2018}, for example.
\begin{proposition}[\cite{lee_cluster_2019}]\label{prop:rational-ori}
For any even continued fraction $[b_1,\ldots,b_n]$, there exists an orientation of $C\big([b_1,\ldots,b_n]\big)$ such that the sequence of signs of the crossings of $C\big([b_1,\ldots,b_n]\big)$, read left to right, is exactly the sign sequence $\sgn[b_1,\ldots,b_n]$. In particular, the $i^{\text{th}}$ tangle has the following orientation, irrespective of over/under crossings.
\begin{align*}
\begin{gathered}
\begin{gathered}\begin{tikzpicture}[yscale=0.6667,xscale=0.8]
\node at (0-0.125,1) {\includegraphics{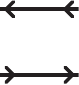}};
\node at (1+0.08,1) {\includegraphics{graphics/2b0_ori1}};
\node[block,from={-0.35,0.3 to 0.58+0.35,1.2}] {$t_i|b_i|$};
\node at (2+0.08,1) {$=$};
\node at (3+0.08,1) {\includegraphics{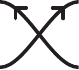}};
\node at (4+0.08,1) {$\cdots$};
\node at (5+0.08,1) {\rotatebox{180}{\includegraphics{graphics/2bnone}}};
\end{tikzpicture}\end{gathered} \\ \text{if $i$ odd}
\end{gathered} &&
\begin{gathered}
\begin{gathered}\begin{tikzpicture}[yscale=0.6667,xscale=0.8]
\node at (0-0.125,1) {\includegraphics{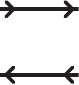}};
\node at (1+0.08,1) {\includegraphics{graphics/2b0_ori2}};
\node[block,from={-0.35,0.3 to 0.58+0.35,1.2}] {$t_i|b_i|$};
\node at (2+0.08,1) {$=$};
\node at (3+0.08,1) {\rotatebox{180}{\includegraphics{graphics/2bnone}}};
\node at (4+0.08,1) {$\cdots$};
\node at (5+0.08,1) {\includegraphics{graphics/2bnone}};
\end{tikzpicture}\end{gathered} \\ \text{if $i$ even}
\end{gathered}
\end{align*}
\end{proposition}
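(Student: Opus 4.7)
The plan is to construct the orientation incrementally from left to right along the diagram of Theorem~\ref{thm:rational}, then verify two things at each tangle $i$: (i) that the strand orientations at the vertical interfaces between tangles are consistent with the local orientation prescribed in the proposition, and (ii) that every crossing inside tangle $i$ receives the sign $t_i$. First I would orient the two arcs emerging from the leftmost cap so that they match the prescribed orientation for the tangle $i = 1$, and then propagate these orientations rightward, one tangle at a time.

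The key input that makes this work is that every $|b_i|$ is even. A block of $|b_i|$ half-twists between two strands induces a permutation of the four endpoints of those strands; when $|b_i|$ is even, this permutation fixes each strand setwise, so the two orientations exiting on the right of tangle $i$ are identical to those entering on the left. Together with the alternation between bottom tangles (odd $i$) and top tangles (even $i$), an induction on $i$ shows that the orientations at the $i$th interface are exactly those required by the pictures in the statement. The right-hand closure — an end-cap when $n$ is odd and a straight-across closure when $n$ is even — is compatible with the orientation inherited from tangle $n$ for the same reason.

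With the orientations established, the sign computation is a direct check from the local pictures. For an odd-indexed tangle, both strands carry the same orientation through the twist region, so a positive half-twist produces a positive crossing and a negative half-twist produces a negative crossing; each of the $|b_i|$ crossings therefore has sign $\sgn(b_i) = t_i$. For an even-indexed tangle the prescribed local orientation reverses one strand relative to the odd case, reversing the sign rule and giving each of the $|b_i|$ crossings the sign $-\sgn(b_i) = t_i$. Concatenating these contributions reads off exactly the sign sequence $\sgn[b_1,\ldots,b_n]$.

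The main obstacle is not any single step but the combinatorial bookkeeping: one must track, for each of the two kinds of tangle, how the two strands entering are paired with the two strands exiting, and verify that under the prescribed local orientation the crossing-sign convention stated in Section~\ref{ssec:knots} yields the claimed sign. Once the parity argument of the second paragraph is in hand, this reduces to a finite case analysis on a single tangle of each type, after which the inductive propagation and the conclusion follow immediately.
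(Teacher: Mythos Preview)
The paper does not give its own proof of this proposition; it is quoted directly from \cite{lee_cluster_2019} and used as a black box. So there is nothing in the present paper to compare your argument against.

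That said, your outline is the standard argument and is correct. The essential point is exactly the one you isolate: because each $|b_i|$ is even, the twist block in tangle $i$ returns each of the two participating strands to its original position (rather than swapping them), so the orientation pattern on the four horizontal strands is the same on the right side of tangle $i$ as on the left. This is what lets the induction on $i$ go through and what makes the right-hand closure consistent. The sign computation then really is a local check on one crossing of each of the two tangle types, using the convention displayed in Section~\ref{ssec:knots}. Your description of that check is slightly imprecise---whether the two strands in a twist region are parallel or antiparallel is determined by the specific boundary orientation pattern in the pictures, not by ``odd-indexed'' versus ``even-indexed'' per se---but once you draw the two local models with the prescribed arrows the signs $t_i = (-1)^{i-1}\sgn(b_i)$ fall out immediately. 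Nothing is missing from the strategy; the remaining work is exactly the finite case analysis you flag in your last paragraph.
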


See Figure~\ref{fig:rational-ori-ex} for an example.

\begin{figure}[ht]
\centering
\includegraphics{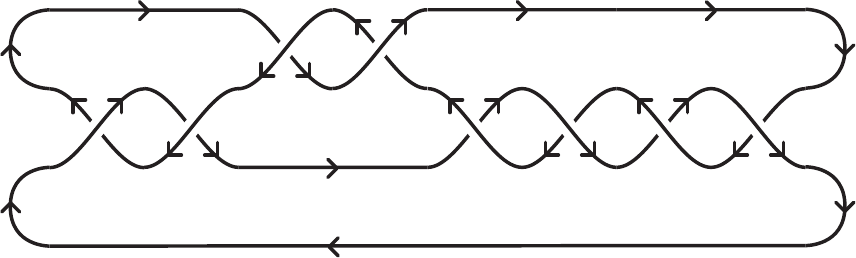}
\caption{The canonical orientation for $C\big([2,-2,-4]\big)$, where $\sgn[2,-2,-4] = (1,1,1,1,-1,-1,-1,-1)$.}
\label{fig:rational-ori-ex}
\end{figure}

\subsection{The HOMFLY Polynomial}\label{ssec:homfly_def}

The following theorem of Vaughan Jones will serve as our definition of the HOMFLY polynomial, though there are many other equivalent formulations. It uses the notion of a \textbf{skein relation}; see \cite{murasugi_knot_2007} for a general reference.

\begin{theorem}[\cite{jones_hecke_1987}\footnote{Relative to \cite{jones_hecke_1987}, we make the substitution $\lambda = l\inv q^{-1/2}$.}]\label{thm:homfly-def}
There exists an isotopy invariant $P$, called the \textbf{HOMFLY polynomial}, such that $P(L) \in \bbZ(l,q^{1/2})$ for any oriented link $L$, and which satisfies the following skein relation:
\[ l\,P\left(\includegraphicsinline{graphics/ol+.eps}\right) - l\inv P\left(\includegraphicsinline{graphics/ol-.eps}\right) = (q\half - q\nhalf)\, P\left(\includegraphicsinline{graphics/ol0.eps}\right) \]
as well as the equality $P(\unknot) = 1$ on the unknot.
\end{theorem}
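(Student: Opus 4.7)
The plan is to prove both uniqueness and existence of $P$, with existence carrying essentially all the content. For uniqueness, I would argue by a double induction, first on the number of crossings of a diagram $D$ and then on an auxiliary ``descending measure.'' Fix basepoints on each component of $L$ and a traversal order; call a crossing \emph{good} if the overstrand is traversed first, \emph{bad} otherwise. The skein relation lets me trade a bad crossing for (i) the same link with that crossing switched (strictly fewer bad crossings) plus (ii) a smoothing (strictly fewer crossings overall). The base case is a fully ascending diagram, which by a classical argument represents an unlink on some number $k$ of components. The value $P$ assigns to the $k$-component unlink is then determined by inserting a Reidemeister I kink on one component and resolving: one obtains
\[ P\bigl(U_k\bigr) = \left(\frac{l - l\inv}{q\half - q\nhalf}\right)^{\hspace{-2pt}k-1}, \]
so everything is forced.

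For existence, the cleanest route is via the Hecke algebra $H_n(q)$ together with Markov's theorem, following \cite{jones_hecke_1987}. I would recall Alexander's theorem (every oriented link is the closure $\widehat{\beta}$ of some braid $\beta \in B_n$) and Markov's theorem (two braids yield isotopic closures iff they are related by conjugation in $B_n$ and stabilization $B_n \hookrightarrow B_{n+1}$, $\beta \mapsto \beta\, \sigma_n^{\pm 1}$). Then I would construct the Ocneanu trace $\tau \colon \bigcup_n H_n(q) \to \bbZ(l, q\half)$ on the tower of Hecke algebras with generators $T_i$ satisfying the quadratic relation $T_i^2 = (q-1)T_i + q$ and the braid relations, using the free normalization parameter $\lambda = l\inv q\nhalf$ at each stabilization step. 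Composing the natural representation $B_n \to H_n(q)$, $\sigma_i \mapsto T_i$, with $\tau$ (after a suitable rescaling by a power of $\lambda$ depending on writhe) yields a candidate $P(\widehat{\beta})$.

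It then remains to verify three things in sequence: (a) invariance of the rescaled trace under braid conjugation, which is immediate from $\tau$ being a trace; (b) invariance under Markov stabilization, which reduces to the identity $\tau(xT_n) = \lambda\,\tau(x)$ for $x \in H_n(q)$ and the analogous identity for $T_n\inv$; and (c) the skein relation, which falls out by applying $\tau$ to the quadratic relation $T_i - T_i\inv = (q\half - q\nhalf)\cdot 1$ in the form obtained after multiplying by $q\nhalf$. Uniqueness from the previous paragraph, applied to the construction just described, confirms that the two agree, and normalizing so that $P(\unknot) = 1$ fixes the overall constant.

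The main obstacle is step (b), Markov invariance under stabilization. This is where the parameter $l$ (equivalently $\lambda$) genuinely enters, and verifying that a single free parameter suffices to absorb both the $\sigma_n$ and $\sigma_n\inv$ stabilizations --- as opposed to being overdetermined --- is the nontrivial content of Jones' construction. Once that lemma is in hand, the remaining pieces of the argument are routine.
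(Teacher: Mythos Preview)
The paper does not prove this theorem. It is stated with attribution to \cite{jones_hecke_1987} and explicitly used as a black-box definition (``The following theorem of Vaughan Jones will serve as our definition of the HOMFLY polynomial''). There is therefore no proof in the paper to compare your proposal against; your sketch is essentially the argument one finds in the cited reference, combining the Ocneanu trace construction for existence with the standard crossing-induction for uniqueness.

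One small technical correction to your step (c): the identity $T_i - T_i\inv = (q\half - q\nhalf)\cdot 1$ does not hold in $H_n(q)$. From $T_i^2 = (q-1)T_i + q$ one computes $T_i\inv = q\inv T_i - (1 - q\inv)$, hence $T_i - T_i\inv = (1-q\inv)(T_i + 1)$, which is not a scalar. The skein relation only emerges after the writhe-dependent rescaling of the trace (equivalently, after passing to normalized generators and absorbing the factor of $\lambda$); your parenthetical ``after multiplying by $q\nhalf$'' is pointing toward this, but the algebra as stated is not quite right. This is a bookkeeping slip rather than a structural gap in the strategy.
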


\begin{corollary}[\cite{jones_hecke_1987}]\label{cor:homfly-unknot}
If $L$ is isotopic to two disjoint copies of the unknot, then
\[ P(L) = \frac{l-l\inv}{q\half - q\nhalf}. \]
\end{corollary}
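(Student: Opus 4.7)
The plan is to apply the skein relation of Theorem~\ref{thm:homfly-def} exactly once, to a carefully chosen diagram in which the smoothed term is the two-component unlink while the two crossing terms are both the unknot.

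Concretely, consider a small disk containing two vertical strands, both oriented upward; this local picture is the $L_0$ of a skein triple, and replacing the strands by a positive (respectively negative) crossing produces $L_+$ (respectively $L_-$). Close off the diagram outside the disk by joining the two left endpoints via an arc on the left and the two right endpoints via an arc on the right. With these exterior connections, the global diagram of $L_0$ is a disjoint union of two planar unknots, hence isotopic to the given link $L$. Tracing through $L_+$ or $L_-$ instead, the single crossing fuses the two strands into one closed curve, producing a one-component diagram with exactly one crossing; any such diagram is isotopic to the unknot via a single Reidemeister~I move, and orientations remain coherent throughout, so $P(L_+) = P(L_-) = P(\unknot) = 1$.

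Substituting these values into the skein relation yields
\[ l - l\inv = (q\half - q\nhalf)\, P(L_0), \]
and dividing gives $P(L) = P(L_0) = (l - l\inv)/(q\half - q\nhalf)$, as claimed. The only subtlety to check is orientation compatibility: the two strands inside the skein disk must be parallel (both upward) so that the oriented smoothing producing $L_0$ respects the orientation of the two-component unlink, which is automatic for the exterior closure chosen above. This small bookkeeping step is the only potential obstacle, and it is routine.
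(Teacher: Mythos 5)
Your proof is correct and is exactly the standard skein-relation computation that the paper relies on by citing \cite{jones_hecke_1987} rather than proving the statement itself: taking $L_\pm$ to be one-crossing unknot diagrams and $L_0$ the two-component unlink gives $l - l\inv = (q\half - q\nhalf)P(L_0)$, as you write. Nothing further is needed.
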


The HOMFLY polynomial generalizes the Jones polynomial, as defined in \cite{lee_cluster_2019}.

\begin{theorem}[\cite{jones_hecke_1987}]\label{thm:homfly-to-jones}
For any oriented link $L$, the Jones polynomial satisfies \[ V(L) = P(L)|_{l = t\inv, q = t}. \]
\end{theorem}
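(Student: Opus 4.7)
The approach is to verify that the specialization $W(L) := P(L)|_{l = t\inv,\, q = t}$ satisfies the axioms characterizing the Jones polynomial, and then invoke uniqueness to conclude $W = V$. Recall that $V$ is characterized among isotopy invariants of oriented links by the two conditions $V(\unknot) = 1$ and the skein relation
\[ t\inv V(L_+) - t\,V(L_-) = (t\half - t\nhalf)\,V(L_0). \]

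The key steps are as follows. First, observe that $W$ inherits isotopy invariance from $P$ by Theorem~\ref{thm:homfly-def}, and $W(\unknot) = 1$ follows immediately from $P(\unknot) = 1$. Second, substitute $l = t\inv$ and $q = t$ directly into the HOMFLY skein relation
\[ l\,P(L_+) - l\inv P(L_-) = (q\half - q\nhalf)\,P(L_0) \]
to obtain
\[ t\inv W(L_+) - t\,W(L_-) = (t\half - t\nhalf)\,W(L_0), \]
which is precisely the Jones skein relation. Third, appeal to the uniqueness portion of Jones's characterization to conclude $W = V$.

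The only nontrivial step is the last: the uniqueness (and existence) of $V$ satisfying these axioms is itself a substantive classical result, usually proved by induction on the number of crossings. Any oriented link diagram can be reduced to a disjoint union of unknots via a finite sequence of crossing changes, and the skein relation computes the invariant along such a reduction; the delicate content is that the answer is independent of the order of reductions chosen and of the diagram representing $L$. Once this result from \cite{jones_hecke_1987} is taken as input, the proof of the theorem reduces to the routine change of variables carried out above, so I would not expect any further obstacle.
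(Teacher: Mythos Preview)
Your argument is correct and is the standard way to establish this fact: specialize the HOMFLY skein relation and normalization, observe that they become the Jones axioms, and invoke the uniqueness theorem for the Jones polynomial. There is nothing to add to the mathematics.

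However, note that the paper does not actually prove this statement; it simply cites it from \cite{jones_hecke_1987} as a known result, without giving any argument. So there is no ``paper's own proof'' to compare against. Your write-up is a perfectly good justification of the cited fact, but it goes beyond what the paper itself provides.
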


Furthermore, via a different substitution, we obtain another well-known polynomial which is distinct from the Jones. This polynomial was shown to have an $F$-polynomial specialization in \cite{nagai_cluster_2018}.

\begin{theorem}[\cite{jones_hecke_1987}]\label{thm:homfly-to-alex}
For any oriented link $L$, the {Alexander polynomial} satisfies \[ \Delta(L) = P(L)|_{l = 1, q = t}. \]
\end{theorem}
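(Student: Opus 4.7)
The plan is to show that the substitution defines the Alexander-Conway polynomial by verifying the two conditions that characterize it: the normalization $\Delta(\unknot) = 1$ and the Conway skein relation $\Delta(L_+) - \Delta(L_-) = (q\half - q\nhalf)\,\Delta(L_0)$ (with $q$ playing the role of $t$). Writing $\tilde P(L) := P(L)|_{l=1, q=t}$, the theorem reduces to checking these two conditions for $\tilde P$ and then invoking the uniqueness of $\Delta$ among isotopy invariants.

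The first step is to confirm that the substitution $l = 1$ is even sensible on $P(L) \in \bbZ(l, q\half)$. I would rewrite the HOMFLY skein relation of Theorem~\ref{thm:homfly-def} in the solved form
\[ P(L_+) = l^{-2}\,P(L_-) + l\inv(q\half - q\nhalf)\,P(L_0), \]
which exhibits a recursive computation of $P(L)$ from unlinks with denominators only in $l$ and $q\half - q\nhalf$. By Corollary~\ref{cor:homfly-unknot} and induction on the number of components, $P(\text{unlink}_n) = \bigl((l - l\inv)/(q\half - q\nhalf)\bigr)^{n-1}$, which has no pole at $l = 1$ (and in fact vanishes there for $n \geq 2$, matching the vanishing of $\Delta$ on multi-component unlinks). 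An induction on crossing number then shows $P(L)|_{l=1} \in \bbZ(q\half)$ is well-defined for every oriented link.

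With well-definedness in hand, substituting $l = 1$ into the HOMFLY skein relation directly gives
\[ P(L_+)|_{l=1} - P(L_-)|_{l=1} = (q\half - q\nhalf)\,P(L_0)|_{l=1}, \]
and setting $q = t$ yields the Conway relation for $\tilde P$; combined with $\tilde P(\unknot) = 1$, uniqueness of the Alexander-Conway polynomial forces $\tilde P = \Delta$. The main obstacle is the well-definedness step: a priori $P$ is only a rational function in $l$, and one must rule out $l = 1$ being a pole. The solved form of the skein relation handles this cleanly, provided the induction is organized with respect to an appropriate complexity (e.g., lexicographic on crossing number and component count) so that every recursive call strictly decreases the measure.
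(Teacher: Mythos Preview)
The paper does not prove this theorem at all: it is stated with a citation to \cite{jones_hecke_1987} and taken as a known result from the literature. So there is no ``paper's own proof'' to compare against; you have supplied one where the paper offers none.

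Your argument is the standard one and is essentially correct: substituting $l=1$ into the HOMFLY skein relation collapses it to the Conway skein relation, the normalization on the unknot matches, and uniqueness of the Alexander--Conway polynomial finishes the job. The well-definedness concern you raise is the right thing to check.

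One small gap: your proposed complexity measure for the skein-tree induction, ``lexicographic on crossing number and component count,'' does not work. In the solved form $P(L_+) = l^{-2}P(L_-) + l^{-1}(q\half-q\nhalf)P(L_0)$, passing from $L_+$ to $L_-$ changes neither the crossing number nor the component count, so the recursion need not terminate under that measure. The usual fix is to order crossings and use a measure like (crossing number, number of crossing switches needed to reach a descending diagram), or equivalently to argue via an ascending/descending ordering on crossings so that each switch moves you strictly closer to an unlink diagram. With that correction the induction goes through and your proof is complete.
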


\subsection{The HOMFLY Polynomial of Rational Knots}\label{ssec:homfly_formulae}

For any oriented rational link $C\big([c_1,\ldots,c_n]\big)$, we define $P[c_1,\ldots,c_n] := P(C\big([c_1,\ldots,c_n]\big))$. Recall from Proposition~\ref{prop:rational-ori} that it will be most natural to work with oriented rational links using even continued fractions, and from Corollary~\ref{cor:rational-pos-even}, that restricting to only even continued fractions loses us no rational links.

We recall formulae for the HOMFLY polynomial $P[b_1,\ldots,b_n]$ of an even continued fraction from \cite{duzhin_formula_2015}. 
First, as in \cite{lee_cluster_2019}, we fix the following notation for $n \geq 1$.
\[ P_0 = P[b_1,\ldots,b_n] \quad\quad P_1 = P[b_1,\ldots,b_{n-1}] \quad\quad P_2 = P[b_1,\ldots,b_{n-2}] \]

\begin{lemma}[{\cite[(3)]{duzhin_formula_2015}\footnote{\label{note:duzhin}Relative to \cite{duzhin_formula_2015}, we make the substitution $a = \ell$, $z = q\half - q\nhalf$ and flip all crossings in $C\big([b_1,\ldots,b_n]\big)$.}}]\label{lem:homfly-rec}
Let $[b_1,\ldots,b_n]$ be any even continued fraction where $n \geq 1$. Then
\[ P_0 = l^{-t_n|b_n|} P_2 + (q\half-q\nhalf)\frac{1-l^{-t_n|b_n|}}{l - l\inv} P_1. \]
\end{lemma}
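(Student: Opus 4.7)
The plan is to prove this recurrence by iteratively applying the HOMFLY skein relation (Theorem~\ref{thm:homfly-def}) to the crossings of the $n$-th tangle, following Duzhin \cite{duzhin_formula_2015}. By Proposition~\ref{prop:rational-ori}, in the canonical orientation all $|b_n|$ crossings of this tangle share sign $t_n$, so the skein may be applied at any one of them.

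First, I would identify the two alternates produced by a single skein application at a crossing of the $n$-th tangle: the sign-flipped diagram cancels the reversed crossing with an adjacent same-sign one via Reidemeister~II, yielding the rational link $C([b_1,\ldots,b_{n-1},t_n(|b_n|-2)])$; the oriented smoothing, after using the canonical orientation and the closure structure of Theorem~\ref{thm:rational}, reduces via a disconnection isolating a disjoint unknotted component to $C([b_1,\ldots,b_{n-1}])$, contributing $P_1$. This yields the recurrence
\[ P[b_1,\ldots,b_{n-1},t_n k] \;=\; l^{-2t_n}\,P[b_1,\ldots,b_{n-1},t_n(k-2)] \;+\; t_n\,l^{-t_n}(q\half-q\nhalf)\,P_1 \]
for $k \geq 2$.

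Next, I would iterate this recurrence in steps of $2$ from $k = |b_n|$ down to $k = 0$, using that $b_n$ is even. The terminal value $P[b_1,\ldots,b_{n-1},0] = P_2$ follows from the continued-fraction identity $[b_1,\ldots,b_{n-1},0] = [b_1,\ldots,b_{n-2}]$. Summing the resulting geometric series and simplifying via $t_n\,l^{-t_n}/(1 - l^{-2t_n}) = 1/(l - l\inv)$ gives the formula in the statement.

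The main obstacle is the geometric identification in the first step: the naive $L_0$ smoothing of one crossing leaves a diagram whose $n$-th tangle has $|b_n|-1$ crossings, which is a different rational link from $C([b_1,\ldots,b_{n-1}])$. Resolving this requires the canonical orientation of Proposition~\ref{prop:rational-ori} to produce a disconnecting smoothing that separates an unknotted sub-diagram — contributing a trivial factor of $1$ — from a copy of $C([b_1,\ldots,b_{n-1}])$. This is the crux of Duzhin's argument, and I would rely on it rather than re-derive the topological details here.
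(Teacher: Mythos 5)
The paper offers no proof of this lemma at all --- it is imported verbatim from \cite{duzhin_formula_2015} (equation (3) there), so your reconstruction is being measured against a citation rather than an argument. Your route --- apply the skein relation at one crossing of the last twist region to get a one-step recurrence, then iterate in steps of two down to a last entry of $0$ and sum the geometric series --- is exactly the cited proof, and the algebra closes: $[b_1,\ldots,b_{n-1},0]=[b_1,\ldots,b_{n-2}]$ identifies the terminal value with $P_2$ via Theorem~\ref{thm:rational-iso}, and the identity $t_nl^{-t_n}/(1-l^{-2t_n})=1/(l-l\inv)$ holds for both values of $t_n$, yielding the stated coefficient of $P_1$. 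Note that your one-step recurrence is precisely the paper's Lemma~\ref{lem:homfly-rec-1}, itself quoted from \cite{duzhin_formula_2015} without proof, so you could invoke that lemma directly instead of re-deriving it.

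Two corrections are needed. First, the last continued-fraction entry in your recurrence should be $\sgn(b_n)k$, not $t_nk$: since $t_n=(-1)^{n-1}\sgn(b_n)$, these differ whenever $n$ is even, and read literally your iteration would terminate at $P[b_1,\ldots,b_{n-1},-b_n]$ rather than at $P_0$ (you conflate the continued-fraction entry $b_n$ with the tangle label $t_n|b_n|$ from Theorem~\ref{thm:rational}). Second, describing the oriented smoothing as ``isolating a disjoint unknotted component'' cannot be correct as stated: a split unknotted component multiplies the HOMFLY polynomial by $(l-l\inv)/(q\half-q\nhalf)$ (Corollary~\ref{cor:homfly-unknot}), which would spoil the coefficient of $P_1$. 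What the antiparallel orientation of Proposition~\ref{prop:rational-ori} actually provides is that the oriented smoothing caps off the last twist region, the remaining $|b_n|-1$ crossings become Reidemeister~I curls, and the resulting diagram is isotopic to $C([b_1,\ldots,b_{n-1}])$ itself, with no extra component. With these repairs the proposal is a faithful reconstruction of the proof the paper cites.
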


A weaker result from the same paper will be useful in Section~\ref{sec:homfly_spec}.
\begin{lemma}[{\cite[Proof of Lem.\;1]{duzhin_formula_2015}}]\label{lem:homfly-rec-1}
Let $[b_1,\ldots,b_n]$ be any even continued fraction where $n \geq 1$ and $b_n$ may be zero. Then,
\[ P[b_1,\ldots,b_n+2\sgn(b_n)] = l^{-2t_n} P_0 + t_n l^{-t_n} (q\half - q\nhalf) P_1. \]
\end{lemma}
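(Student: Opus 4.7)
The plan is to derive this weaker identity directly from the stronger Lemma~\ref{lem:homfly-rec} by algebraic manipulation. The key observation is that for $b_n \neq 0$ we have $\sgn(b_n + 2\sgn(b_n)) = \sgn(b_n)$, so both $[b_1,\ldots,b_n]$ and $[b_1,\ldots,b_{n-1}, b_n + 2\sgn(b_n)]$ are even continued fractions of the same length sharing the same value of $t_n$. I would apply Lemma~\ref{lem:homfly-rec} to each, obtaining formulae for $P_0$ and for $P[b_1,\ldots,b_n+2\sgn(b_n)]$ in terms of the common quantities $P_1 = P[b_1,\ldots,b_{n-1}]$ and $P_2 = P[b_1,\ldots,b_{n-2}]$.

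The main step is then to form the combination $P[b_1,\ldots,b_n+2\sgn(b_n)] - l^{-2t_n} P_0$. Using the factorization $l^{-t_n(|b_n|+2)} = l^{-2t_n}\cdot l^{-t_n|b_n|}$, the $P_2$ contribution cancels entirely, and the coefficient of $(q\half - q\nhalf) P_1$ telescopes to $(1 - l^{-2t_n})/(l - l\inv)$. A direct check in each of the two cases $t_n = \pm 1$ (using $1-l^{-2} = l\inv(l - l\inv)$ and $1-l^2 = -l(l - l\inv)$) simplifies this coefficient to $t_n l^{-t_n}$, giving the claimed identity whenever $b_n \neq 0$.

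The remaining boundary case $b_n = 0$ is not directly covered by this argument, since Lemma~\ref{lem:homfly-rec} presupposes an even continued fraction. However, in this case the convention $[b_1,\ldots,b_{n-1},0] = [b_1,\ldots,b_{n-2}]$ forces $P_0 = P_2$, and a single application of Lemma~\ref{lem:homfly-rec} to $[b_1,\ldots,b_{n-1},\pm 2]$ already yields the desired identity directly (with $t_n$ interpreted according to the chosen value of $\sgn(0)$). Since Lemma~\ref{lem:homfly-rec} is taken as input, the only genuine work is the small coefficient identity $(1-l^{-2t_n})/(l-l\inv) = t_n l^{-t_n}$, so no substantial topological or combinatorial obstacle remains; any difficulty has been absorbed into the stronger recursion.
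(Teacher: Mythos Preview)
The paper does not prove this lemma at all; it is imported verbatim from \cite{duzhin_formula_2015} (specifically from the proof of Lemma~1 there), just as Lemma~\ref{lem:homfly-rec} is. So there is no in-paper proof to compare against.

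Your derivation is nonetheless correct. Applying Lemma~\ref{lem:homfly-rec} to both $[b_1,\ldots,b_n]$ and $[b_1,\ldots,b_{n-1},b_n+2\sgn(b_n)]$ and subtracting $l^{-2t_n}$ times the first from the second does exactly what you say: the $P_2$ terms cancel, and the $P_1$ coefficient collapses via the identity $(1-l^{-2t_n})/(l-l\inv) = t_n l^{-t_n}$. Your handling of the boundary case $b_n = 0$ is also fine: the continued-fraction identity $[b_1,\ldots,b_{n-1},0] = [b_1,\ldots,b_{n-2}]$ gives $P_0 = P_2$, and then a single application of Lemma~\ref{lem:homfly-rec} with $|b_n|=2$ reproduces the claim.

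One remark on logical direction: in \cite{duzhin_formula_2015} the order is the reverse of yours. There, the single-step identity of Lemma~\ref{lem:homfly-rec-1} is obtained \emph{first}, directly from the HOMFLY skein relation of Theorem~\ref{thm:homfly-def} applied to one crossing in the last block of the rational tangle, and then iterated $|b_n|$ times to produce the closed-form recursion of Lemma~\ref{lem:homfly-rec}. Your route---deducing the weaker statement from the stronger one---is perfectly valid given that this paper treats both as black-box citations, but it would be circular inside the original source. If you wanted a self-contained argument independent of Lemma~\ref{lem:homfly-rec}, the skein-relation computation is the natural one and is no harder than what you have written.
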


The statement of Theorem~\ref{thm:B} uses the following inductively defined rational function $m[b_1,\ldots,b_n] \in \bbZ(l,q\half)$. It is easy to verify that under 
the specialization from Theorem~\ref{thm:homfly-to-jones}, $m[b_1,\ldots,b_n]$ is exactly the leading term of $V\big(C\big([b_1,\ldots,b_n]\big)\big)$; see Corollary~5.8 in \cite{lee_cluster_2019}.

\begin{definition}\label{def:m}
For any even continued fraction $[b_1,\ldots,b_n]$ where $n > 0$,
\begin{align*}
m[b_1,\ldots,b_n] &= \begin{cases}
-l q\half\, m[b_1,\ldots,b_{n-1}] &\text{if  $t_n = -1$} \\
l^{-|b_n|}\, m[b_1,\ldots,b_{n-2}] &\text{if $t_{n-1} = -1$, $t_n = 1$} \\
l^{1-|b_n|} q\half\, m[b_1,\ldots,b_{n-1}] &\text{if $t_{n-1} = 1$, $t_n = 1$,} \\
\end{cases}
\end{align*}\begin{align*}
m[\,] &= 1, &
m[b_1,\ldots,b_{-1}] &= -l\inv q\nhalf \left( \frac{1-l^2q}{1-q\inv}\right),
\end{align*}
recalling that we defined $t_0 = -1$.
\end{definition}

\begin{remark}\label{rmk:m}
If $n > 1$, $t_{n-1} = -1$, and $t_n = 1$, then
\[ m[b_1,\ldots,b_n] = -l^{-|b_n|-1}q\nhalf\,m[b_1,\ldots,b_{n-1}]. \]
\end{remark}

\section{Path Posets}\label{sec:pps}

In this section we briefly introduce posets and order ideals, then define path posets and give two alternate constructions for the path poset of a continued fraction.

\subsection{Posets}\label{ssec:posets}

A \textbf{poset} is a set with a transitive, antisymmetric, partial relation $<$. Often we will work with the \textbf{Hasse diagram} of a poset, a directed graph on the set of elements of the poset with an edge from $s$ to $t$ if $s < t$ and there exists no $u$ such that $s < u < t$. In lieu of drawing arrows, if $s < t$ we will always place the vertex $t$ vertically above $s$. For a more complete introduction on posets, see \cite{stanley_enumerative_2011}, for example. 
When we care about the particular elements of $P$ rather than just how they relate to one another, we refer to the poset as \textbf{labeled}. Shown in Figure~\ref{fig:Hasse-exs} are some examples of small posets, only the rightmost of which is labeled. 
\begin{figure}[ht]
$\begin{gathered}\begin{tikzpicture}[scale=0.7]
\node (1) at (1,1) {$\bullet$};
\end{tikzpicture}\end{gathered}\quad\quad
\begin{gathered}\begin{tikzpicture}[scale=0.7]
\node (1) at (1,1) {$\bullet$};
\node (2) at (1,1+1*1.414213562) {$\bullet$};
\node (3) at (1,1+2*1.414213562) {$\bullet$};
\draw (1) -- (2) -- (3);
\end{tikzpicture}\end{gathered}\quad\quad
\begin{gathered}\begin{tikzpicture}[scale=0.7]
\node (1) at (1,1) {$\bullet$};
\node (2) at (2,2) {$\bullet$};
\node (3) at (3,1) {$\bullet$};
\draw (1) -- (2) -- (3);
\end{tikzpicture}\end{gathered}\quad\quad
\begin{gathered}\begin{tikzpicture}[scale=0.7*1.414213562]
\node (0) at (0,0) {$\bullet$};
\node (1) at (-1,1) {$\bullet$};
\node (2) at ( 0,1) {$\bullet$};
\node (3) at ( 1,1) {$\bullet$};
\node (4) at (-1,2) {$\bullet$};
\node (5) at ( 0,2) {$\bullet$};
\node (6) at ( 1,2) {$\bullet$};
\node (7) at (0,3) {$\bullet$};
\draw (0) -- (1);
\draw (0) -- (2);
\draw (0) -- (3);
\draw (1) -- (4);
\draw (1) -- (5);
\draw (2) -- (4);
\draw (2) -- (6);
\draw (3) -- (5);
\draw (3) -- (6);
\draw (4) -- (7);
\draw (5) -- (7);
\draw (6) -- (7);
\end{tikzpicture}\end{gathered}\quad\quad
\begin{gathered}\begin{tikzpicture}[scale=0.7]
\node (1) at (1,1) {$b$};
\node (2) at (2,2) {$a$};
\node (3) at (3,1) {$c$};
\node (4) at (2,0) {$d$};
\draw (3) -- (4) -- (1) -- (2) -- (3);\end{tikzpicture}\end{gathered}$
\caption{Hasse diagrams of some small posets.}
\label{fig:Hasse-exs}
\end{figure}
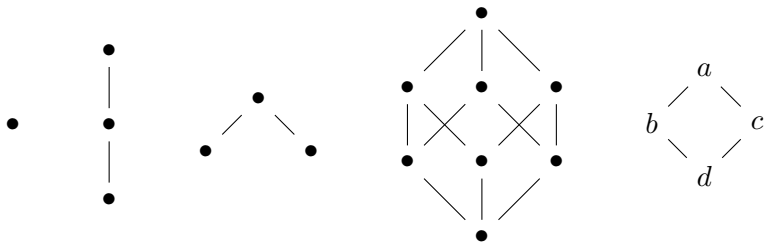

In Section~\ref{sec:cas} we will be interested the following class of sub-posets.

\begin{definition}\label{def:order-ideal}
An \textbf{order ideal} of a poset $P$ is a subset $I \subseteq P$ such that if $t \in I$ and $s < t$, then $s \in I$. We will write this as $I \leq P$.
\end{definition}

\begin{example}
The order ideals of the labeled poset on the right of Figure~\ref{fig:Hasse-exs} are
$\emptyset$, $\{d\}$, $\{b,d\}$, $\{c,d\}$, $\{b,c,d\}$, and $\{a,b,c,d\}$. \end{example}

\subsection{Path Posets}\label{ssec:path_posets}

Path posets were first mentioned, though not named, in \cite{musiker_bases_2013} in relation to perfect matchings of snake graphs. The connection between these posets and perfect matchings is discussed in more detail in \cite{bailey_cluster_2019}, where these posets go by the name `piecewise-linear.' In this section we give a general definition and discuss two different constructions for the path poset of a continued fraction.

A \textbf{path graph} is an undirected graph with $n$ vertices and $n-1$ edges which can be drawn so that all its vertices and edges lie on a single, straight line; see \cite{gross_graph_2005}, for instance. We write $\text{Path}(n)$ to refer to the path graph on $n$ vertices, where $\text{Path}(0)$ is taken to have no vertices or edges. 

\begin{definition}\label{def:path_poset}
A \textbf{path poset} is a poset whose Hasse diagram, viewed as an undirected graph, is $\text{Path}(k)$ for some $k \geq 0$.
\end{definition}

The first three posets on the left of Figure~\ref{fig:Hasse-exs} are path posets. Figure~\ref{fig:path_poset_exs} gives more examples, along with examples of applying the key proposition stated below.

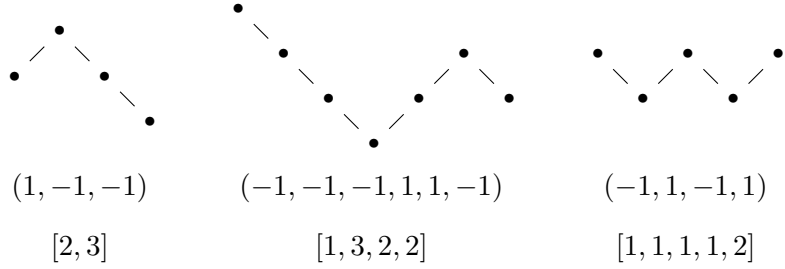
\begin{figure}[ht]
\centering
\begin{tabular}{ccccc}
$\begin{gathered}\begin{tikzpicture}[scale=0.6]
\node (0) at (0,0) {\scalebox{0.8}{$\bullet$}};
\node (1) at (1,1) {\scalebox{0.8}{$\bullet$}};
\node (2) at (2,0) {\scalebox{0.8}{$\bullet$}};
\node (3) at (3,-1) {\scalebox{0.8}{$\bullet$}};
\draw (0) -- (1) -- (2) -- (3);
\end{tikzpicture}\end{gathered}$ &&
$\begin{gathered}\begin{tikzpicture}[scale=0.6]
\node (0) at (0,0) {\scalebox{0.8}{$\bullet$}};
\node (1) at (1,-1) {\scalebox{0.8}{$\bullet$}};
\node (2) at (2,-2) {\scalebox{0.8}{$\bullet$}};
\node (3) at (3,-3) {\scalebox{0.8}{$\bullet$}};
\node (4) at (4,-2) {\scalebox{0.8}{$\bullet$}};
\node (5) at (5,-1) {\scalebox{0.8}{$\bullet$}};
\node (6) at (6,-2) {\scalebox{0.8}{$\bullet$}};
\draw (0) -- (1) -- (2) -- (3) -- (4) -- (5) -- (6);
\end{tikzpicture}\end{gathered}$ &&
$\begin{gathered}\begin{tikzpicture}[scale=0.6]
\node (0) at (0,0) {\scalebox{0.8}{$\bullet$}};
\node (1) at (1,-1) {\scalebox{0.8}{$\bullet$}};
\node (2) at (2,0) {\scalebox{0.8}{$\bullet$}};
\node (3) at (3,-1) {\scalebox{0.8}{$\bullet$}};
\node (4) at (4,0) {\scalebox{0.8}{$\bullet$}};
\draw (0) -- (1) -- (2) -- (3) -- (4);
\end{tikzpicture}\end{gathered}$ \\[14pt]
$(1,-1,-1)$ && $(-1,-1,-1,1,1,-1)$ && $(-1,1,-1,1)$ \\[10pt]
$[2,3]$ && $[1,3,2,2]$ && $[1,1,1,1,2]$
\end{tabular}
\caption{Some (unlabeled) path posets, and as per Propositions~\ref{prop:path_posets_q} and \ref{prop:seqs-pos-cf}, the inner sign sequences and positive continued fractions (with \mbox{$a_n > 1$}) to which they correspond.}
\label{fig:path_poset_exs}
\end{figure}

\begin{proposition}\label{prop:path_posets_q}
There is a bijection between the set of all path posets and the set of all rational numbers $p/q \in \bbQ$ such that $p/q \geq 1$. 
\end{proposition}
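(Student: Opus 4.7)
The plan is to combine Proposition~\ref{prop:seqs-pos-cf} with an obvious identification of path posets with sign sequences. Given a path poset $P$ on $k \geq 1$ vertices, I would label its vertices $v_1,\ldots,v_k$ reading the Hasse diagram left to right, and extract the sequence $(s_1,\ldots,s_{k-1}) \in \{1,-1\}^{k-1}$ with $s_i = 1$ if $v_i < v_{i+1}$ and $s_i = -1$ otherwise. The inverse is immediate: any such sequence determines a valid Hasse diagram, since orienting the edges of a path never introduces transitivity shortcuts — no skipping covers can arise in a graph of maximum degree $2$ laid out on a line.

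Combining this with Proposition~\ref{prop:seqs-pos-cf}, a sequence of length $k-1 \geq 0$ is the inner sign sequence of a unique positive continued fraction with $\ell_n = k+1 \geq 2$, which in turn determines a unique $p/q > 1$ via Proposition~\ref{prop:pos-even}(a). The only rational $p/q \geq 1$ unaccounted for is $p/q = 1$; I would match this to the empty path poset ($k = 0$) as an extra edge case, consistent with its positive continued fraction $[1]$ having $\ell_n = 1$ and thus falling outside the hypothesis of Proposition~\ref{prop:seqs-pos-cf}.

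The main subtlety — and the step that deserves the most care — is convincing the reader that ``path poset'' carries an implicit left-to-right orientation, so that the two possible orientations of a chain on $k$ vertices (the rising chain and the falling chain) count as distinct path posets. Without this convention one obtains isomorphism classes modulo the involution $(s_1,\ldots,s_{k-1}) \mapsto (-s_{k-1},\ldots,-s_1)$, and the counts would fail to match those of Proposition~\ref{prop:seqs-pos-cf}. I would articulate this convention explicitly at the start of the proof — motivated by the figure drawings and reinforced by the continued-fraction construction of path posets that will follow in the next subsection — so that the count of path posets on $k$ vertices is exactly $|\{1,-1\}^{k-1}| = 2^{k-1}$, and the bijection falls out from the two steps above.
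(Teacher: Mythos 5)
Your proof is correct and follows essentially the same route as the paper's: identify a path poset with the sign sequence recording whether each Hasse edge rises or falls, invoke Proposition~\ref{prop:seqs-pos-cf}, and handle $p/q = 1 \leftrightarrow \emptyset$ as a separate degenerate case. The orientation subtlety you flag is genuine --- reading the Hasse path in the opposite direction applies exactly the involution $(s_1,\ldots,s_{k-1}) \mapsto (-s_{k-1},\ldots,-s_1)$ you describe, so the count of abstract isomorphism classes really would come up short --- and the paper's proof silently adopts the same left-to-right convention that you make explicit, so your added care is a strength rather than a deviation.
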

\begin{proof}
Given a path poset on $k > 0$ elements, we have $(k-1)$ choices of how to orient each edge in the Hasse diagram. Define a sequence $(s_i) \in \{-1,1\}^{k-1}$ by $s_i = 1$ if the $i^{\text{th}}$ edge in the Hasse diagram is oriented upwards and $s_i = -1$ if it is oriented downwards. Path posets on $k$ elements (up to isomorphism) are in one-to-one correspondence with sign sequences of length $k-1$ in this way. Thus, by Proposition~\ref{prop:seqs-pos-cf}, they are in one-to-one correspondence with positive continued fractions such that $k = \ell_n-1$. 
More generally, Proposition~\ref{prop:seqs-pos-cf} gives us a bijection between the set of all path posets on $k > 0$ elements and the set of all $p/q \in \bbQ$ such that $p/q > 1$. We then say that the degenerate case of the path poset on $\emptyset$ corresponds to $1 \in \bbQ$.
\end{proof}

\begin{definition}\label{def:path_posets_q}
For all $p/q \in \bbQ$ such that $p/q \geq 1$, let $Q(p/q)$ be the path poset corresponding to $p/q$ as per Proposition~\ref{prop:path_posets_q}. We define $Q(\infty) = \emptyset$.
\end{definition}

The following definition, which is more similar in style to Theorem~\ref{thm:rational}, gives a way to realize a broader class of continued fractions as path posets.

\begin{definition}\label{def:path_poset_cf}
Let $[c_1,\ldots,c_n]$ be any continued fraction such that $n \geq 0$, each $|c_i| \geq 1$, and if $t_i = t_{i+1}$ then $|c_i|,|c_{i+1}| > 1$. We define the (labeled) path poset $Q[c_1,\ldots,c_n]$ as follows. We begin with the disjoint union of the posets $S_1,\ldots,S_n$, where each $S_i$ is given by
\[ S_i = \begin{tabular}{ccc}
$\begin{gathered}\begin{tikzpicture}[scale=0.9,xscale=0.8]
\node (lip1) at (1,1) {$\ell_{i-1}+1$};
\node (lip2) at (2,2) {$\ell_{i-1}+2$};
\node (di) at (3,3) {\rotatebox{51.34}{$\cdots$}};
\node (lsim1) at (4,4) {$\ell_i - 1$};
\draw (lip1) -- (lip2) -- (di) -- (lsim1);
\end{tikzpicture}\end{gathered}$ & $\begin{gathered}\emptyset\end{gathered}$
& $\begin{gathered}\begin{tikzpicture}[scale=0.9,xscale=0.8]
\node (lip1) at (1,4) {$\ell_{i-1}+1$};
\node (lip2) at (2,3) {$\ell_{i-1}+2$};
\node (di) at (3,2) {\rotatebox{-51.34}{$\cdots$}};
\node (lsim1) at (4,1) {$\ell_i - 1$};
\draw (lip1) -- (lip2) -- (di) -- (lsim1);
\end{tikzpicture}\end{gathered}$ \\
if $t_i|c_i| > 1$ & if $|c_i| = 1$ & if $t_i|c_i| < -1$
\end{tabular} \]
recalling that $\ell_i = \ell_{i-1} + |c_i|$, and therefore each $S_i$ has $|c_i| - 1$ elements. Then for all $i < n$, if $t_i = 1$ join $S_i$ and $S_{i+1}$ as follows.
\[ \begin{tabular}{ccc}
$\begin{gathered}\begin{tikzpicture}[scale=0.9,xscale=0.8]
\node (lip1) at (1,1) {$\ell_{i-1}+1$};
\node (di) at (2,2) {\rotatebox{51.34}{$\cdots$}};
\node (lsim1) at (3,3) {$\ell_i - 1$};
\node (lsi) at (4,4) {$\textcolor{blue}{\ell_i}$};
\node (lsip1) at (5,3) {$\ell_i+1$};
\node (dsi) at (6,2) {\rotatebox{-51.34}{$\cdots$}};
\node (lsssim1) at (7,1) {$\ell_{i+1} - 1$};
\draw (lip1) -- (di) -- (lsim1);
\draw[color=blue] (lsim1) -- (lsi) -- (lsip1);
\draw (lsip1) -- (dsi) -- (lsssim1);
\end{tikzpicture}\end{gathered}$ & &
$\begin{gathered}\begin{tikzpicture}[scale=0.9,xscale=0.8]
\node (lip1) at (1,1) {$\ell_{i-1}+1$};
\node (di) at (2,2) {\rotatebox{51.34}{$\cdots$}};
\node (lsim1) at (3,3) {$\ell_i - 1$};
\node (lsip1) at (4,2) {$\ell_i+1$};
\node (dsi) at (5,3) {\rotatebox{51.34}{$\cdots$}};
\node (lsssim1) at (6,4) {$\ell_{i+1} - 1$};
\draw (lip1) -- (di) -- (lsim1);
\draw[color=blue] (lsim1) -- (lsip1);
\draw (lsip1) -- (dsi) -- (lsssim1);
\end{tikzpicture}\end{gathered}$ \\
if $t_{i+1} \neq t_i$ &\quad& if $t_{i+1} = t_i$
\end{tabular} \]
Otherwise, if $t_i = -1$, join $S_i$ and $S_{i+1}$ as follows. \[ \begin{tabular}{ccc}
$\begin{gathered}\begin{tikzpicture}[scale=0.9,xscale=0.8]
\node (lip1) at (1,-1) {$\ell_{i-1}+1$};
\node (di) at (2,-2) {\rotatebox{-51.34}{$\cdots$}};
\node (lsim1) at (3,-3) {$\ell_i - 1$};
\node (lsi) at (4,-4) {$\textcolor{blue}{\ell_i}$};
\node (lsip1) at (5,-3) {$\ell_i+1$};
\node (dsi) at (6,-2) {\rotatebox{51.34}{$\cdots$}};
\node (lsssim1) at (7,-1) {$\ell_{i+1} - 1$};
\draw (lip1) -- (di) -- (lsim1);
\draw[color=blue] (lsim1) -- (lsi) -- (lsip1);
\draw (lsip1) -- (dsi) -- (lsssim1);
\end{tikzpicture}\end{gathered}$ & &
$\begin{gathered}\begin{tikzpicture}[scale=0.9,xscale=0.8]
\node (lip1) at (1,-1) {$\ell_{i-1}+1$};
\node (di) at (2,-2) {\rotatebox{-51.34}{$\cdots$}};
\node (lsim1) at (3,-3) {$\ell_i - 1$};
\node (lsip1) at (4,-2) {$\ell_i+1$};
\node (dsi) at (5,-3) {\rotatebox{-51.34}{$\cdots$}};
\node (lsssim1) at (6,-4) {$\ell_{i+1} - 1$};
\draw (lip1) -- (di) -- (lsim1);
\draw[color=blue] (lsim1) -- (lsip1);
\draw (lsip1) -- (dsi) -- (lsssim1);
\end{tikzpicture}\end{gathered}$ \\
if $t_{i+1} \neq t_i$ &\quad& if $t_{i+1} = t_i$
\end{tabular} \]
\end{definition}

An example of this construction is shown in Figure~\ref{fig:path_poset_q_ex}.

\begin{figure}[ht]
\centering
\begin{tikzpicture}[scale=0.8,xscale=0.95]
\node (1) at (1,1) {$1$};
\node (2) at (2,2) {$\textcolor{blue}{2}$};
\node (3) at (3,1) {$3$};
\node (4) at (4,0) {$4$};
\node (6) at (5,1) {$6$};
\node (7) at (6,0) {$7$};
\node (8) at (7,-1) {$8$};
\node (10) at (8,0) {$10$};
\node (11) at (9,-1) {$\textcolor{blue}{11}$};
\node (12) at (10,0) {$12$};
\node (13) at (11,1) {$13$};
\node (14) at (12,2) {$\textcolor{blue}{14}$};
\draw[color=blue] (1) -- (2) -- (3);
\draw (3) -- (4);
\draw[color=blue] (4) -- (6);
\draw (6) -- (7) -- (8);
\draw[color=blue] (8) -- (10) -- (11) -- (12);
\draw (12) -- (13);
\draw[color=blue] (13) -- (14);
\node at (-1,-4) {$t_i|c_i| =$};
\node at (1,1-1-0.75) {$S_1$};
\node at (1,-4) {$2$};
\draw[decorate, decoration={calligraphic brace,amplitude=5pt}, line width=1pt]
    ( $ (1+0.425,1-1) $ ) -- ( $ (1-0.425,1-1) $ );
\node at (3.5,0-1-0.75) {$S_2$};
\node at (3.5,-4) {$-3$};
\draw[decorate, decoration={calligraphic brace,amplitude=5pt}, line width=1pt]
    ( $ (4+0.25,0-1) $ ) -- ( $ (3-0.25,0-1) $ );
\node at (6,-1-1-0.75) {$S_3$};
\node at (6,-4) {$-4$};
\draw[decorate, decoration={calligraphic brace,amplitude=5pt}, line width=1pt]
    ( $ (7+0.25,-1-1) $ ) -- ( $ (5-0.25,-1-1) $ );
\node at (8,-1-1-0.75) {$S_4$};
\node at (8,-4) {$-2$};
\draw[decorate, decoration={calligraphic brace,amplitude=5pt}, line width=1pt]
    ( $ (8+0.425,-1-1) $ ) -- ( $ (8-0.425,-1-1) $ );
\node at (10.5,-1-1-0.75) {$S_5$};
\node at (10.5,-4) {$3$};
\draw[decorate, decoration={calligraphic brace,amplitude=5pt}, line width=1pt]
    ( $ (11+0.25,-1-1) $ ) -- ( $ (10-0.25,-1-1) $ );
\node at (13,1-1-0.75) {$S_6$};
\node at (13,-4) {$-1$};
\node at (13,1-1) {$\emptyset$};
\end{tikzpicture}
\caption{The labeled path poset $Q[2,3,-4,2,3,1]$.}
\label{fig:path_poset_q_ex}
\end{figure}

\begin{figure}[ht]
\centering
\begin{tikzpicture}[scale=0.8,xscale=0.95]
\node (1) at (1,1) {$1$};
\node (2) at (2,2) {$\textcolor{blue}{2}$};
\node (3) at (3,1) {$3$};
\node (4) at (4,0) {$\textcolor{blue}{4}$};
\node (5) at (5,1) {$\textcolor{blue}{5}$};
\node (6) at (6,0) {$6$};
\node (7) at (7,-1) {$\textcolor{blue}{7}$};
\node (8) at (8,0) {$\textcolor{blue}{8}$};
\node (9) at (9,-1) {$\textcolor{blue}{9}$};
\node (10) at (10,0) {$10$};
\node (11) at (11,1) {$11$};
\node (12) at (12,2) {$12$};
\draw[color=blue] (1) -- (2) -- (3) -- (4) -- (5) -- (6) -- (7) -- (8) -- (9) -- (10);
\draw (10) -- (11) -- (12);
\node at (-1,-4) {$t_i|c_i| =$};
\node at (1,1-1-0.75) {$S_1$};
\node at (1,-4) {$2$};
\draw[decorate, decoration={calligraphic brace,amplitude=5pt}, line width=1pt]
    ( $ (1+0.425,1-1) $ ) -- ( $ (1-0.425,1-1) $ );
\node at (3,1-1-0.75) {$S_2$};
\node at (3,-4) {$-2$};
\draw[decorate, decoration={calligraphic brace,amplitude=5pt}, line width=1pt]
    ( $ (3+0.425,1-1) $ ) -- ( $ (3-0.425,1-1) $ );
\node at (4.5,0-1-0.75) {$S_3$};
\node at (4.5,-4) {$1$};
\node at (4.5,0-1) {$\emptyset$};
\node at (6,0-1-0.75) {$S_4$};
\node at (6,-4) {$-2$};
\draw[decorate, decoration={calligraphic brace,amplitude=5pt}, line width=1pt]
    ( $ (6+0.425,0-1) $ ) -- ( $ (6-0.425,0-1) $ );
\node at (7.5,-1-1-0.75) {$S_5$};
\node at (7.5,-4) {$1$};
\node at (7.5,-1-1) {$\emptyset$};
\node at (8.5,-1-1-0.75) {$S_6$};
\node at (8.5,-4) {$-1$};
\node at (8.5,-1-1) {$\emptyset$};
\node at (11,0-1-0.75) {$S_7$};
\node at (11,-4) {$4$};
\draw[decorate, decoration={calligraphic brace,amplitude=5pt}, line width=1pt]
    ( $ (12+0.25,0-1) $ ) -- ( $ (10-0.25,0-1) $ );
\end{tikzpicture}
\caption{The labeled path poset $Q[2,2,1,2,1,1,4]$, isomorphic to $Q[2,3,-4,2,3,1]$ from Figure~\ref{fig:path_poset_q_ex} but with different labels. Both continued fractions are expansions of $206/87$ and so both posets are isomorphic to $Q(206/87)$ by Theorem~\ref{thm:path_poset_cf}.}
\label{fig:path_poset_q_ex_pos}
\end{figure}
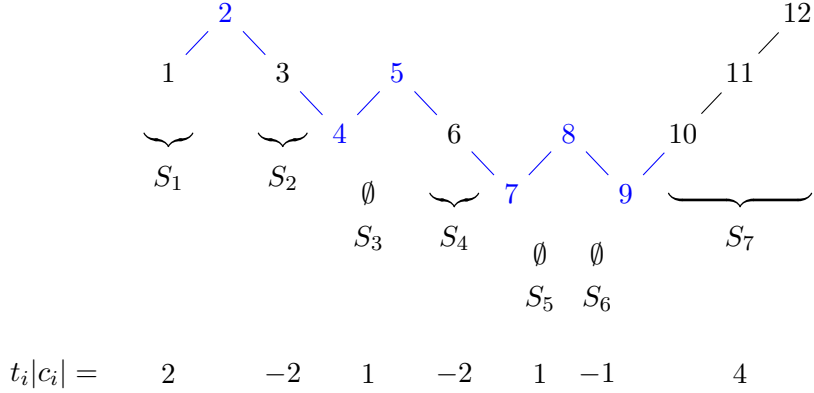

\begin{remark}\label{rmk:path_poset_cf_abs}
A rational number $p/q \in \bbQ^\infty$ has a continued fraction expansion that satisfies the conditions of Definition~\ref{def:path_poset_cf} if and only if $|p/q| \geq 1$. \end{remark}

The follow observation about the order ideals of the sub-posets $S_m$ which appear in Definition~\ref{def:path_poset_cf} will be essential in later sections.

\begin{remark}\label{rmk:Sn-order-ideals}
Every order ideal of $S_m$ is of the following form for some $0 \leq k \leq |c_i|-1$,
where $\lambda_m(j)$, as defined below, is the $j^{\text{th}}$ vertex from the bottom of $S_m$.
\begin{align*}
\left\{ \lambda_m(j) : 1 \leq j \leq k \right\} && \text{where} &&
\lambda_m(j) = \begin{cases}
\ell_{m-1} + j &\text{if $t_m = 1$} \\
\ell_m - j &\text{if $t_m = -1$.}
\end{cases}
\end{align*}
\end{remark}

\begin{example}
If $|c_1| = 2$ and $|c_2| = 5$, meaning $\ell_1 = 3$ and $\ell_2 = 8$, then Remark~\ref{rmk:Sn-order-ideals} states that the order ideals of $S_2$ are exactly:
\begin{align*}
\begin{gathered}
\emptyset, \{4\}, \{4,5\}, \{4,5,6\}, \{4,5,6,7\} \\
\text{if $t_2 = 1$}
\end{gathered} && \begin{gathered}
\emptyset, \{7\}, \{6,7\}, \{5,6,7\}, \{4,5,6,7\} \\
\text{if $t_2 = -1$.}
\end{gathered}
\end{align*}
\end{example}

The following observation about the parity of the labels of each $S_m$ in the case of even continued fractions is the key property of this particular labeling scheme, and is what powers Lemma~\ref{lem:Sn-Fpoly}.

\begin{remark}\label{rmk:Sn-labels}
For even continued fractions, the parity of $\lambda_m(j)$ is the same as the parity of $j$, since each sum $l_i$ is even. As a consequence, for example, the first and last vertices of each $S_m$ are always odd.
\end{remark}

The following theorem shows that, up to the map used in Remark~\ref{rmk:invol}, the two different constructions above for the path poset of a rational number always coincide. As a consequence, it shows that the construction in Definition~\ref{def:path_poset_cf} is well-defined. The proof of this theorem is fairly technical and does not give any insight relevant to understanding the proof of Theorem~\ref{thm:B}, so it is postponed until Appendix~\ref{appx:thm:path_poset_cf}. 
\begin{theorem}\label{thm:path_poset_cf}
For any continued fraction $[c_1,\ldots,c_n]$ which satisfies the conditions of Definition~\ref{def:path_poset_cf} and any $p/q \in \bbQ^\infty$ such that $p/q \geq 1$, if $[c_1,\ldots,c_n] = p/q$ or $p/(q-\sgn(q)\,p)$ then $Q[c_1,\ldots,c_n]$ is isomorphic to $Q(p/q)$.
\end{theorem}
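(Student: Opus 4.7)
The plan is to reduce the isomorphism $Q[c_1,\ldots,c_n]\cong Q(p/q)$ to an equality of two sign sequences. An unlabeled path poset is determined up to isomorphism (modulo reversal) by the sequence of $\pm 1$ obtained by reading its Hasse diagram from one end to the other and recording whether each edge goes up or down -- this is the sequence $(s_i)$ constructed in the proof of Proposition~\ref{prop:path_posets_q}. From that same proof, the sequence associated to $Q(p/q)$ equals the inner sign sequence $\isgn[a_1,\ldots,a_m]$ of the unique positive continued fraction expansion $[a_1,\ldots,a_m]$ of $p/q$. So the theorem reduces to showing that the edge-orientation sequence of $Q[c_1,\ldots,c_n]$, call it $\sigma[c_1,\ldots,c_n]$, equals $\isgn[a_1,\ldots,a_m]$ whenever $[c_1,\ldots,c_n] = p/q$ or $p/(q-\sgn(q)p)$.

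Computing $\sigma[c_1,\ldots,c_n]$ is a direct bookkeeping from Definition~\ref{def:path_poset_cf}. Each non-empty $S_i$ contributes $|c_i|-2$ internal edges, all of orientation $t_i$. Each gluing between $S_i$ and $S_{i+1}$ contributes, when $t_i = t_{i+1}$ (which forces $|c_i|,|c_{i+1}|>1$), a single edge of orientation $-t_i$; when $t_i \neq t_{i+1}$, it contributes the two edges incident to the middle vertex $\ell_i$, of orientations $(t_i, t_{i+1})$, with one of these edges missing if the corresponding neighbor is absent (i.e.\ if $|c_i|=1$ or $|c_{i+1}|=1$). Concatenating across $i=1,\ldots,n$ produces an explicit formula for $\sigma[c_1,\ldots,c_n]$ in terms of the $t_i$'s and $|c_i|$'s alone.

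The harder step is matching $\sigma[c_1,\ldots,c_n]$ with $\isgn[a_1,\ldots,a_m]$ under the hypothesis. I would proceed by induction on $n$, analyzing how removing the last term $c_n$ affects both the rational value of $[c_1,\ldots,c_n]$ and the tail of $\sigma[c_1,\ldots,c_n]$, and matching this with the analogous operation on the positive expansion $[a_1,\ldots,a_m]$. The main obstacle is that these two decompositions need not align term-by-term: the identity $[\ldots,a,1]=[\ldots,a+1]$ from equation~\eqref{eqn:last_1} and the involution $p/q\mapsto p/(q-\sgn(q)p)$ from Remark~\ref{rmk:invol} -- the latter being precisely the ambiguity captured by the ``or'' in the hypothesis -- force a case analysis based on whether $|c_n|=1$, whether $t_{n-1}=t_n$, and the sign of $a_m$. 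Each case requires its own careful bookkeeping to verify that $\sigma$ and $\isgn$ agree, which accounts for the technicality that motivates deferring the proof to the appendix.
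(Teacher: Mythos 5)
Your reduction of the theorem to an equality of sign sequences, and your bookkeeping for the edge-orientation sequence $\sigma[c_1,\ldots,c_n]$ of Definition~\ref{def:path_poset_cf} (the $|c_i|-2$ internal edges of sign $t_i$, the single connecting edge of sign $-t_i$ when $t_i=t_{i+1}$, the pair $(t_i,t_{i+1})$ around the vertex $\ell_i$ when $t_i\neq t_{i+1}$), are both correct; the positive case of this computation is exactly Lemma~\ref{lem:path_posets_pos_cf} in the paper. The gap is the step you defer to ``careful bookkeeping.'' Inducting on $n$ by removing $c_n$ tells you easily how $\sigma$ grows --- it gains the tail $(t_{n-1},t_n,\ldots,t_n)$ or $(-t_n,t_n,\ldots,t_n)$ --- but on the other side you must show that $\isgn$ of the \emph{positive expansion} of the value of $[c_1,\ldots,c_n]$ is obtained from $\isgn$ of the positive expansion of the value of $[c_1,\ldots,c_{n-1}]$ by appending that same tail. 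Those two positive expansions are not related by truncation or concatenation in any way the paper makes available; what you would need is an explicit description of the inverse of the bijection in Proposition~\ref{prop:seqs-pos-cf}, i.e.\ a rule for how appending one sign to an inner sign sequence transforms the underlying rational number. That rule is the entire content of the hard direction, and your proposal supplies no mechanism for it.

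The paper's proof avoids ever computing $\isgn$ of a non-positive expansion. It normalizes the continued fraction from the left: Lemma~\ref{lem:path_poset_cf} uses the identity $[a_1,\ldots,a_k,c_{k+1},\ldots,c_n]=[a_1,\ldots,a_{k-1},(a_k-1),1,(-c_{k+1}-1),-c_{k+2},\ldots,-c_n]$ to absorb the first negative entry while preserving the value and changing the poset only by an explicit local isomorphism of the sub-posets around $S_k$ and $S_{k+1}$; iterating produces a positive expansion of the same rational with an isomorphic poset, after which Lemma~\ref{lem:path_posets_pos_cf} finishes. A leading negative entry is handled first via $[c_1,\ldots,c_n]\mapsto[1,(-c_1-1),-c_2,\ldots,-c_n]$, which changes the value from $p/q$ to $p/(q+p)$ --- this is where the involution of Remark~\ref{rmk:invol}, and hence the ``or'' in the hypothesis, actually enters; your sketch mentions this ambiguity but does not locate its source. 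If you want to keep your induction-on-the-tail strategy, explicit continued-fraction identities of this kind are the missing ingredient you would have to develop and track through the positive expansions.
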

\begin{proof}
See Appendix~\ref{appx:thm:path_poset_cf}.
\end{proof}

It is important to note that the theorem above only provides an isomorphism of posets, and thus the particular labels may change depending on the expansion chosen; see Figures~\ref{fig:path_poset_q_ex} and \ref{fig:path_poset_q_ex_pos} for an example. Using the correspondence between path posets and snake graphs discussed in the next section, this is a key difference between our work and that in \cite{lee_cluster_2019}.

\section{Cluster Algebras and the $F$-Polynomial}\label{sec:cas}

In this section we connect path posets as introduced in Section~\ref{sec:pps} to cluster algebras. We begin by very briefly introducing cluster algebras and the existing work on computing the $F$-polynomial from a snake graph. Then, we discuss the connection between these results and path posets, and define the $F$-polynomial of a path poset. We conclude with some formulae for computing the $F$-polynomial of a path poset.

\subsection{The $F$-Polynomial}\label{ssec:cas}

For our purposes, a \textbf{cluster algebra} $\calA$ is a subalgebra of $\bbQ(x_1^{\pm1},\ldots,x_n^{\pm1},y_1,\ldots,y_n)$ generated by a set $\{x_t\}$ of \textbf{cluster variables}, where each $x_t \in \bbZ[x_1^{\pm1},\ldots,x_n^{\pm1},y_1,\ldots,y_n]$. The \allbf{$F$-polynomial} $F_t \in \bbZ[y_1,\ldots,y_n]$ of the cluster variable $x_t$ is obtained by setting $x_1 = \cdots = x_n = 1$. For a full introduction, including a complete definition of a cluster algebra, see \cite{canakci_cluster_2018}, \cite{schiffler_lecture_2016}, or \cite{fomin_cluster_2002}.

We will be interested in a formula for computing the $F$-polynomial which uses combinatorial objects called snake graphs, first discussed in \cite{musiker_cluster_2010}.

\begin{definition}[\cite{canakci_cluster_2018}]
A \textbf{snake graph} $\calG$ is a planar graph consisting of a sequence of square tiles $G_1,\ldots,G_d$ for $d \geq 1$ such that $G_i$ and $G_{i+1}$ share exactly one edge, and it is either the east edge of $G_i$ identified with the west edge of $G_{i+1}$, or the north edge of $G_i$ identified with the south edge of $G_{i+1}$. A single southern edge is the $d = 0$ case of a snake graph.
\end{definition}

In \cite{canakci_cluster_2018}, the authors show that snake graphs are in one-to-one correspondence with positive continued fractions $[a_1,\ldots,a_n]$, where as in the literature, we write $\calG[a_1,\ldots,a_n]$ to denote the snake graph corresponding to $[a_1,\ldots,a_n]$. 
In \cite{musiker_positivity_2011}, the authors show that for a certain class of cluster algebras, every cluster variable corresponds to a \textbf{labeled snake graph}, a snake graph where each tile $G_i$ has a label $\tau(i) \in \{1,\ldots,n\}$. Furthermore, the authors prove the following cluster expansion formula, which uses the notion of a \textbf{perfect matching} of a snake graph $\calG$, a subset $P$ of the edges of $\calG$ such that every vertex in $\calG$ is adjacent to exactly one edge in $P$. We write $\Match(\calG)$ to denote the set of all perfect matchings of $\calG$, $P_-$ to denote the unique perfect matching of $\calG$ which contains the south edge of $G_1$ and otherwise only boundary edges, and $X \ominus Y := (X \cup Y) \setminus (X \cap Y)$ to denote the symmetric difference.

\begin{theorem}[\cite{musiker_positivity_2011}]\label{thm:expansion-formula}
For any labeled snake graph $\calG$ which corresponds to the cluster variable $x_t \in \calA$,
\[ F_t = \sum_{P \in \Match(\calG)} y(P) \quad\quad\text{ where }\quad\quad y(P) = \prod_{G_i \subseteq P \ominus P_-} y_{\tau(i)}. \]
\end{theorem}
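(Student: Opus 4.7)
The natural strategy is induction on the number of tiles $d$ of the snake graph $\calG$, using cluster mutation. The cluster algebras where this formula applies are those of surface type, in which each cluster variable $x_t$ corresponds to an arc in a triangulated surface and mutations correspond to flipping arcs; I would take this dictionary as given.

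For the base case $d = 0$, the snake graph is a single southern edge with a unique perfect matching, which is $P_-$ itself. Then $P \ominus P_- = \emptyset$, so $y(P_-) = 1$ (empty product), giving the right-hand side value $1$. A $d=0$ snake graph corresponds to an initial cluster variable $x_i$, whose $F$-polynomial is $1$ by definition, so both sides agree.

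For the inductive step, I would perform a flip on the arc corresponding to $x_t$, producing a cluster variable $x_{t'}$ whose snake graph $\calG_{t'}$ has strictly fewer tiles than $\calG$ (such a flip exists for any non-initial arc). The corresponding cluster exchange relation specializes, after setting $x_1 = \cdots = x_n = 1$, to an identity
\[ F_t \, F_{t'} = Y_+ \, F_{M_1} + Y_- \, F_{M_2}, \]
where $M_1, M_2$ are monomials in cluster variables whose snake graphs are strictly smaller, and $Y_\pm$ are explicit $y$-monomials determined by the coefficient part of the mutation. By induction, each $F$-polynomial on the right-hand side is the matching sum for its snake graph.

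It then remains to exhibit a weight-preserving bijection of the form
\[ \Match(\calG) \times \Match(\calG_{t'}) \;\longleftrightarrow\; \Match(\calG_{M_1}) \;\sqcup\; \Match(\calG_{M_2}), \]
under which the $y$-weights $y(P)$ multiply correctly, carrying the $Y_\pm$ factors on the respective summands. This is the crux of the argument and the step I would expect to require the most work: it is a ``grafting'' or skein-type identity on snake graphs in the spirit of \cite{canakci_snake_2013} and \cite{canakci_cluster_2018}. A perfect matching of $\calG$ either uses or avoids a distinguished pair of ``flip'' edges, and depending on the case it decomposes into matchings of the two smaller snake graphs. The delicate bookkeeping is tracking how the reference matching $P_-$ decomposes alongside an arbitrary $P$, so that the symmetric differences on the two sides align term by term with the correct tile labels $\tau(i)$; once this is set up cleanly, the inductive step closes and the theorem follows.
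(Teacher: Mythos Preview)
The paper does not contain a proof of this theorem: it is quoted verbatim from \cite{musiker_positivity_2011} as an established result, and the subsequent development (Theorem~\ref{thm:perfmatchings}, Definitions~\ref{def:realize}--\ref{def:f-poly}, Proposition~\ref{prop:F-poly}) simply takes it as input. So there is no proof in the paper to compare your proposal against.

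That said, your sketch is broadly in the spirit of the original argument in \cite{musiker_positivity_2011}, which proceeds by induction using exchange relations and a matching bijection. Two points worth flagging. First, your claim that one can always flip to a snake graph with \emph{strictly fewer} tiles is not obvious and, as stated, is not quite how the original argument is organized; the induction there is on a more carefully chosen complexity measure, and the exchange relation involves products of cluster variables whose snake graphs need not all be shorter in the naive sense. Second, you correctly identify the bijection
\[
\Match(\calG) \times \Match(\calG_{t'}) \longleftrightarrow \Match(\calG_{M_1}) \sqcup \Match(\calG_{M_2})
\]
as the crux, but this is essentially the entire content of the theorem, and ``grafting in the spirit of \cite{canakci_snake_2013}'' is not a proof; those skein relations were in fact developed \emph{after} \cite{musiker_positivity_2011} and partly as a reformulation of what makes this step work. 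If you were to carry this out in full you would need to track the minimal matching $P_-$ through the decomposition with real care, and that bookkeeping is substantial.
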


The monomial $y(P)$ is called the \textbf{height} of $P$. We have the following key result, which connects perfect matchings and the height monomial to our work in the previous section.

\begin{theorem}[{\cite[Thm.\;5.4]{musiker_bases_2013}}]\label{thm:perfmatchings}
The perfect matchings of $\calG[a_1,\ldots,a_n]$ 
are in one-to-one correspondence with the order ideals of $Q[a_1,\ldots,a_n]$. Furthermore, if $P$ corresponds to the order ideal $I \leq Q[a_1,\ldots,a_n]$, the height satisfies
\[ y(P) = \prod_{i \in I} y_{\tau(i)}. \]
\end{theorem}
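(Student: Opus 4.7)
The plan is to construct the bijection explicitly by starting from the minimum matching $P_-$ and ``twisting'' subsets of tiles in a manner governed by the structure of $Q[a_1,\ldots,a_n]$. The key underlying fact, standard in the snake graph literature, is that for any perfect matching $P$ of $\calG[a_1,\ldots,a_n]$, the symmetric difference $P \ominus P_-$ is a disjoint union of cycles, each bounding a connected block of tiles. Hence the map
\[ P \longmapsto T(P) := \{\, i : G_i \text{ lies in the region enclosed by } P \ominus P_-\,\} \]
is well-defined, and it suffices to identify the image of this map with the order ideals of $Q[a_1,\ldots,a_n]$ and check that $T(P_-) = \emptyset$.

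First I would characterize which subsets $T \subseteq \{1,\ldots,d\}$ arise as $T(P)$. A tile $G_i$ can be added to $T$ only subject to local consistency: examining how $P_-$ sits on $G_i$ and its neighbors $G_{i-1}, G_{i+1}$, including $G_i$ in the twisted region forces specific neighboring tiles to be twisted as well. These forced inclusions yield covering relations that organize $\{G_1,\ldots,G_d\}$ into a poset whose order ideals are exactly the admissible $T$. Next I would identify this poset with $Q[a_1,\ldots,a_n]$ as built in Definition~\ref{def:path_poset_cf}. Since both structures are constructed block-by-block from the continued fraction terms, the identification reduces to two local checks: one on each block $S_i$ (comparing a straight run of tiles in $\calG$ with a straight chain in $Q$), and one on each join between consecutive blocks, distinguishing the cases $t_{i+1} = t_i$ and $t_{i+1} \neq t_i$.

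The height formula is then automatic: by construction $P \ominus P_- = \bigcup_{i \in T(P)} \partial G_i$, so $y(P) = \prod_{i \in T(P)} y_{\tau(i)}$, and under the bijection $P \leftrightarrow I = T(P)$ this is the claimed identity. The main obstacle is the case analysis in the second step: one must verify that the zigzag shape of the Hasse diagram of $Q$ in Definition~\ref{def:path_poset_cf} matches precisely how ``straight'' runs of tiles interact with ``turns'' of the snake graph under flipping. The bookkeeping is delicate because which end of a run $S_i$ is minimal in the poset depends on both $t_i$ and $t_{i+1}$, so each of the four local configurations at a join must be checked separately and shown to agree with the corresponding join rule in Definition~\ref{def:path_poset_cf}.
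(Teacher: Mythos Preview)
The paper does not contain a proof of this statement: Theorem~\ref{thm:perfmatchings} is quoted directly from \cite[Thm.~5.4]{musiker_bases_2013} and used as a black box. There is therefore nothing in the present paper to compare your proposal against.

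That said, your outline is essentially the argument given in \cite{musiker_bases_2013}. The key ingredients---that $P \ominus P_-$ is a disjoint union of tile boundaries, that the set of enclosed tiles is governed by local ``twist'' constraints, and that these constraints assemble into a distributive lattice whose join-irreducibles form a path poset---are exactly what is established there (see also \cite{bailey_cluster_2019}). Your plan to match the block-by-block construction of $Q[a_1,\ldots,a_n]$ against straight runs and turns of the snake graph is the right shape for the verification, and the height formula does follow immediately once the bijection is in place. If you want to flesh this out into a self-contained proof, the main thing to add is a clean statement and proof of the fact that $P \ominus P_-$ encloses a set of tiles that can be obtained from $\emptyset$ by a sequence of single-tile twists, which is what gives the lattice/order-ideal structure; in \cite{musiker_bases_2013} this is Theorem~5.2 and the surrounding discussion.
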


This theorem motivates the following pair of definitions, which together give us a coherent definition for the $F$-polynomial of a path poset. The first, which is a slightly more general version of a similar condition in \cite{lee_cluster_2019}, ensures that the labels of the given path poset are properly realized in an appropriate cluster algebra when computing the $F$-polynomial.

\begin{definition}\label{def:realize}
A cluster variable $x_t \in \calA$ \textbf{realizes} a labeled path poset $Q(p/q)$ if
\begin{itemize}[leftmargin=2.75em]
\item $x_t$ corresponds to a labeled snake graph which corresponds to $p/q$ and
\item $\tau(i)$ is equal to the label of the $i^{\text{th}}$ vertex from the left in the Hasse diagram of $Q(p/q)$, for all $i$.
\end{itemize}
\end{definition}

\begin{definition}\label{def:f-poly}
For any labeled poset $Q$, define
\[ F(Q) := \sum_{I \leq Q} y(I) \quad\quad\text{ where }\quad\quad y(I) := \prod_{i \in I} y_i. \]
\end{definition}

\begin{proposition}\label{prop:F-poly}
If $x_t$ realizes $Q(p/q)$, then $F_t = F(Q(p/q))$.
\end{proposition}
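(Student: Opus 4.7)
The plan is to compose Theorems~\ref{thm:expansion-formula}, \ref{thm:perfmatchings}, and \ref{thm:path_poset_cf}, then reconcile the resulting labels with those of $Q(p/q)$ via the two clauses of Definition~\ref{def:realize}.

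By the first clause of Definition~\ref{def:realize}, the labeled snake graph of $x_t$ is $\calG[a_1,\ldots,a_n]$ for some positive continued fraction $[a_1,\ldots,a_n]$ of $p/q$. Applying Theorem~\ref{thm:expansion-formula} and then Theorem~\ref{thm:perfmatchings} (to convert perfect matchings into order ideals) gives
\[ F_t \;=\; \sum_{I \leq Q[a_1,\ldots,a_n]} \prod_{i \in I} y_{\tau(i)}. \]
Next, Theorem~\ref{thm:path_poset_cf} supplies a poset isomorphism $\varphi : Q[a_1,\ldots,a_n] \to Q(p/q)$. Because each Hasse diagram is a path with a natural first endpoint (the vertex $\ell_0 + 1 = 1$ in Definition~\ref{def:path_poset_cf} on one side, and the left endpoint of $Q(p/q)$ under the convention of Proposition~\ref{prop:path_posets_q} on the other), $\varphi$ must send the $i$-th vertex from the left to the $i$-th vertex from the left. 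Combined with the second clause of Definition~\ref{def:realize}, this means $\tau(i)$ is exactly the label of $\varphi(i)$ in $Q(p/q)$. Pushing the sum through $\varphi$, which is a bijection between order ideals, yields
\[ F_t \;=\; \sum_{I' \leq Q(p/q)} \prod_{v \in I'} y_{\mathrm{label}(v)} \;=\; F(Q(p/q)). \]

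No serious obstacle is expected, since the statement is essentially a composition of already-established results; the only subtle point is verifying the orientation of $\varphi$, i.e.\ that it matches the left-to-right orderings of the two Hasse diagrams so that the tile-label condition of Definition~\ref{def:realize} transfers correctly across the isomorphism. The degenerate case $p/q = 1$, where $\calG$ has no tiles and $Q(p/q) = \emptyset$, should be checked separately: both $F_t$ and $F(\emptyset)$ evaluate to $1$.
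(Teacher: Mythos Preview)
Your proposal is correct and follows exactly the route the paper intends; the paper states this proposition without proof, treating it as an immediate consequence of Theorems~\ref{thm:expansion-formula} and~\ref{thm:perfmatchings} together with Definitions~\ref{def:realize} and~\ref{def:f-poly}. One small simplification: you do not need the full strength of Theorem~\ref{thm:path_poset_cf} here, since the continued fraction coming from the snake graph is already positive, so the isomorphism $Q[a_1,\ldots,a_n] \cong Q(p/q)$ is just Lemma~\ref{lem:path_posets_pos_cf} (or, even more directly, the construction in Proposition~\ref{prop:path_posets_q}).
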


Thus we define $F[c_1,\ldots,c_n] := F(Q[c_1,\ldots,c_n])$. Shown in Figure~\ref{fig:F-poly-exs} is an example of computing the $F$-polynomial of a path poset in this way.

\begin{figure}[ht]
\setlength{\tabcolsep}{1pt}
\begin{tabular}{cccccccccc}
&$\begin{gathered}\begin{tikzpicture}[scale=0.6]
\node (1) at (1,1) {\small \textcolor{gray}{$1$}};
\node (2) at (2,2) {\small \textcolor{gray}{$2$}};
\node (3) at (3,1) {\small \textcolor{gray}{$3$}};
\draw[gray] (1) -- (2) -- (3);
\end{tikzpicture} \end{gathered}$  && $\begin{gathered}\begin{tikzpicture}[scale=0.6]
\node (1) at (1,1) {\small $\textbf{1}$};
\node (2) at (2,2) {\small \textcolor{gray}{$2$}};
\node (3) at (3,1) {\small \textcolor{gray}{$3$}};
\draw[gray] (1) -- (2) -- (3);
\end{tikzpicture} \end{gathered}$ &&
$\begin{gathered}\begin{tikzpicture}[scale=0.6]
\node (1) at (1,1) {\small \textcolor{gray}{$1$}};
\node (2) at (2,2) {\small \textcolor{gray}{$2$}};
\node (3) at (3,1) {\small $\textbf{3}$};
\draw[gray] (1) -- (2) -- (3);
\end{tikzpicture} \end{gathered}$ &&
$\begin{gathered}\begin{tikzpicture}[scale=0.6]
\node (1) at (1,1) {\small $\textbf{1}$};
\node (2) at (2,2) {\small \textcolor{gray}{$2$}};
\node (3) at (3,1) {\small $\textbf{3}$};
\draw[gray] (1) -- (2) -- (3);
\end{tikzpicture} \end{gathered}$ &&
$\begin{gathered}\begin{tikzpicture}[scale=0.6]
\node (1) at (1,1) {\small $\textbf{1}$};
\node (2) at (2,2) {\small $\textbf{2}$};
\node (3) at (3,1) {\small $\textbf{3}$};
\draw[thick] (1) -- (2) -- (3);
\end{tikzpicture} \end{gathered}$ \\
& $\emptyset$ && $\{1\}$ && $\{3\}$ && $\{1,3\}$ && $\{1,2,3\}$ \\ \\
$F[2,2] = $& $1$ &$+$& $y_1$ &$+$& $y_3$ &$+$& $y_1y_3$ &$+$& $y_1y_2y_3$
\end{tabular}
\caption{The order ideals of $Q[2,2]$ and its $F$-polynomial.}
\label{fig:F-poly-exs}
\end{figure}

\subsection{Formulae for the $F$-Polynomial}\label{ssec:f-poly-formulae}

We begin by giving an expression for the $F$-polynomial of the sub-posets $S_m$ defined in Definition~\ref{def:path_poset_cf}. Recall from Remark~\ref{rmk:Sn-order-ideals} that $\lambda_m(j)$ is the label of the $j^{\text{th}}$ vertex from the bottom of $S_m$.

\begin{lemma}\label{lem:F-poly-Sm}
For any continued fraction $[c_1,\ldots,c_n]$ which satisfies the conditions of Definition~\ref{def:path_poset_cf} and $1 \leq m \leq n$,
\[ F(S_m) = \sum_{k=0}^{|c_m|-1} \prod_{j=1}^{k} y_{\lambda_m(j)}. \]
\end{lemma}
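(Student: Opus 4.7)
The plan is to observe that $S_m$ is either empty (when $|c_m|=1$) or a chain on $|c_m|-1$ elements, and then to invoke Remark~\ref{rmk:Sn-order-ideals}, which already enumerates its order ideals explicitly. Once we have the order ideals listed, the formula follows by unfolding Definition~\ref{def:f-poly}.

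In more detail, I would first dispose of the degenerate case $|c_m|=1$, where $S_m=\emptyset$ has a unique order ideal (namely $\emptyset$), so $F(S_m)=1$, which matches the right-hand side since the outer sum has a single term ($k=0$) equal to the empty product. For $|c_m|\geq 2$, Remark~\ref{rmk:Sn-order-ideals} says the order ideals of $S_m$ are precisely the sets
\[ I_k = \{\lambda_m(j) : 1 \leq j \leq k\}, \qquad 0 \leq k \leq |c_m|-1, \]
and moreover this is a bijection between $\{0,1,\ldots,|c_m|-1\}$ and the set of order ideals of $S_m$. Substituting into Definition~\ref{def:f-poly} gives
\[ F(S_m) \;=\; \sum_{I \leq S_m} \prod_{i \in I} y_i \;=\; \sum_{k=0}^{|c_m|-1} \prod_{i \in I_k} y_i \;=\; \sum_{k=0}^{|c_m|-1} \prod_{j=1}^{k} y_{\lambda_m(j)}, \]
which is the claim.

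There is no real obstacle here: the entire content of the lemma is already packaged in Remark~\ref{rmk:Sn-order-ideals}, so the proof is essentially a one-line verification. The only thing to be slightly careful about is handling the $|c_m|=1$ case gracefully, so that both the empty poset and the empty product on the right-hand side are interpreted consistently.
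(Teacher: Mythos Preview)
Your proof is correct and follows exactly the approach in the paper, which simply states that the result follows directly from Definition~\ref{def:f-poly} and Remark~\ref{rmk:Sn-order-ideals}. Your version is just a more explicit unpacking of that one-line justification.
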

\begin{proof}
This follows directly from Definition~\ref{def:f-poly} and Remark~\ref{rmk:Sn-order-ideals}.
\end{proof}

As in \cite{lee_cluster_2019}, fix the following notation for $n \geq 1$.
\[ F_0 = F[c_1,\ldots,c_n] \quad\quad F_1 = F[c_1,\ldots,c_{n-1}] \quad\quad F_2 = F[c_1,\ldots,c_{n-2}] \]

To conclude this section, we give a recursive formula for the $F$-polynomial of a continued fraction.
This is almost identical to Lemma~6.12 in \cite{lee_cluster_2019}, the only difference being the extra $b_1 < 0$ case in \cite{lee_cluster_2019}, an artifact of the special case for $b_1 < 0$ in Definition~6.6 in \cite{lee_cluster_2019}.
Unlike the proof of Lemma~6.12 \cite{lee_cluster_2019}, which depends on technical results from \cite{canakci_snake_2013} and \cite{canakci_snake_2015}, the proof below follows mostly from visual intuition.

First, we fix the following notation for $n > 1$.
\[ Q_0 = Q[c_1,\ldots,c_n] \quad\quad Q_1 = Q[c_1,\ldots,c_{n-1}] \quad\quad Q_2 = Q[c_1,\ldots,c_{n-2}] \]

\begin{lemma}[{\cite[Lemma 6.12]{lee_cluster_2019}}]\label{lem:even-cf-rec}
Let $[c_1,\ldots,c_n]$ be any continued fraction with $n > 1$ which satisfies the conditions of Definition~\ref{def:path_poset_cf}. Then,
\[ F_0 = \begin{cases}
-F_2\prod_{i \in S_n} y_i + F_1 \,F(S_n) &\text{if $t_{n-1} = -1$, $t_n = -1$} \\[0.8em]
F_2\prod_{i \in Q_0 \setminus Q_2} y_i + F_1 \,F(S_n) &\text{if $t_{n-1} = 1$, $t_n = -1$} \\[0.8em]
F_2 + F_1 \,F(S_n)\,y_{l_{n-1}} &\text{if $t_{n-1} = -1$, $t_n = 1$} \\[0.8em]
-F_2\prod_{i \in Q_1 \setminus Q_2} y_i + F_1 \,F(S_n) &\text{if $t_{n-1} = 1$, $t_n = 1$.}
\end{cases} \]
\end{lemma}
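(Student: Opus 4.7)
The plan is to analyze the order ideals of $Q_0$ directly using its decomposition from Definition~\ref{def:path_poset_cf} as $Q_1$ joined to the chain $S_n$, with a possible new vertex $\ell_{n-1}$ between them, and then to express $F_0 = \sum_{I \leq Q_0} y(I)$ in terms of $F_1$, $F_2$, and $F(S_n)$. The central tool is an auxiliary bijection relating ideals of $Q_1$ to ideals of $Q_2$ depending on the sign $t_{n-1}$. If $t_{n-1} = -1$, then $\ell_{n-1}-1$ is the minimum of the chain $S_{n-1}$, and ideals $I \leq Q_1$ with $\ell_{n-1}-1 \notin I$ correspond weight-preservingly to ideals of $Q_2$: excluding this minimum excludes all of $S_{n-1}$ by chain closure, and when the new vertex $\ell_{n-2}$ is present (in the $t_{n-2}=1$ sub-case) it lies above $S_{n-1}$ and so is excluded too, yielding $I \subseteq Q_2$. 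If $t_{n-1} = 1$, the dual statement holds: $\ell_{n-1}-1$ is the maximum of $S_{n-1}$, so ideals $I \leq Q_1$ with $\ell_{n-1}-1 \in I$ correspond to ideals of $Q_2$ via $I \mapsto I \cap Q_2$, where all of $S_{n-1}$ and (when present in the $t_{n-2}=-1$ sub-case) the new vertex $\ell_{n-2}$ are forced into $I$, so that $y(I) = y(I \cap Q_2) \prod_{i \in Q_1 \setminus Q_2} y_i$. These bijections sum to $F_2$ and $F_2 \prod_{i \in Q_1 \setminus Q_2} y_i$ respectively.

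With this sub-lemma in hand I would treat the four cases by partitioning ideals of $Q_0$ according to the state of the connection between $Q_1$ and $S_n$. In Cases 1 and 4 ($t_{n-1} = t_n$, no new vertex), the total $F_1 F(S_n)$ counts all pairs of ideals of $Q_1$ and $S_n$, from which I subtract the pairs violating the ideal condition across the single connecting edge. In Case 1 the violating pairs have $\ell_{n-1}+1$ (the max of $S_n$) in $I$, which forces all of $S_n \subseteq I$, together with $\ell_{n-1}-1 \notin I \cap Q_1$, subtracting $F_2 \prod_{i \in S_n} y_i$ by the sub-lemma. In Case 4 the violating pairs have $I \cap S_n = \emptyset$ and $\ell_{n-1}-1 \in I \cap Q_1$, subtracting $F_2 \prod_{i \in Q_1 \setminus Q_2} y_i$. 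In Cases 2 and 3 ($t_{n-1} \neq t_n$) I partition ideals by whether $\ell_{n-1} \in I$. In Case 2, $\ell_{n-1}$ sits above both chain maxima, so $\ell_{n-1} \in I$ forces $S_{n-1}, S_n \subseteq I$ and contributes $F_2 \prod_{i \in Q_0 \setminus Q_2} y_i$ via the sub-lemma, while $\ell_{n-1} \notin I$ is unconstrained and contributes $F_1 F(S_n)$. Case 3 is dual: $\ell_{n-1}$ sits below both chain minima, so $\ell_{n-1} \notin I$ forces $I \cap S_n = \emptyset$ and $\ell_{n-1}-1 \notin I$, contributing $F_2$, while $\ell_{n-1} \in I$ is unconstrained and contributes $y_{\ell_{n-1}} F_1 F(S_n)$.

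The hardest step is the reverse direction of the sub-lemma, which amounts to the structural claim that no element of $Q_1 \setminus Q_2$ is strictly below an element of $Q_2$ in $Q_1$ (when $t_{n-1} = -1$), or dually that no element of $Q_2$ is strictly below an element of $Q_1 \setminus Q_2$ (when $t_{n-1} = 1$). This reduces to checking each of the sub-cases $t_{n-2} = \pm 1$ against the joining pictures in Definition~\ref{def:path_poset_cf}, observing in each sub-case that the unique cross-boundary edge (or the pair of edges at the new vertex $\ell_{n-2}$) points in the direction that prevents any unwanted upward path between $Q_2$ and $Q_1 \setminus Q_2$. Once this structural fact is verified, the four-case accounting above goes through directly.
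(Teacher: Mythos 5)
Your proposal is correct and follows essentially the same route as the paper: when $t_{n-1}\neq t_n$ it splits on whether the connecting vertex $\ell_{n-1}$ lies in the order ideal, and when $t_{n-1}=t_n$ it deletes the single cross edge to get $F_1\,F(S_n)$ and subtracts the violating ideals. Your auxiliary bijection between ideals of $Q_1$ (with the bottom, resp.\ top, of $S_{n-1}$ excluded, resp.\ included) and ideals of $Q_2$ is exactly what the paper uses implicitly in the phrases ``can be any order ideal of $Q_2$''; isolating it and verifying its reverse direction is a mild gain in rigor rather than a different argument.
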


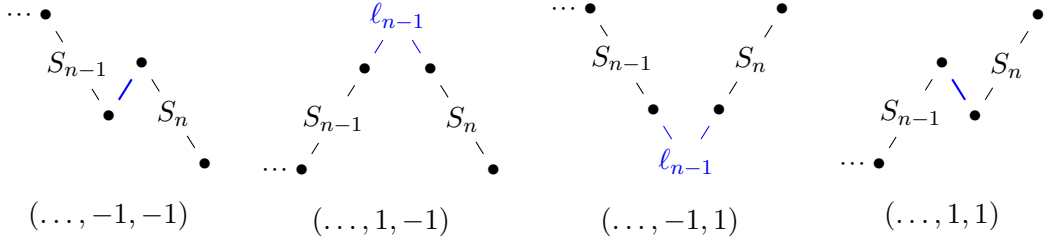
\begin{figure}[ht]
\newcommand{\sh}{0.045}
\newcommand{\tcdots}{$\cdot$\hspace{0.75pt}$\cdot$\hspace{0.75pt}$\cdot$}
\begin{align*}
\begin{gathered}\begin{tikzpicture}[scale=0.67,xscale=0.625]
\node (dim1)    at (0.15,-1)      {\small \tcdots};
\node (lip1)    at (1,-1)         {\small $\bullet$};\node (di)      at (2,-2)         {$S_{n-1}$};\node (lsim1)   at (3,-3)         {\small $\bullet$};\node (lsip1)   at (4+\sh,-2+\sh) {\small $\bullet$};\node (dsi)     at (5+\sh,-3+\sh) {$S_n$};\node (lsssim1) at (6+\sh,-4+\sh) {\small $\bullet$};\draw (lip1) -- (di) -- (lsim1);
\draw[color=blue,thick] (lsim1) -- (lsip1);
\draw (lsip1) -- (dsi) -- (lsssim1);
\end{tikzpicture} \\ (\ldots,-1,-1) \end{gathered} &&
\begin{gathered}\begin{tikzpicture}[scale=0.67,xscale=0.625]
\node (dim1)    at (0.15,1)      {\small \tcdots};
\node (lip1)    at (1,1)         {\small $\bullet$};\node (di)      at (2,2)         {$S_{n-1}$};\node (lsim1)   at (3,3)         {\small $\bullet$};\node (lsi)     at (4+\sh,4+\sh) {$\textcolor{blue}{l_{n-1}}$};         
\node (lsip1)   at (5+\sh+\sh,3) {\small $\bullet$};\node (dsi)     at (6+\sh+\sh,2) {$S_n$};\node (lsssim1) at (7+\sh+\sh,1) {\small $\bullet$};\draw (lip1) -- (di) -- (lsim1);
\draw[color=blue] (lsim1) -- (lsi) -- (lsip1);
\draw (lsip1) -- (dsi) -- (lsssim1);
\end{tikzpicture} \\ (\ldots,1,-1) \end{gathered} &&
\begin{gathered}\begin{tikzpicture}[scale=0.67,xscale=0.625]
\node (dim1)    at (0.15,-1)      {\small \tcdots};
\node (lip1)    at (1,-1)         {\small $\bullet$};\node (di)      at (2,-2)         {$S_{n-1}$};\node (lsim1)   at (3,-3)         {\small $\bullet$};\node (lsi)     at (4+\sh,-4-\sh) {$\textcolor{blue}{l_{n-1}}$};         
\node (lsip1)   at (5+\sh+\sh,-3) {\small $\bullet$};\node (dsi)     at (6+\sh+\sh,-2) {$S_n$};\node (lsssim1) at (7+\sh+\sh,-1) {\small $\bullet$};\draw (lip1) -- (di) -- (lsim1);
\draw[color=blue] (lsim1) -- (lsi) -- (lsip1);
\draw (lsip1) -- (dsi) -- (lsssim1);
\end{tikzpicture} \\ (\ldots,-1,1) \end{gathered} &&
\begin{gathered}\begin{tikzpicture}[scale=0.67,xscale=0.625]
\node (dim1)    at (0.15,1)      {\small \tcdots};
\node (lip1)    at (1,1)         {\small $\bullet$};\node (di)      at (2,2)         {$S_{n-1}$};\node (lsim1)   at (3,3)         {\small $\bullet$};\node (lsip1)   at (4+\sh,2-\sh) {\small $\bullet$};\node (dsi)     at (5+\sh,3-\sh) {$S_n$};\node (lsssim1) at (6+\sh,4-\sh) {\small $\bullet$};\draw (lip1) -- (di) -- (lsim1);
\draw[color=blue,thick] (lsim1) -- (lsip1);
\draw (lsip1) -- (dsi) -- (lsssim1);
\end{tikzpicture} \\ (\ldots,1,1) \end{gathered}
\end{align*}
\caption{The four ways $S_{n-1}$ and $S_n$ can be connected, with $\type[c_1,\ldots,c_n]$ shown below.}
\label{fig:poset-conns}
\end{figure}

\begin{proof}
Recall from Definition~\ref{def:f-poly} that $F_0$ is the sum of the heights of all order ideals $I \leq Q_0$. We will also use the fact that over a disjoint union $F(P \cup P') = F(P)\,F(P')$.

Suppose $t_{n-1} = 1$ and $t_n = -1$. The sub-posets $S_{n-1}$ and $S_n$ are connected as shown in the second picture from the left in Figure~\ref{fig:poset-conns}. Any order ideal $I \leq Q_0$ which contains $l_{n-1}$ must also contain everything not in $Q_2$, meaning all of $S_{n-1}$ and $S_n$, as well as any connecting vertex that may exist between $S_{n-1}$ and $Q_2$. Since the remainder of $I$ can be any order ideal of $Q_2$, the sum of the heights of all such order ideals is exactly $F_2\prod_{i \in Q_0 \setminus Q_2} y_i$. This is the first term in the desired expression for $F_0$. Any order ideal $I \leq Q_0$ which does not contain $l_{n-1}$ must be the union of some order ideal of $Q_1$ and some order ideal of $S_n$. Thus, the sum over the heights of these order ideals contributes the remaining $F_1\,F(S_n)$ term.

The case of $t_{n-1} = -1$ and $t_n = 1$, shown in the third picture form the left in Figure~\ref{fig:poset-conns}, is similar. Any order ideal $I \leq Q_0$ which does not contain $l_{n-1}$ cannot contain any elements of $S_{n-1}$, $S_n$, or any connecting vertex that may exist between $S_{n-1}$ and $Q_2$. Thus $I$ must be an order ideal of $Q_2$, and can be any order ideal of $Q_2$. Thus the sum of the heights of all such order ideals contributes the $F_2$ term in the desired expression for $F_0$. Any order ideal $I \leq Q_0$ which does contain $l_{n-1}$ otherwise consists of the union of some order ideal of $Q_1$ and some order ideal of $S_n$. This contributes the remaining $F_1\,F(S_n)\,y_{l_{n-1}}$ term.

Suppose $t_{n-1} = -1$ and $t_n = -1$, shown in the leftmost picture in Figure~\ref{fig:poset-conns}. Let $Q_0'$ be the poset $Q_0$ without the highlighted relation connecting $S_{n-1}$ and $S_n$. The only order ideals of $Q_0'$ which are not order ideals of $Q_0$ are those which contain the topmost vertex of $S_n$ (and therefore all of $S_n$) and do not contain the bottom-most vertex of $S_{n-1}$ (and therefore none of $S_{n-1}$). The sum of the heights of all such order ideals is exactly $F_2\prod_{i \in S_n} y_i$; subtracting this from $F(Q_0') = F(Q_1)\,F(S_n)$ gives us the desired expression for $F_0$. 

The case of $t_{n-1} = 1$ and $t_n = 1$, shown in the rightmost picture in Figure~\ref{fig:poset-conns}, is similar. The only order ideals of $Q_0'$ which are not order ideals of $Q_0$ are those which contain the topmost vertex of $S_{n-1}$ (and therefore all of $Q_1 \setminus Q_2$) and do not contain the bottom-most vertex of $S_n$ (and therefore none of $S_n$). Following the previous case, this gives us the desired expression for $F_0$.
\end{proof}

\section{Main Results}\label{sec:homfly_spec}

This section contains the proof of Theorem~\ref{thm:B}. This proof has two instances of induction, one on the length of the continued fraction $n$ and one on $|b_1|$ in the base case of $n = 1$. The inductive step on $n$ reduces to mechanically checking the compatability of Lemma~\ref{lem:homfly-rec}, Definition~\ref{def:m}, and Corollary~\ref{cor:FP-rec} in various cases of the type sequence. Lemma~\ref{lem:Sn-Fpoly} powers Corollary~\ref{cor:FP-rec}, and by extension, this section of the proof. The most interesting part of the main proof is the base case of $|b_1| = 2$ and $n = 1$ (and to a lesser extent, the inductive step on $|b_1|$ when $b_1 > 0$) where due to the fact that the fraction $P[b_1,\ldots,b_{-1}]$ cannot be written as a polynomial, the strange choice of specialization for $y_1$ is needed.

\subsection{A Specialization for the HOMFLY Polynomial}\label{ssec:homfly_spec}

As in the statement of Theorem~\ref{thm:B}, we will consider the following specialization of the $F$-polynomial.
\[ \tilde{F}[b_1,\ldots,b_n] := \varphi_P(F[b_1,\ldots,b_n]) \in \bbZ(l,q\half) \enspace\text{ where} \]\begin{align*}
\varphi_P(y_1) &= l^2\left(\frac{1 - l^2q}{1 - q\inv}\right)\inv \text{ if $b_1 > 0$} \enspace\text{ or }\enspace q^{-2}\left(\frac{1 - l^2q}{1 - q\inv}\right) \text{ if $b_1 < 0$,}
\end{align*}
\begin{align*}
\varphi_P(y_{2i}) &= -l^2q, & \varphi_P(y_{2i+1}) &= -q\inv & &\text{for all } i \geq 1.
\end{align*}

Under the substitutions in Theorem~\ref{thm:homfly-to-jones}, this is exactly the specialization of the Jones polynomial given in \cite{lee_cluster_2019}. In particular, the term
\[ w := \frac{1-l^2q}{1-q\inv} = \frac{1+\varphi_P(y_{\text{even}})}{1+\varphi_P(y_{\text{odd}})} \quad\quad\text{ becomes }\quad\quad \frac{1-t\inv}{1-t\inv} = 1. \]

First, as in Section~\ref{ssec:homfly_formulae}, we fix the following notation for $n > 1$.
\[ \tilde{F}_0 = \tilde{F}[a_1,\ldots,a_n] \quad\quad \tilde{F}_1 = \tilde{F}[a_1,\ldots,a_{n-1}] \quad\quad \tilde{F}_2 = \tilde{F}[a_1,\ldots,a_{n-2}] \]
We will also use the following notation, which can be found in \cite{lee_cluster_2019}.
\begin{equation*}
[k]_x = \frac{1-x^k}{1-x} = 1 + x + x^2 + \cdots + x^{k-1}
\end{equation*}

Our first goal will be to apply our specialization $\varphi_P$ to Lemma~\ref{lem:even-cf-rec}. In order to do so we will need to know the value of $\varphi_P$ applied to the expressions $F(S_m)$ and $\prod_{i \in S_m} y_i$ which appear in Lemma~\ref{lem:even-cf-rec}. This is established by following lemma. 
As stated earlier, is it this lemma that entirely depends on the nice behavior of the labels in $Q[b_1,\ldots,b_n]$ (Remark~\ref{rmk:Sn-labels}).

\begin{lemma}\label{lem:Sn-Fpoly}
For any $[b_1,\ldots,b_n]$ and $1 < m \leq n$,
\begin{align*}
\text{(i)} && \tilde{F}(S_m) &= (1-q\inv) \big[|b_m|/2\big]_{l^2} = (1-q\inv)(1 + l^2 + \cdots + l^{|b_m|-2}), \\
\text{(ii)} && \prod_{i \in S_m} \varphi_P(y_i) &= -l^{|b_m|-2}q\inv.
\end{align*}
\end{lemma}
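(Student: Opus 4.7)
The plan is to combine Lemma~\ref{lem:F-poly-Sm}, Remark~\ref{rmk:Sn-order-ideals}, and Remark~\ref{rmk:Sn-labels} into a routine computation, so both parts reduce to geometric-series manipulations once the right bookkeeping is set up.

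First I would observe that because $m > 1$, every label $\lambda_m(j)$ with $1 \leq j \leq |b_m|-1$ satisfies $\lambda_m(j) \geq \ell_{m-1} + 1 \geq 3$ (in both the $t_m = 1$ and $t_m = -1$ cases of Remark~\ref{rmk:Sn-order-ideals}). Thus the special specialization of $y_1$ never appears, and we only need the rules $\varphi_P(y_{2i}) = -l^2q$ and $\varphi_P(y_{2i+1}) = -q\inv$. Since $[b_1,\ldots,b_n]$ is even (so each $\ell_i$ is even), Remark~\ref{rmk:Sn-labels} gives $\lambda_m(j) \equiv j \pmod 2$, hence
\[
\varphi_P(y_{\lambda_m(j)}) = \begin{cases} -q\inv & \text{if $j$ odd,} \\ -l^2q & \text{if $j$ even.}\end{cases}
\]

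For (i), I would apply Lemma~\ref{lem:F-poly-Sm} to write $\tilde F(S_m) = \sum_{k=0}^{|b_m|-1} \prod_{j=1}^{k} \varphi_P(y_{\lambda_m(j)})$, and then split the sum by parity of $k$. For $k = 2r$ the inner product pairs up $r$ odd-indexed and $r$ even-indexed factors, giving $(-q\inv)^r(-l^2q)^r = l^{2r}$; for $k = 2r+1$ one extra odd-indexed factor appears, giving $-q\inv\, l^{2r}$. Letting $r$ run from $0$ to $|b_m|/2-1$ in each case (using that $|b_m|$ is even) yields
\[
\tilde F(S_m) = \sum_{r=0}^{|b_m|/2-1} \bigl(1 - q\inv\bigr)\, l^{2r} = (1-q\inv)\bigl[|b_m|/2\bigr]_{l^2},
\]
which is exactly (i).

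For (ii), I would just take $k = |b_m|-1$ (which is odd) in the same analysis: among $j = 1,\dots,|b_m|-1$ there are $|b_m|/2$ odd indices and $|b_m|/2 - 1$ even indices, so
\[
\prod_{i \in S_m} \varphi_P(y_i) = (-q\inv)^{|b_m|/2}(-l^2q)^{|b_m|/2 - 1} = (-1)^{|b_m|-1}\,l^{|b_m|-2}\,q\inv = -l^{|b_m|-2}q\inv,
\]
since $|b_m|-1$ is odd.

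The only subtlety is checking the hypothesis on labels at the start, i.e.\ that we never hit $i = 1$ in $S_m$ for $m > 1$; this is immediate from $\lambda_m(j) \geq \ell_{m-1}+1$ and $\ell_{m-1} \geq |b_1| \geq 2$. Everything else is a direct count of odd versus even positions, made clean by Remark~\ref{rmk:Sn-labels}.
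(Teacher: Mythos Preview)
Your proof is correct and follows essentially the same approach as the paper's: both reduce to the observation from Remark~\ref{rmk:Sn-labels} that $\varphi_P(y_{\lambda_m(j)})$ depends only on the parity of $j$, then compute the partial products and sum them. The paper packages the computation as a two-step recurrence (multiplying by one factor, then by two) before summing, whereas you count odd and even indices directly; these are cosmetic variations of the same argument.
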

\begin{proof}
Since $|b_1| \geq 2$ and $m > 1$, we have $l_{m-1} > 1$ and therefore $\lambda_m(j) > 1$ for all $1 \leq j \leq |b_m|-1$. Remark~\ref{rmk:Sn-labels} tells us that the parity of $\lambda_m(j)$ is always the same as the parity of $j$, and so we have the following for all $i$.
\begin{equation}\label{eq:lem:Sn-Fpoly_1}
\prod_{j=1}^{i+1} \varphi_P(y_{\lambda_m(j)}) = \begin{cases}
-l^2q \displaystyle\prod_{j=1}^{i} \varphi_P(y_{\lambda_m(j)}) &\text{if $i$ odd}\\
-q\inv \displaystyle\prod_{j=1}^{i} \varphi_P(y_{\lambda_m(j)}) &\text{if $i$ even.}
\end{cases}
\end{equation}
Iterating this once gives us the following, regardless of the parity of $i$.
\begin{equation}\label{eq:lem:Sn-Fpoly_2}
\prod_{j=1}^{i+2} \varphi_P(y_{\lambda_m(j)}) = l^2 \prod_{j=1}^{i} \varphi_P(y_{\lambda_m(j)})
\end{equation}
Thus, by iterating \eqref{eq:lem:Sn-Fpoly_2} we get the following for any $k$.
\begin{align*}
\prod_{j=1}^{2k+1} \varphi_P(y_{\lambda_m(j)}) = -l^{2k}q\inv &&
\prod_{j=1}^{2k} \varphi_P(y_{\lambda_m(j)}) = l^{2k},
\end{align*}
which when used with Lemma~\ref{lem:F-poly-Sm}, give us that
\begin{align*}
\tilde{F}(S_m) = \sum_{i=0}^{|b_m|-1} \prod_{j=1}^i \varphi_P(y_{\lambda_m(j)})
&= \sum_{k=0}^{|b_m|/2-1} \left( \prod_{j=1}^{2k} \varphi_P(y_{\lambda_m(j)}) + \prod_{j=1}^{2k+1} \varphi_P(y_{\lambda_m(j)}) \right) \\
&= \sum_{k=0}^{|b_m|/2-1} (1-q\inv)\,l^{2k} = (1-q\inv) \big[|b_m|/2\big]_{l^2}
\end{align*}
\begin{align*}
\text{and }\;\;& \prod_{i \in S_m}\varphi_P(y_i) = \prod_{j=1}^{|b_m|-1} \varphi_P(y_{\lambda_m(j)}) = -\ell^{|b_m|-2}q\inv. \qedhere
\end{align*}
\end{proof}

\begin{remark}\label{rmk:S1-Fpoly}
If $b_1 > 0$, $\lambda_1(j) = j$. Thus equation \eqref{eq:lem:Sn-Fpoly_2} still holds when $m = 1$, but only for $i > 0$. Iterating \eqref{eq:lem:Sn-Fpoly_2} and using the fact that $\varphi_P(y_{\lambda_1(1)}) = l^2w\inv$, we get that in this case
\[ \prod_{i \in S_1} \varphi_P(y_i) = l^{|b_1|}w\inv. \]
\end{remark}

We can now apply our specialization to Lemma~\ref{lem:even-cf-rec}. The proof of this corollary is completely mechanical.

\begin{corollary}\label{cor:FP-rec}
Let $[b_1,\ldots,b_n]$ be any even continued fraction with $n > 1$. Then
\[ \tilde{F}_0 = \mu(l,q)\, \tilde{F}_2 + \nu(l,q)\, (1 - q\inv) \big[|b_n|/2\big]_{l^2}\, \tilde{F}_1, \]
where the values of $\mu(l,q)$ and $\nu(l,q)$ depend on $\type[b_1,\ldots,b_n]$ as given in the following table.
\begin{center}
{\def\arraystretch{1.4}
\begin{tabular}{|c|c|r|}
\hline $\mu(l,q)$ & $\nu(l,q)$ & $\type[b_1,\ldots,b_n]$ \\\hline
$l^{|b_n|-2} q\inv $ & $1$ &$(\ldots,-1,-1)$ \\
$l^{|b_2|+|b_1|} w\inv $ & $1$ &$(1,-1)$ \\
$l^{|b_n|+|b_{n-1}|} $ & $1$ &$(\ldots,-1,1,-1)$ \\
$-l^{|b_n|+|b_{n-1}|-2}q\inv $ & $1$ &$(\ldots,1,1,-1)$ \\
$1$ & $-l^2q$ &$(\ldots,-1,1)$ \\
$-l^{|b_1|}w\inv$ & $1$ &$(1,1)$ \\
$-l^{|b_{n-1}|}$ & $1$ &$(\ldots,-1,1,1)$ \\
$l^{|b_{n-1}|-2}q\inv$ & $1$ &$(\ldots,1,1,1)$ \\\hline
\end{tabular}}\end{center}
\end{corollary}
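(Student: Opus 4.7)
The plan is to apply the specialization $\varphi_P$ termwise to each of the four cases of Lemma~\ref{lem:even-cf-rec} and use Lemma~\ref{lem:Sn-Fpoly}(i) to uniformly rewrite $\varphi_P(F(S_n))$ as $(1-q\inv)[|b_n|/2]_{l^2}$; this expression is exactly the coefficient of $\nu(l,q)\,\tilde{F}_1$ in every row of the table, so after this step only $\mu(l,q)$ remains to be computed. The value of $\mu(l,q)$ will come from evaluating the various products $\prod y_i$ that appear in Lemma~\ref{lem:even-cf-rec}, using Lemma~\ref{lem:Sn-Fpoly}(ii) and Remark~\ref{rmk:S1-Fpoly}, together with the observation that for even continued fractions $l_i$ is even and $\geq 2$ whenever $1 \leq i < n$, so $\varphi_P(y_{l_i}) = -l^2 q$.

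Two of the four cases of Lemma~\ref{lem:even-cf-rec} translate directly into one table row each. Case one ($t_{n-1} = t_n = -1$) together with $\prod_{i \in S_n} \varphi_P(y_i) = -l^{|b_n|-2}q\inv$ yields $\mu = l^{|b_n|-2}q\inv$ and $\nu = 1$. Case three ($t_{n-1} = -1$, $t_n = 1$) gives $\mu = 1$ and $\nu = -l^2 q$ after substituting $\varphi_P(y_{l_{n-1}}) = -l^2 q$.

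The remaining two cases of the lemma, $(t_{n-1},t_n) = (1,-1)$ and $(1,1)$, each split into three table rows. This splitting reflects the structure of $Q_0 \setminus Q_2$ (respectively $Q_1 \setminus Q_2$) under Definition~\ref{def:path_poset_cf}: the set always contains the sub-poset $S_{n-1}$ (and in the $(1,-1)$ case also $S_n$ and the connecting vertex $l_{n-1}$), and it contains the additional connecting vertex $l_{n-2}$ precisely when $n > 2$ and $t_{n-2} \neq t_{n-1}$, i.e.\ when $t_{n-2} = -1$. When instead $n = 2$, we have $S_{n-1} = S_1$, and Remark~\ref{rmk:S1-Fpoly} replaces the usual ``$S_{n-1}$-product'' $-l^{|b_1|-2}q\inv$ by $l^{|b_1|}w\inv$; this is the origin of the $w\inv$ factor in the two $n = 2$ rows of the table.

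The only genuine obstacle is careful bookkeeping. Each of the six split rows is verified by multiplying together three or four factors of the forms $-l^{|b_m|-2}q\inv$, $-l^2 q$, and (when $n = 2$) $l^{|b_1|}w\inv$, and then matching the resulting monomial with the stated $\mu(l,q)$. No subtle algebraic manipulation beyond this is required; what remains is genuinely mechanical verification in eight cases.
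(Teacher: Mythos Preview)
Your proposal is correct and follows essentially the same approach as the paper: apply $\varphi_P$ to each case of Lemma~\ref{lem:even-cf-rec}, handle the $\tilde F_1$ coefficient via Lemma~\ref{lem:Sn-Fpoly}(i) and $\varphi_P(y_{l_{n-1}})=-l^2q$, and compute $\mu(l,q)$ by decomposing $Q_0\setminus Q_2$ (resp.\ $Q_1\setminus Q_2$) into $S_{n-1}$, $S_n$, and the relevant connecting vertices, using Lemma~\ref{lem:Sn-Fpoly}(ii) generically and Remark~\ref{rmk:S1-Fpoly} when $n=2$. Your explanation of why the $(1,-1)$ and $(1,1)$ cases each split into three table rows according to the presence or absence of the vertex $l_{n-2}$ is exactly the paper's reasoning.
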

\begin{proof}
By Lemma~\ref{lem:even-cf-rec} and Lemma~\ref{lem:Sn-Fpoly}~(i), we only need to show that $\mu(l,q)$ and $\nu(l,q)$ are correctly defined. In the type $(\ldots,-1,1)$ case, this follows from the fact that each $l_i$ is even and therefore $\varphi_P(y_{l_{n-1}}) = -l^2q$. In the remaining cases, we only need to check $\mu(l,q)$.

In type $(\ldots,-1,-1)$, we need to check that $-\prod_{i\in S_n} \varphi_P(y_i) = l^{|b_n|-2}q\inv$. This is exactly Lemma~\ref{lem:Sn-Fpoly}~(ii).

In type $(1,-1)$, we need to check that $\prod_{i\in Q_0\setminus Q_2} \varphi_P(y_i) = l^{|b_2|+|b_1|}w\inv$. Since $t_1 \neq t_2$, there is a vertex $l_1$ connecting $S_1$ and $S_2$. Thus, $Q_0 \setminus Q_2 = S_1 \cup \{l_1\} \cup S_2$, and so by Remark~\ref{rmk:S1-Fpoly} and Lemma~\ref{lem:Sn-Fpoly}~(ii),
\begin{align*}
\prod_{i\in Q_0\setminus Q_2} \varphi_P(y_i)
&= \left( \prod_{i\in S_1} \varphi_P(y_i) \right) \varphi_P(y_{l_1}) \left( \prod_{i\in S_2} \varphi_P(y_i) \right) \\
&= \left( l^{|b_1|} w\inv \right) (-l^2q) \left( -l^{|b_2|-2}q\inv \right) \\ &= l^{|b_2|+|b_1|}w\inv.
\end{align*}

In type $(\ldots,-1,1,-1)$, we need to check that $\prod_{i\in Q_0\setminus Q_2} \varphi_P(y_i) = l^{|b_n|+|b_{n-1}|}$. Since $t_{n-1} \neq t_n$, there is a vertex $l_n$ connecting $S_{n-1}$ and $S_n$, and since $t_{n-2} \neq t_{n-1}$, there is a vertex $l_{n-1}$ connecting $S_{n-2}$ and $S_{n-1}$. Thus, $Q_0 \setminus Q_2 = \{l_{n-2}\} \cup S_{n-1} \cup \{l_{n-1}\} \cup S_{n}$, and so by Lemma~\ref{lem:Sn-Fpoly}~(ii),
\begin{align*}
\prod_{i\in Q_0\setminus Q_2} \varphi_P(y_i)
&= \varphi_P(y_{l_{n-2}}) \left( \prod_{i\in S_{n-1}} \varphi_P(y_i) \right) \varphi_P(y_{l_{n-1}}) \left( \prod_{i\in S_{n}} \varphi_P(y_i) \right) \\
&= (-l^2q) \left( -l^{|b_{n-1}|-2}q\inv \right) (-l^2q) \left( -l^{|b_{n}|-2}q\inv \right) \\ &= l^{|b_n|+|b_{n-1}|}.
\end{align*}

In type $(\ldots,1,1,-1)$, we need to check that $\prod_{i\in Q_0\setminus Q_2} \varphi_P(y_i) = -l^{|b_n|+|b_{n-1}|-2}q\inv$. Since $t_{n-1} \neq t_n$, there is a vertex $l_n$ connecting $S_{n-1}$ and $S_n$, and since $t_{n-2} = t_{n-1}$, there is no vertex connecting $S_{n-2}$ and $S_{n-1}$. Thus, $Q_0 \setminus Q_2 = S_{n-1} \cup \{l_{n-1}\} \cup S_{n}$, and so by Lemma~\ref{lem:Sn-Fpoly}~(ii),
\begin{align*}
\prod_{i\in Q_0\setminus Q_2} \varphi_P(y_i)
&= \left( \prod_{i\in S_{n-1}} \varphi_P(y_i) \right) \varphi_P(y_{l_{n-1}}) \left( \prod_{i\in S_{n}} \varphi_P(y_i) \right) \\
&= \left( -l^{|b_{n-1}|-2}q\inv \right) (-l^2q\inv) \left( -l^{|b_{n}|-2}q\inv \right) \\ &= -l^{|b_n|+|b_{n-1}|-2}q\inv.
\end{align*}

In type $(1,1)$, we need to check that $-\prod_{i\in Q_1\setminus Q_2} \varphi_P(y_i) = -l^{|b_1|}w\inv$. Since in this case $Q_1 \setminus Q_2 = S_1$, this is exactly Remark~\ref{rmk:S1-Fpoly}.

In type $(\ldots,-1,1,1)$, we need to check that $-\prod_{i\in Q_1\setminus Q_2} \varphi_P(y_i) = -l^{|b_{n-1}|}$. Since $t_{n-2} \neq t_{n-1}$, there is a vertex $l_{n-1}$ connecting $S_{n-2}$ and $S_{n-1}$. Thus, $Q_1 \setminus Q_2 = \{l_{n-2}\} \cup S_{n-1}$, and so by Lemma~\ref{lem:Sn-Fpoly}~(ii),
\begin{align*}
-\prod_{i\in Q_1\setminus Q_2} \varphi_P(y_i)
&= -\varphi_P(y_{l_{n-2}}) \left( \prod_{i\in S_{n-1}} \varphi_P(y_i) \right)  \\
&= -(-l^2q) \left( -l^{|b_{n-1}|-2}q\inv \right) \\
&= -l^{|b_{n-1}|}.
\end{align*}

In type $(\ldots,1,1,1)$, we need to check that $\prod_{i\in Q_1\setminus Q_2} \varphi_P(y_i) = l^{|b_{n-1}|-2}q\inv$. Since $t_{n-2} = t_{n-1}$, there is no vertex connecting $S_{n-2}$ and $S_{n-1}$. Thus, $Q_1 \setminus Q_2 = S_{n-1}$, and so this is exactly Lemma~\ref{lem:Sn-Fpoly}~(ii).
\end{proof}

For the inductive step of the $n = 1$ case of Theorem~\ref{thm:B}, we will also need the following fact.

\begin{lemma}\label{lem:FP-rec-1}
For any even $b_1 \neq 0$,
\[ \tilde{F}[b_1 + 2\,\sgn(b_1)] = \begin{cases}
\tilde{F}[b_1] + l^{b_1+2}w\inv(1-q) &\text{ if $b_1 > 0$} \\
l^2\,\tilde{F}[b_1] + 1 - q\inv &\text{ if $b_1 < 0$.}
\end{cases} \]
\end{lemma}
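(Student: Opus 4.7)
The plan is to use the fact that $Q[b_1] = S_1$, so Lemma~\ref{lem:F-poly-Sm} provides the explicit expansion
\[ \tilde{F}[b_1] = \sum_{k=0}^{|b_1|-1} \prod_{j=1}^{k} \varphi_P(y_{\lambda_1(j)}), \]
and then to split into the two sign cases. These behave rather differently because $\lambda_1(j)$ runs bottom-to-top through $1, 2, \ldots, |b_1|-1$ when $b_1 > 0$ but through $|b_1|-1, |b_1|-2, \ldots, 1$ when $b_1 < 0$.

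For $b_1 > 0$, I would observe that $\tilde{F}[b_1+2]$ is obtained from $\tilde{F}[b_1]$ by simply appending the two new summands at $k = b_1$ and $k = b_1 + 1$, since the labels of the original vertices are unchanged. Remark~\ref{rmk:S1-Fpoly} already gives $\prod_{j=1}^{b_1-1}\varphi_P(y_j) = l^{b_1}w\inv$, and multiplying by the two extra factors $\varphi_P(y_{b_1}) = -l^2 q$ and $\varphi_P(y_{b_1+1}) = -q\inv$ (both generic, since $b_1 \geq 2$) produces the new summands $-l^{b_1+2}qw\inv$ and $l^{b_1+2}w\inv$, which sum to $l^{b_1+2}w\inv(1-q)$.

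For $b_1 < 0$, passing from $b_1$ to $b_1 - 2$ enlarges the summation range by two, with the two new vertices appearing at the \emph{bottom} of $S_1$, labeled $|b_1|+1$ and $|b_1|$. Since $|b_1| \geq 2$, both factors $\varphi_P(y_{|b_1|+1}) = -q\inv$ and $\varphi_P(y_{|b_1|}) = -l^2 q$ are generic, with combined contribution $l^2$. This gives a shift identity
\[ \prod_{j=1}^{k+2}\varphi_P\bigl(y_{|b_1|+2-j}\bigr) = l^2 \prod_{j=1}^{k}\varphi_P\bigl(y_{|b_1|-j}\bigr) \qquad (k \geq 0), \]
and splitting off the $k=0$ and $k=1$ summands of $\tilde{F}[b_1-2]$ (namely $1$ and $-q\inv$) and applying this identity to the remaining terms yields $\tilde{F}[b_1-2] = 1 - q\inv + l^2 \tilde{F}[b_1]$.

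The only delicate point, and where I would focus attention, is the interaction with the exceptional value $\varphi_P(y_1)$, which is not governed by the generic $y_{2i}/y_{2i+1}$ pattern. For $b_1 > 0$ this $y_1$ sits at the very bottom of $S_1$ and has already been absorbed into Remark~\ref{rmk:S1-Fpoly}; for $b_1 < 0$ it sits at the very top of $S_1$ (at index $j = |b_1|-1$) and is carried through the shift identity unchanged, so it never interacts with the two generic factors being added at the bottom. I would also directly verify the degenerate case $b_1 = \pm 2$ (where $S_1$ is a single vertex labeled $1$) to confirm that nothing collapses when the special index $y_1$ is essentially the only factor in play.
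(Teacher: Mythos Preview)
Your proposal is correct and follows essentially the same approach as the paper: both cases use Lemma~\ref{lem:F-poly-Sm} to expand $F[b_1]$ and $F[b_1 \pm 2]$, isolate the two new summands (at the top for $b_1>0$, at the bottom for $b_1<0$), and then specialize. The only differences are cosmetic: for $b_1>0$ the paper factors the two extra terms as $\bigl(\prod_{i=1}^{b_1+1}y_i\bigr)(1+y_{b_1+1}^{-1})$ and applies Remark~\ref{rmk:S1-Fpoly} to the full product, while you build up from $\prod_{j=1}^{b_1-1}\varphi_P(y_j)$; for $b_1<0$ the paper writes the identity $F[b_1-2] = F[b_1]\,y_{|b_1|}y_{|b_1|+1} + y_{|b_1|+1} + 1$ at the unspecialized level first, whereas you phrase the same thing as a shift identity after specializing.
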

\begin{proof}
First suppose $b_1 > 0$. It follows from Lemma~\ref{lem:F-poly-Sm} that
\begin{align*}
F[b_1+2] &= F[b_1] + \textstyle\prod_{i=1}^{b_1} y_i + \prod_{i=1}^{b_1+1} y_i \\
         &= F[b_1] + \textstyle\left(\prod_{i=1}^{b_1+1} y_i\right)(1 + y_{b_1+1}\inv).
\end{align*}
The fact that $\varphi_P\left(\textstyle\prod_{i=1}^{b_1+1} y_i\right) = l^{b_1+2}w\inv$ (Remark~\ref{rmk:S1-Fpoly}) and $(b_1+1) > 1$ is odd gives us the desired equality. 
If $b_1 < 0$, it follows from Lemma~\ref{lem:F-poly-Sm} that
\[ F[b_1-2] = F[b_1]\,y_{|b_1|}\, y_{|b_1-1|} + y_{|b_1-1|} + 1. \]
The fact that $|b_1| > 1$ is even and $|b_1-1| > 1$ is odd gives us the desired equality.
\end{proof}

We are now ready to prove Theorem~\ref{thm:B}, which equates the HOMFLY polynomial $P[b_1,\ldots,b_n]$ to the specialized $F$-polynomial scaled by $m[b_1,\ldots,b_n]$ (Definition~\ref{def:m}). We restate this theorem below.

{\renewcommand{\thetheorem}{\ref{thm:B}}
\begin{theorem}
For all even continued fractions $[b_1,\ldots,b_n]$ with $n \geq 0$,
\[ P[b_1,\ldots,b_n] = m[b_1,\ldots,b_n]\, \tilde{F}[b_1,\ldots,b_n]. \]
\end{theorem}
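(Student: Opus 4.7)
The plan is to prove Theorem~\ref{thm:B} by induction on $n$, the length of the continued fraction, with a nested induction on $|b_1|$ in the $n = 1$ base case. The outer base case $n = 0$ is immediate: $P[\,] = P(\unknot) = 1$ while $m[\,] = 1$ and $\tilde{F}[\,] = 1$, the latter because the empty path poset has only the empty order ideal.

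For the main inductive step $n > 1$, the strategy is to combine Lemma~\ref{lem:homfly-rec} with Corollary~\ref{cor:FP-rec} and, using the inductive hypotheses $P_1 = m_1\,\tilde{F}_1$ and $P_2 = m_2\,\tilde{F}_2$, reduce the desired identity $P_0 = m_0\,\tilde{F}_0$ to the pair of scalar equalities
\[ l^{-t_n|b_n|} = \frac{m_0\,\mu(l,q)}{m_2}, \qquad (q\half - q\nhalf)\,\frac{1 - l^{-t_n|b_n|}}{l - l\inv} = \frac{m_0\,\nu(l,q)\,(1-q\inv)\,[|b_n|/2]_{l^2}}{m_1}. \]
These must be checked in each of the eight cases for $\type[b_1,\ldots,b_n]$ tabulated in Corollary~\ref{cor:FP-rec}. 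In each case, one reads off $m_0/m_1$ and $m_0/m_2$ directly from Definition~\ref{def:m}, invoking Remark~\ref{rmk:m} when $t_{n-1} = -1$ and $t_n = 1$, and verifies the identities by elementary algebra using the factorizations $(q\half - q\nhalf)/(1-q\inv) = q\half$ and $(1-q\inv)\,[k]_{l^2} = (1-q\inv)(1-l^{2k})/(1-l^2)$.

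For the $n = 1$ base case, I would induct on $|b_1|$. The inner base case $|b_1| = 2$ amounts to directly verifying $P[\pm 2] = m[\pm 2]\,\tilde{F}[\pm 2]$ for the Hopf link (with its canonical orientation from Proposition~\ref{prop:rational-ori}), where $\tilde{F}[\pm 2] = 1 + \varphi_P(y_1)$ equals either $1 + l^2 w\inv$ or $1 + q^{-2} w$ depending on the sign of $b_1$. For the inner inductive step $|b_1| \to |b_1| + 2\sgn(b_1)$, I would combine Lemma~\ref{lem:homfly-rec-1} with Lemma~\ref{lem:FP-rec-1} and the matching clauses of Definition~\ref{def:m}, handling the $b_1 > 0$ and $b_1 < 0$ cases separately; the $b_1 > 0$ case is slightly more delicate because of the extra $w\inv$ factor inherited from $\varphi_P(y_1)$ in Lemma~\ref{lem:FP-rec-1}.

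The main obstacle is the inner base case $|b_1| = 2$. Writing the Hopf-link formula via Lemma~\ref{lem:homfly-rec} requires feeding in $P_2 = P[b_1,\ldots,b_{-1}]$, which by Corollary~\ref{cor:homfly-unknot} equals $(l - l\inv)/(q\half - q\nhalf)$, a genuine rational function rather than a Laurent polynomial. The normalization $m[b_1,\ldots,b_{-1}] = -l\inv q\nhalf\,w$ and the specialization $\varphi_P(y_1) = l^2 w\inv$ (or $q^{-2} w$) are engineered precisely so that the $w$ factors cancel against the pole of $P_2$, leaving a Laurent polynomial on the right-hand side that matches $m[\pm 2]\,\tilde{F}[\pm 2]$. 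Tracking this cancellation carefully, and keeping the two sign conventions for $b_1$ consistent throughout, is the delicate bookkeeping task; once the $|b_1| = 2$ identity is established, the remainder of the argument is algebraically routine.
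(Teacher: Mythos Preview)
Your proposal is correct and follows essentially the same route as the paper: outer induction on $n$ using Lemma~\ref{lem:homfly-rec} and Corollary~\ref{cor:FP-rec} with the eight type cases, and for $n=1$ an inner induction on $|b_1|$ via Lemma~\ref{lem:homfly-rec-1} and Lemma~\ref{lem:FP-rec-1}, with the $|b_1|=2$ base case handled by feeding in $P[b_1,\ldots,b_{-1}] = (l-l\inv)/(q\half-q\nhalf)$ from Corollary~\ref{cor:homfly-unknot} and tracking the $w$ cancellation. Your identification of the $|b_1|=2$ case as the delicate point, and of the rest as routine bookkeeping, matches the paper's own emphasis.
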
}
\begin{proof}
We will proceed by induction on $n$.

If $n = 0$, indeed $P[] = 1 = m[]\,\tilde{F}[]$.

If $n = 1$, we will show $P[b_1] = m[b_1]\,\tilde{F}[b_1]$ by induction on $|b_1|/2$. For the base case $|b_1| = 2$, we have two cases. First, if $b_1 > 0$ we have
\begin{align*}
P[2]
&= l^{-2} P[b_1,\ldots,b_{-1}] + l\inv (q\half - q\nhalf)
    &&\text{by Lemma~\ref{lem:homfly-rec-1}}\\
&= l^{-2} \left(\frac{l-l\inv}{q\half - q\nhalf}\right) + l\inv q\half - l\inv q\nhalf
    &&\text{by Corollary~\ref{cor:homfly-unknot}}\\
&= - l^{-3} q\nhalf \left( \frac{1 - l^2}{1 - q\inv} - l^2 q + l^2 \right)
    &&\text{factor out $-l^{-3}q\nhalf$}\\
&= - l^{-3} q\nhalf \left( w + l^2 \right)
    &&\text{by \eqref{eq1} below}\\
&= -l^{-3} q\nhalf w \left(1 + l^2w\inv \right)
    &&\text{factor out $w$}\\
&= m[2] \left(1 + l^2w\inv \right)
    &&\text{by \eqref{eq1second} below}
\end{align*}
where we use the equalities
\begin{gather}
\frac{1-l^2}{1-q\inv} - l^2q = \frac{1 - l^2 - l^2q + l^2}{1-q\inv} = \frac{1-l^2q}{1-q\inv} = w \label{eq1} \\
m[2] = l^{-2}m[b_1,\ldots,b_{-1}] = -l^{-3}q\nhalf w \label{eq1second}
\end{gather}
the latter of which follows from Definition~\ref{def:m}. 
The fact that $F[2] = 1 + y_1$, and therefore $\tilde{F}[2] = 1 + l^2w\inv$, gives us that $P[2] = m[2]\,\tilde{F}[2]$. 
Next, if $b_1 < 0$ we have
\begin{align*}
P[-2]
&= l^2 P[b_1,\ldots,b_{-1}] - l(q\half - q\nhalf)
    &&\text{by Lemma~\ref{lem:homfly-rec-1}}\\
&= l^2 \left(\frac{l-l\inv}{q\half - q\nhalf}\right) - l q\half + l q\nhalf
    &&\text{by Corollary~\ref{cor:homfly-unknot}}\\
&= - l q\half \left( q\inv \frac{1 - l^2}{1 - q\inv} + 1 - q\inv \right)
    &&\text{factor out $-lq\half$}\\
&= - l q\half \left( 1 + q^{-2} w \right)
    &&\text{by \eqref{eq2} below}\\
&= m[-2] \left( 1 + q^{-2} w \right)
    &&\text{Definition~\ref{def:m}}
\end{align*}
where we use the equality
\begin{equation}\label{eq2}
q\inv \frac{1-l^2}{1-q\inv} - q\inv = \frac{q\inv - l^2 q\inv - q\inv + q^{-2}}{1-q\inv} = q^{-2}\frac{1-l^2q}{1-q\inv} = q^{-2} w.
\end{equation}
The fact that $F[-2] = 1 + y_1$, and therefore $\tilde{F}[-2] = 1 + q^{-2}w$, gives us that $P[-2] = m[-2]\,\tilde{F}[-2]$. 

Now suppose $P[b_1] = m[b_1]\,\tilde{F}[b_1]$ for some even nonzero $b_1$. We again have two cases. If $b_1 > 0$, we want to show that $P[b_1+2] = m[b_1+2]\,\tilde{F}[b_1+2]$. Indeed
\begin{align*}
P[b_1+2]
&= l^{-2} P[b_1] + l\inv (q\half - q\nhalf)
    &&\text{by Lemma~\ref{lem:homfly-rec-1}}\\
&= l^{-2} m[b_1]\,\tilde{F}[b_1] + l\inv (q\half - q\nhalf)
    &&\text{by induction}\\
&= -l^{-3-b_1} q\nhalf w\,\tilde{F}[b_1] + l\inv (q\half - q\nhalf)
    &&\text{by Definition~\ref{def:m}}\\
&= -l^{-3-b_1} q\nhalf w \left( \tilde{F}[b_1] + l^{b_1+2} w\inv (1-q) \right)
    &&\text{factor out $-l^{-3-b_1} q\nhalf w$}\\
&= m[b_1+2]\,\tilde{F}[b_1+2]
    &&\text{by Def.~\ref{def:m}, Lem.~\ref{lem:FP-rec-1}.}
\end{align*}
If $b_1 < 0$ we want to show that $P[b_1-2] = m[b_1-2]\tilde{F}[b_1-2]$. Indeed
\begin{align*}
P[b_1-2]
&= l^2 P[b_1] - l (q\half - q\nhalf)
    &&\text{by Lemma~\ref{lem:homfly-rec-1}}\\
&= l^2 m[b_1]\,\tilde{F}[b_1] - l (q\half - q\nhalf)
    &&\text{by induction}\\
&= -l q\half l^2 \tilde{F}[b_1] - l (q\half - q\nhalf)
    &&\text{by Definition~\ref{def:m}}\\
&= -l q\half \left( l^2 \tilde{F}[b_1] + 1 - q\inv \right).
    &&\text{factor out $-lq\half$}\\
&= m[b_1-2]\,\tilde{F}[b_1-2]
    &&\text{by Def.~\ref{def:m}, Lem.~\ref{lem:FP-rec-1}.}
\end{align*}
This completes the base case of $n = 1$.

Now suppose $n > 1$. Recall from Lemma~\ref{lem:homfly-rec},
\[ P_0 = l^{-t_n|b_n|}\,P_2 + (1 - q\inv) \,q\half\, \frac{1-l^{-t_n|b_n|}}{l-l\inv} P_1. \]
By induction, we have that $P_1 = m_1 \tilde{F}_1$ and $P_2 = m_2 \tilde{F}_2$, where we define
\[ m_0 = m[a_1,\ldots,a_n] \quad\quad m_1 = m[a_1,\ldots,a_{n-1}] \quad\quad m_2 = m[a_1,\ldots,a_{n-2}]. \]
Thus,
\begin{equation}\label{eq:main2_1}
P_0 = l^{-t_n|b_n|}\,m_2\, \tilde{F}_2 + (1 - q\inv) \,q\half\, \frac{1-l^{-t_n|b_n|}}{l-l\inv}\,m_1\, \tilde{F}_1.
\end{equation}
On the other hand, by Corollary~\ref{cor:FP-rec},
\begin{equation}\label{eq:main2_2}
m_0\,\tilde{F}_0 = \mu(l,q)\,m_0\, \tilde{F}_2 + (1-q\inv) \,\nu(l,q)\, [|b_n|/2]_{\ell^2}\,m_0\,\tilde{F}_1.
\end{equation}
where $\mu(l,q)$ and $\nu(l,q)$ depend on $\type[b_1,\ldots,b_n]$. Thus, to conclude that $P_0 = m_0\,\tilde{F}_0$, it suffices to show the equality of the right hand sides of \eqref{eq:main2_1} and \eqref{eq:main2_2}. As per the definitions of $\mu(l,1)$ and $\nu(l,q)$, we have eight cases to check. This will be completely mechanical.

First, if $t_n = -1$, by Corollary~\ref{cor:FP-rec}, $\nu(l,q) = 1$, and by Definition~\ref{def:m}, $m_0 = -lq\half m_1$. 
Therefore,
\[ q\half \frac{1-l^{|b_n|}}{l-l\inv}\,m_1 = [|b_n|/2]_{l^2}(-lq\half)\,m_1 = \nu(l,q)\, [|b_n|/2]_{l^2}\, m_0 \]
since $(1-l^{|b_n|})/(l-l\inv) = -l(1-l^{|b_n|})/(1-l^2) = -l[|b_n|/2]_{l^2}$. Thus the coefficients of $\tilde{F}_1$ in \eqref{eq:main2_1} and \eqref{eq:main2_2} are equal. For the coefficients of $\tilde{F}_2$ we still have four cases.

In type $(\ldots,-1,-1)$, by Corollary~\ref{cor:FP-rec} and Definition~\ref{def:m},
\begin{align*}
\mu(l,q) &= l^{|b_n|-2}q\inv &
m_0 &= (-lq\half) (-lq\half m_2) = l^2 q\,m_2
\end{align*}
and therefore
\begin{gather*}
l^{|b_n|}\,m_2 = (l^{|b_n|-2}q\inv) (l^2 q)\, m_2 = \mu(l,q)\, m_0.\end{gather*}

In type $(1,-1)$, by Corollary~\ref{cor:FP-rec} and Definition~\ref{def:m},
\begin{align*}
\mu(l,q) &= l^{|b_2|+|b_1|} w\inv &
m_0 &= (-lq\half)(-l^{-|b_1|-1} q\nhalf w\,m_2) = l^{-|b_1|} w\, m_2
\end{align*}
and therefore
\begin{gather*}
l^{|b_2|}\,m_2 = (l^{|b_2|+|b_1|} w\inv)(l^{-|b_1|} w)\, m_2 = \mu(l,q)\, m_0.\end{gather*}

In type $(\ldots,-1,1,-1)$, by Corollary~\ref{cor:FP-rec}, Definition~\ref{def:m}, and Remark~\ref{rmk:m},
\begin{align*}
\mu(l,q) &= l^{|b_n|+|b_{n-1}|} &
m_0 &= (-lq\half)(-l^{-|b_{n-1}|-1} q\nhalf m_2) = l^{-|b_{n-1}|}\, m_2
\end{align*}
and therefore
\begin{gather*}
l^{|b_n|}\,m_2 = l^{|b_n|+|b_{n-1}|}\,l^{-|b_{n-1}|}\, m_2 = \mu(l,q)\, m_0.\end{gather*}

In type $(\ldots,1,1,-1)$, by Corollary~\ref{cor:FP-rec} and Definition~\ref{def:m}
\begin{align*}
\mu(l,q) &= -l^{|b_n|+|b_{n-1}|-1}q\inv &
m_0 &= (-lq\half)(l^{1-|b_{n-1}|} q\half m_2) = -l^{2-|b_{n-1}|}q\, m_2
\end{align*}
and therefore
\begin{gather*}
l^{|b_n|}\,m_2 = (-l^{|b_n|+|b_{n-1}|-1}q\inv)(-l^{2-|b_{n-1}|}q)\, m_2 = \mu(l,q)\, m_0.\end{gather*}

So, if $t_n = -1$ we have $P_0 = m_0\,\tilde{F}_0$. Now, suppose $t_n = 1$. Again, have four cases.

In type $(\ldots,-1,1)$, by Corollary~\ref{cor:FP-rec}, Remark~\ref{rmk:m}, and Definition~\ref{def:m},
\begin{align*}
\mu(l,q) &= 1 &
\nu(l,q) &= -l^2q &
m_0 &= -l^{-|b_n|-1}q\nhalf m_1 &
m_0 &= l^{-|b_n|}\, m_2
\end{align*}
and therefore $l^{-|b_n|}\,m_2 = \mu(l,q)\, m_0$ and
\begin{align*}
q\half \frac{1-l^{-|b_n|}}{l-l\inv}\,m_1 &= q\half l^{1-|b_n|} [|b_n|/2]_{l^2}\, m_1 \\
&= (-l^2q)[|b_n|/2]_{l^2}(-l^{-|b_n|-1}q\nhalf)\,m_1 \\
&= \nu(l,q)\, [|b_n|/2]_{l^2}\, m_0
\end{align*}
since $(1-l^{-|b_n|})/(l-l\inv) = l^{1-|b_n|}(1-l^{|b_n|})/(1-l^2) = l^{1-|b_n|}[|b_n|/2]_{l^2}$.

In type $(1,1)$, by Corollary~\ref{cor:FP-rec} and Definition~\ref{def:m},
\begin{align*}
\mu(l,q) &= -l^{|b_1|} w\inv &
\nu(l,q) &= 1 &
m_0 &= l^{1-|b_2|}q\half m_1
\end{align*}\begin{align*}
m_0 &= (l^{1-|b_2|}q\half)(-l^{-|b_1|-1}q\nhalf w m_2) = -l^{-|b_2|-|b_1|} w\, m_2
\end{align*}
and therefore $l^{-|b_2|}\,m_2 = (-l^{|b_1|}w\inv)(-l^{-|b_2|-|b_1|} w)\, m_2 = \mu(l,q)\, m_0$ and
\begin{align*}
q\half \frac{1-l^{-|b_2|}}{l-l\inv}\,m_1 = [|b_2|/2]_{l^2}(l^{1-|b_2|}q\half)\,m_1 = \nu(l,q)\, [|b_2|/2]_{l^2}\, m_0
\end{align*}
since as discussed above, $(1-l^{-|b_n|})/(l-l\inv) = l^{1-|b_n|}[|b_n|/2]_{l^2}$.

In type $(\ldots,-1,1,1)$, by Corollary~\ref{cor:FP-rec}, Definition~\ref{def:m}, and Remark~\ref{rmk:m},
\begin{align*}
\mu(l,q) &= -l^{|b_{n-1}|} &
\nu(l,q) &= 1 &
m_0 &= l^{1-|b_n|}q\half m_1
\end{align*}\begin{align*}
m_0 &= (l^{1-|b_n|}q\half)(-l^{-|b_{n-1}|-1}q\nhalf m_2) = -l^{-|b_n|-|b_{n-1}|}\, m_2
\end{align*}
and therefore $l^{-|b_n|}\,m_2 = (-l^{|b_{n-1}|})(-l^{-|b_n|-|b_{n-1}|})\, m_2 = \mu(l,q)\, m_0$ and
\begin{align*}
q\half \frac{1-l^{-|b_n|}}{l-l\inv}\,m_1 = [|b_n|/2]_{l^2}(l^{1-|b_n|}q\half)\,m_1 = \nu(l,q)\, [|b_n|/2]_{l^2}\, m_0
\end{align*}
since as discussed above, $(1-l^{-|b_n|})/(l-l\inv) = l^{1-|b_n|}[|b_n|/2]_{l^2}$.

In type $(\ldots,1,1,1)$, by Corollary~\ref{cor:FP-rec} and Definition~\ref{def:m},
\begin{align*}
\mu(l,q) &= l^{|b_{n-1}|-2}q\inv &
\nu(l,q) &= 1 &
m_0 &= l^{1-|b_n|}q\half m_1
\end{align*}\begin{align*}
m_0 &= (l^{1-|b_n|}q\half)(l^{1-|b_{n-1}|}q\half\, m_2) = l^{2-|b_n|-|b_{n-1}|}q\, m_2
\end{align*}
and therefore $l^{-|b_n|}\,m_2 = (l^{|b_{n-1}|-2}q\inv)(l^{2-|b_n|-|b_{n-1}|}q)\, m_2 = \mu(l,q)\, m_0$. The coefficients for $\tilde{F}_1$ are identical to the previous case.

Thus, if $n > 1$ we have $P_0 = m_0\,\tilde{F}_0$, which completes the proof.
\end{proof}

As an immediate result, we can apply the substitutions in Theorem~\ref{thm:homfly-to-alex}, to get a new specialization which yields the Alexander polynomial $\Delta(L)$. 
{\renewcommand{\thetheorem}{\ref{cor:C}}\begin{corollary}
For all even continued fractions $[b_1,\ldots,b_n]$ with $n \geq 0$,
\[ \Delta(C\big([b_1,\ldots,b_n]\big)) = \sgn(c_0)\,t^{e_0}\varphi_A(F[b_1,\ldots,b_n]) \]
where $c_0t^{e_0}$ is the leading term of $\Delta(C\big([b_1,\ldots,b_n]\big))$ and $\varphi_A(y_i) = -t^{(-1)^i}$.
\end{corollary}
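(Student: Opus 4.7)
The plan is to derive Corollary~\ref{cor:C} as a direct specialization of Theorem~\ref{thm:B} via the substitution $l = 1$, $q = t$ from Theorem~\ref{thm:homfly-to-alex}, which identifies $P[b_1,\ldots,b_n]|_{l=1,q=t}$ with $\Delta[b_1,\ldots,b_n]$. It then suffices to check that under this substitution the specialization $\varphi_P$ collapses to $\varphi_A$, and that the prefactor $m[b_1,\ldots,b_n]$ collapses to a signed Laurent monomial which realizes the leading term $\sgn(c_0)\,t^{e_0}$ of $\Delta[b_1,\ldots,b_n]$.

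First I would verify that all four clauses defining $\varphi_P$ reduce to the uniform rule $\varphi_A(y_i) = -t^{(-1)^i}$. For $b_1 > 0$, $l^2\bigl((1-l^2q)/(1-q\inv)\bigr)^{-1}$ specializes to $(1-t\inv)/(1-t) = -t\inv$; for $b_1 < 0$, $q^{-2}(1-l^2q)/(1-q\inv)$ specializes to $t^{-2}\cdot(-t) = -t\inv$; and for $i \geq 1$, $-l^2q \mapsto -t$ and $-q\inv \mapsto -t\inv$. In every case the result matches $-t^{(-1)^i}$, so $\tilde F[b_1,\ldots,b_n]\big|_{l=1,q=t} = \varphi_A(F[b_1,\ldots,b_n])$.

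Next I would exploit the explicit form $m[b_1,\ldots,b_n] = c_0\,l^{e_1}q^{e_2}\bigl((1-l^2q)/(1-q\inv)\bigr)^{e_3}$ granted in the statement of Theorem~\ref{thm:B}, with $c_0 \in \{-1,1\}$ and $e_3 \in \{-1,0,1\}$. Since $(1-l^2q)/(1-q\inv)$ specializes to $(1-t)/(1-t\inv) = -t$, the prefactor $m$ becomes the signed Laurent monomial $c_0(-1)^{e_3}\,t^{e_2+e_3}$. Setting $c_0' := c_0(-1)^{e_3} \in \{-1,1\}$ and $e_0 := e_2+e_3$, the specialized identity of Theorem~\ref{thm:B} reads
\[ \Delta[b_1,\ldots,b_n] = c_0'\,t^{e_0}\,\varphi_A(F[b_1,\ldots,b_n]) = \sgn(c_0')\,t^{e_0}\,\varphi_A(F[b_1,\ldots,b_n]), \]
which is the claimed equality, modulo identifying $c_0't^{e_0}$ with the leading term of $\Delta[b_1,\ldots,b_n]$.

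The main obstacle, and the only step needing care, is verifying that $c_0' t^{e_0}$ really is the \emph{leading} term of $\Delta$. Because $F[b_1,\ldots,b_n] = 1 + \sum_{\emptyset \neq I} y(I)$, the polynomial $\varphi_A(F[b_1,\ldots,b_n])$ has constant term $1$, so the monomial $c_0' t^{e_0}$ genuinely appears in $\Delta$. To upgrade this to the leading term I would argue, in parallel to Corollary~5.8 of \cite{lee_cluster_2019} in the Jones setting, that the $t$-degrees of the specialized height monomials $\varphi_A(y(I))$ for nonempty $I$ are all strictly larger (or, consistently, strictly smaller) than $0$, so multiplication by $c_0' t^{e_0}$ merely shifts all contributions away from the extremal $t$-exponent without cancellation. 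Given this degree-bookkeeping, the corollary follows at once.
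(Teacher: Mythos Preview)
Your reduction via Theorem~\ref{thm:B} and the substitution $l=1$, $q=t$ is correct and identical to the paper's: both verify that $w \mapsto -t$ forces each clause of $\varphi_P$ to collapse to $\varphi_A(y_i) = -t^{(-1)^i}$, and both recognize that it remains only to identify the specialized prefactor $m[b_1,\dots,b_n]\big|_{l=1,q=t}$ (which is indeed of the form $\pm t^{e}$) with $\sgn(c_0)\,t^{e_0}$.

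The gap is in your proposed resolution of this last identification. The claim that $\varphi_A(y(I))$ has strictly nonzero $t$-degree for every nonempty order ideal $I$ is false. Already for $Q[4]$ (the chain $1<2<3$) the ideal $I=\{1,2\}$ gives $\varphi_A(y_1y_2)=(-t^{-1})(-t)=1$, so the constant term of $\varphi_A(F[4])$ is $2$, not $1$, and the monomial $c_0'\,t^{e_0}$ is not itself the leading term of $\Delta[4]$. Worse, the degrees need not even be one-sided: in $Q[2,2,2]$ (with Hasse diagram $1<2>3>4<5$) the vertex $4$ is minimal, so $I=\{4\}$ is an order ideal with $\varphi_A(y_4)=-t$ of degree $+1$, while $I=\{1\}$ gives degree $-1$. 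Hence the ``no cancellation at the extremal exponent'' picture you sketch does not hold, and the degree-bookkeeping argument as stated cannot close the proof.

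The paper takes a different route for this step: rather than controlling the individual heights $\varphi_A(y(I))$, it applies the substitution $l=1$, $q=t$ to the recursion for $P$ (Lemma~\ref{lem:homfly-rec}, together with Corollary~\ref{cor:homfly-unknot} for the base) and to the recursive Definition~\ref{def:m} of $m$, and argues inductively---in the style of Section~5 of \cite{lee_cluster_2019} for the Jones case---that $m[b_1,\dots,b_n]\big|_{l=1,q=t}$ tracks the degree and the sign of the leading coefficient of $\Delta$ at each stage.
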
}
\begin{proof}
Under the substitutions $l = 1$, $q = t$ from Theorem~\ref{thm:homfly-to-alex}, the fraction $w$ becomes $-t$, so $\varphi_P(y_1)$, which is equal to either $l^2 w\inv$ or $q\inv w$, becomes $-t\inv$ in both cases. Furthermore, $\varphi_P(y_{2i})$, which is equal to $-l^2q$, becomes $-t$, and $\varphi_P(y_{2i+1})$, which is equal to $-q\inv$, becomes $-t\inv$. Thus, $\varphi_P(y_i)$ becomes $-t^{(-1)^i}$, which is exactly the definition of $\varphi_A$.

Applying these substitutions to Corollary~\ref{cor:homfly-unknot} and Lemma~\ref{lem:homfly-rec}, it is straightforward to verify, using similar reasoning to that in Section~5 of \cite{lee_cluster_2019}, that the degree of $m[b_1,\ldots,b_n]|_{l=1,q=t}$ is exactly the degree $e_0$ of $\Delta(C\big([b_1,\ldots,b_n]\big))$, and its coefficient the sign of $t^{e_0}$ in $\Delta(C\big([b_1,\ldots,b_n]\big))$. Thus, the desired equality follows from Theorem~\ref{thm:B}.
\end{proof}

\appendix

\section{Proof of Theorem~\ref{thm:path_poset_cf}}\label{appx:thm:path_poset_cf}

We begin by proving the result in the special case of positive continued fractions.

\begin{lemma}\label{lem:path_posets_pos_cf}
For any positive continued fraction $[a_1,\ldots,a_n]$ with $n \geq 0$, the poset $Q[a_1,\ldots,a_n]$ is isomorphic to $Q\big([a_1,\ldots,a_n]\big)$.
\end{lemma}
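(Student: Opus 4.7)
The plan is to read off the sign sequence of the (unlabeled) path poset $Q[a_1,\ldots,a_n]$ directly from Definition~\ref{def:path_poset_cf} and verify that it coincides with $\isgn[a_1,\ldots,a_n]$; the isomorphism with $Q([a_1,\ldots,a_n])$ then follows immediately from Propositions~\ref{prop:path_posets_q} and \ref{prop:seqs-pos-cf}, whose composition identifies path posets with their inner sign sequences.

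First I would dispose of the degenerate cases: when $n = 0$ we have $[\,] = \infty$ and both $Q[\,]$ and $Q(\infty)$ are empty, and when $n = 1$ with $a_1 = 1$ both $Q[1] = S_1$ and $Q(1)$ are again empty. For the main case (either $n = 1$ with $a_1 \geq 2$, or $n \geq 2$), positivity forces $t_i = (-1)^{i-1}$, so consecutive types always differ and every join in Definition~\ref{def:path_poset_cf} inserts a connecting vertex $\ell_i$. Thus $Q[a_1,\ldots,a_n]$ has $\sum_{i=1}^n (a_i - 1) + (n - 1) = \ell_n - 1$ vertices, labeled $1, 2, \ldots, \ell_n - 1$ in order along the Hasse diagram (this latter fact is what makes the diagram a path in the first place).

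The core calculation is to identify the orientation of the edge $\{k, k+1\}$ for each $k = 1, \ldots, \ell_n - 2$. I would split into three cases, depending on whether the edge lies in the interior of some $S_i$ (so $\ell_{i-1} + 1 \leq k$ and $k + 1 \leq \ell_i - 1$), is the ``left'' edge of a connecting vertex (so $k + 1 = \ell_i$), or is the ``right'' edge of one (so $k = \ell_i$). Each case is read off directly from the diagrams in Definition~\ref{def:path_poset_cf}: an interior $S_i$-edge has orientation $t_i$; the edge $\{\ell_i - 1, \ell_i\}$ has orientation $t_i$ (since $\ell_i$ sits on the $t_i$-side of $S_i$); and the edge $\{\ell_i, \ell_i + 1\}$ has orientation $t_{i+1}$. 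In all three cases, the orientation matches $\sgn_{k+1}$, the $(k+1)$-th entry of $\sgn[a_1,\ldots,a_n]$.

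The main bookkeeping obstacle is the case $a_i = 1$, where $S_i = \emptyset$ and the two connecting vertices $\ell_{i-1}$ and $\ell_i$ are adjacent in the Hasse diagram. Here the edge $\{\ell_{i-1}, \ell_i\}$ is specified by both the $(S_{i-1}, S_i)$ and $(S_i, S_{i+1})$ join operations, and I would check carefully that the two operations agree and both assign orientation $t_i$, using $t_{i-1} = -t_i$ together with the fact that the connecting vertex $\ell_{i-1}$ is placed on the $t_{i-1}$-side while $\ell_i$ is placed on the $t_i$-side. Once this edge-sign computation is complete in all cases, the sign sequence of $Q[a_1,\ldots,a_n]$ reads left to right as $(\sgn_2, \sgn_3, \ldots, \sgn_{\ell_n - 1}) = \isgn[a_1,\ldots,a_n]$, and Proposition~\ref{prop:path_posets_q} then identifies this path poset with $Q([a_1,\ldots,a_n])$, completing the proof.
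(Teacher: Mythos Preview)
Your proposal is correct and follows essentially the same approach as the paper's own proof: both dispose of the degenerate cases $Q[\,]=Q(\infty)$ and $Q[1]=Q(1)$, observe that positivity forces $t_i\neq t_{i+1}$ so every join inserts a connecting vertex, and then read off the edge signs of $Q[a_1,\ldots,a_n]$ to verify they give $\isgn[a_1,\ldots,a_n]$. Your treatment is slightly more meticulous in singling out the $a_i=1$ bookkeeping, which the paper leaves implicit, but the argument is the same.
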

\begin{proof}
Since $Q[\,] = Q(\infty) = Q[1] = Q(1) = \emptyset$, we need only consider the case where $\ell_n \geq 2$. As discussed in the proof of Proposition~\ref{prop:path_posets_q}, 
it suffices to show that the signs of the edges of the Hasse diagram of $Q[a_1,\ldots,a_n]$ are exactly $\isgn[a_1,\ldots,a_n]$. Every edge of the straight segment $S_i$ has sign $t_i$ by definition, and there are exactly $a_i-2$ many of them. Furthermore, because $[a_1,\ldots,a_n]$ is positive, $t_i \neq t_{i+1}$ for all $i$ and therefore there is a connecting vertex between each $S_i$ and $S_{i+1}$. The sign of the edge on the left of each connecting vertex is $t_i$, and the sign on the right is $t_{i+1}$, and so the signs of the edges of $Q[a_1,\ldots,a_n]$ are
\[ (\underbrace{\underbrace{t_1,\ldots,t_1}_{a_1-2},t_1}_{a_1-1},\underbrace{t_2,\underbrace{t_2,\ldots,t_2}_{a_2-2},t_2}_{a_2},\ldots,\underbrace{t_n,\underbrace{t_n,\ldots,t_n}_{a_n-2}}_{a_n-1}), \]
which are exactly the terms of $\isgn[a_1,\ldots,a_n]$.
\end{proof}

The proof of Theorem~\ref{thm:path_poset_cf} is powered by the following lemma.

\begin{lemma}\label{lem:path_poset_cf}
For $k > 0$ and $n > k$, let $[a_1,\ldots,a_k,c_{k+1},\ldots,c_n] = p/q$ be any continued fraction which satisfies the conditions of Definition~\ref{def:path_poset_cf} such that its first $k$ terms $a_1,\ldots,a_k$ are strictly positive. Then, there exists some $k' > k$, a sign $s \in \{1,-1\}$, and strictly positive integers $a'_1,\ldots,a'_{k'}$ such that the continued fraction $[a'_1,\ldots,a'_{k'},sc_{k+2},\ldots,sc_n] = p/q$ and the posets $Q[a'_1,\ldots,a'_{k'},sc_{k+2},\ldots,sc_n]$ and $Q[a_1,\ldots,a_k,c_{k+1},\ldots,c_n]$ are isomorphic.
\end{lemma}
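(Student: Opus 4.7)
The plan is to do a case analysis on the sign of $c_{k+1}$. If $c_{k+1} > 0$ I would take $k' = k+1$, $s = 1$, and $a'_{k+1} = c_{k+1}$; the new continued fraction is then literally identical to the original, so there is nothing to check. The substantive case is $c_{k+1} < 0$; write $c_{k+1} = -b$ with $b \geq 1$. Since $a_k > 0$ and $c_{k+1} < 0$, a direct computation from Definition~\ref{def:cf_quans} gives $t_k = t_{k+1}$, and the Definition~\ref{def:path_poset_cf} hypothesis on the original CF therefore forces $a_k \geq 2$ and $b \geq 2$.

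The key tool is the elementary continued fraction identity
\[
[a,\, -b,\, c,\, d,\, \ldots] = [a - 1,\, 1,\, b - 1,\, -c,\, -d,\, \ldots],
\]
which I would verify by a direct rational computation in the base case of length three and then extend to arbitrary tails via the recursive equality $[x_1, \ldots, x_m, t] = [x_1, \ldots, x_{m-1}, x_m + 1/t]$ together with the observation $-[c_1, \ldots, c_n] = [-c_1, \ldots, -c_n]$. Applying this identity at the boundary of the positive prefix gives the candidates $k' = k+2$, $s = -1$, and
\[
(a'_1, \ldots, a'_{k'}) = (a_1, \ldots, a_{k-1},\, a_k - 1,\, 1,\, b - 1),
\]
every entry strictly positive by the bounds above; the value equality $[a'_1, \ldots, a'_{k'}, sc_{k+2}, \ldots, sc_n] = p/q$ is then immediate.

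What remains is to check that the new CF satisfies the conditions of Definition~\ref{def:path_poset_cf} and that the two path posets are isomorphic. The new type sequence agrees with the old for $i \leq k$, is $(-t_k, t_k)$ at positions $(k+1, k+2)$, and satisfies $t'_{i+1} = t_i$ for $i \geq k+2$, because the parity shift of the tail is cancelled exactly by the sign flip $s = -1$. The only length-one entries introduced are the new ``$1$'' at position $k+1$, around which types alternate automatically, and, when $b = 2$, the entry $b - 1 = 1$ at position $k+2$. In the edge sub-case $b = 2$ with $c_{k+2} > 0$, where this second ``$1$'' sits between two equal types and violates the hypothesis, I would fall back on the companion identity $[a,\, -2,\, c,\, d,\, \ldots] = [a - 1,\, 2,\, c - 1,\, d,\, \ldots]$ (verified the same way) to produce a valid CF of the required form. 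For the isomorphism, I would use the sign sequence of the Hasse diagram from Proposition~\ref{prop:path_posets_q}: the substitution changes the local Hasse picture only across positions $k$ through $k+2$, and by inspecting the connections between $S_{k-1}, S_k, S_{k+1}, S_{k+2}$ of Definition~\ref{def:path_poset_cf} in each sub-case one checks that the induced up/down pattern of Hasse edges is preserved once the global type flip beyond position $k+1$ is matched by the choice $s = -1$.

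The main obstacle I anticipate is the edge sub-case $b = 2$, $c_{k+2} > 0$: the naive substitution violates the Definition~\ref{def:path_poset_cf} conditions at the new ``$1$'' of value $b - 1$, and handling it cleanly while still landing in the lemma's required form $[a'_1, \ldots, a'_{k'}, sc_{k+2}, \ldots, sc_n]$ is the most delicate part of the argument. Everything else reduces to routine bookkeeping between the continued fraction identities, the type sequence, and the combinatorics of adjacent sub-posets $S_i, S_{i+1}$.
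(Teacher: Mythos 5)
Your main line is exactly the paper's proof: the same split on the sign of $c_{k+1}$, the same identity $[\ldots,a_k,c_{k+1},c_{k+2},\ldots]=[\ldots,a_k-1,1,-c_{k+1}-1,-c_{k+2},\ldots]$, the same choice $k'=k+2$, $s=-1$, and essentially the same local comparison of the sub-posets around the join (the paper draws the two configurations side by side rather than tracking the up/down sign sequence of Proposition~\ref{prop:path_posets_q}, but the content is identical). Your observation that $t_k=t_{k+1}$ forces $a_k\ge 2$ and $-c_{k+1}\ge 2$ is the right justification that all new entries are strictly positive; the paper uses this silently.

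The edge sub-case you flag ($c_{k+1}=-2$ and $c_{k+2}>0$) is a genuine problem, and the paper's proof does not address it at all. Concretely, $[2,-2,2]=4/3$ satisfies the hypotheses with $k=1$, but the construction outputs $[1,1,1,-2]$, whose type sequence is $(1,-1,1,1)$ with $|c'_3|=1$, violating the precondition of Definition~\ref{def:path_poset_cf}; so $Q[1,1,1,-2]$ is, strictly read, undefined. Worse, no choice of the form the lemma demands exists for this input: with $s=1$ one would need a positive continued fraction for $4/3$ of length at least $3$ ending in $2$, and with $s=-1$ either $a'_{k'}\ge 2$ (which reduces to the same impossibility via $[\ldots,a'_{k'},-2]=[\ldots,a'_{k'}-1,2]$) or $a'_{k'}=1$ (which forces $[1,1,1,-2]$). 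So the defect cannot be repaired by a cleverer choice of $k'$, $s$, $a'_i$: one must either extend Definition~\ref{def:path_poset_cf} to cover this case (the construction does still produce a well-defined poset isomorphic to the original, because the vertex $\ell'_{k+2}-1$ needed for the join coincides with the connecting vertex $\ell'_{k+1}$, which exists), or weaken the lemma's conclusion so that the first tail entry may change, as your companion identity $[a,-2,c,\ldots]=[a-1,2,c-1,\ldots]$ requires --- that identity is correct and does yield an isomorphic poset (e.g.\ $Q[1,2,1]\cong Q[2,-2,2]$), and it suffices for the iteration in the proof of Theorem~\ref{thm:path_poset_cf}, but it does not land in the stated form $[a'_1,\ldots,a'_{k'},sc_{k+2},\ldots,sc_n]$. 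In short: your proposal reproduces the paper's argument, correctly identifies a gap that the paper's own proof shares, and, like the paper, leaves that gap unresolved.
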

\begin{proof}
If $c_{k+1} > 0$, set $k' = k+1$, the sign $s = 1$, and
\[ a'_i = a_i \text{ for all } i \leq k, \quad\quad\quad\quad a_{k+1} = c_{k+1}. \]
The resulting continued fractions are identical, so the resulting path posets are also identical, and therefore certainly isomorphic.

Otherwise, $c_{k+1} < 0$. The following identity of continued fractions is straightforward to verify. \begin{equation}\label{eq:cfid1}\begin{aligned}
&[a_1,\ldots,a_k,c_{k+1},\ldots,c_n] \\&\quad
= [a_1,\ldots,a_{k-1},(a_k-1),1,(-c_{k+1}-1),-c_{k+2}\ldots,-c_n]
\end{aligned}\end{equation}
Thus, set $k' = k+2$, the sign $s = -1$, and
\[ a'_i = a_i \text{ for all } i < k, \quad\enspace a_{k} = (a_k - 1), \quad\enspace a_{k+1} = 1, \quad\enspace a_{k+2} = (-c_{k+1}-1). \]
The resulting continued fractions are equal by \eqref{eq:cfid1}, so all we need to show is that $Q_A = Q[a_1,\ldots,a_k,c_{k+1},\ldots,c_n]$ is isomorphic to
\[ Q_B = Q[a_1,\ldots,a_{k-1},(a_k-1),1,(-c_{k+1}-1),-c_{k+2}\ldots,-c_n]. \]

In the path poset $Q_A$ the sub-posets $S_k$ and $S_{k+1}$ are joined as shown on the left of Figure~\ref{fig:lem:path_poset_cf-1}, and in the path poset $Q_B$, the sub-posets $S_k$, $S_{k+1}$, and $S_{k+2}$ are joined as shown below on the right of Figure~\ref{fig:lem:path_poset_cf-1}. Notice that these sub-posets are isomorphic. For $i < k$ the segments $S_i$ and how they are joined in the two posets are identical, and for $i > k+1$, the segment $S_i$ in $Q_A$ is isomorphic to $S_{i+1}$ in $Q_B$ since $t_i$ for $[a_1,\ldots,a_k,c_{k+1},\ldots,c_n]$ is equal to $t_{i+1}$ for $[a_1,\ldots,a_{k-1},(a_k-1),1,(-c_{k+1}-1),-c_{k+2}\ldots,-c_n]$, and the same for $|c_i|$ and $|c_{i+1}|$. So, the two posets are isomorphic, as desired.
\end{proof}

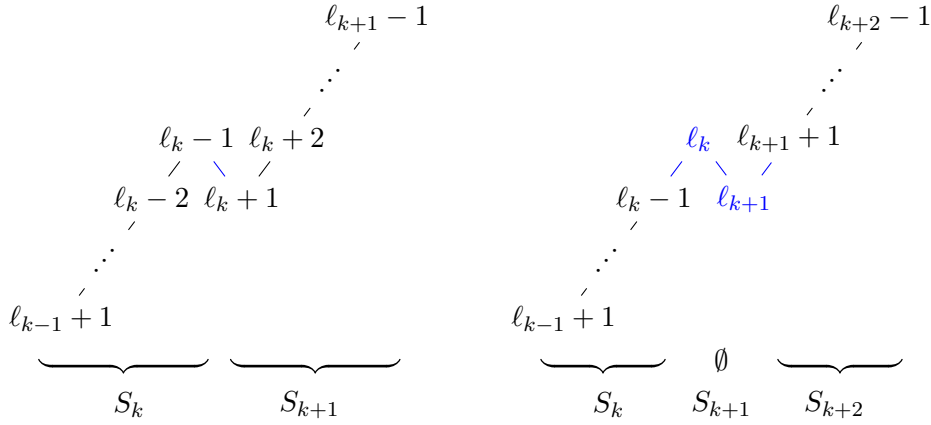
\begin{figure}[ht]
$\begin{gathered}\begin{tikzpicture}[scale=0.8,xscale=0.75]
\node (lip1) at (1,1) {$\ell_{k-1}+1$};
\node (di) at (2,2) {\rotatebox{51.34}{$\cdots$}};
\node (lsim2) at (3,3) {$\ell_k - 2$};
\node (lsim1) at (4,4) {$\ell_k - 1$};
\node (lsip1) at (5,3) {$\ell_k+1$};
\node (lsip2) at (6,4) {$\ell_k+2$};
\node (dsi) at (7,5) {\rotatebox{51.34}{$\cdots$}};
\node (lsssim1) at (8,6) {$\ell_{k+1} - 1$};
\draw (lip1) -- (di) -- (lsim2) -- (lsim1);
\draw[color=blue] (lsim1) -- (lsip1);
\draw (lsip1) -- (lsip2) -- (dsi) -- (lsssim1);
\draw[decorate, decoration={calligraphic brace,amplitude=5pt}, line width=1pt]
    ( $ (4+0.25,0.33) $ ) -- ( $ (1-0.5,0.33) $ );
\node at (2.5,0.33-0.75) {$S_k$};
\draw[decorate, decoration={calligraphic brace,amplitude=5pt}, line width=1pt]
    ( $ (8+0.5,0.33) $ ) -- ( $ (5-0.25,0.33) $ );
\node at (6.5,0.33-0.75) {$S_{k+1}$};
\end{tikzpicture}\end{gathered} 
\quad\quad
\begin{gathered}\begin{tikzpicture}[scale=0.8,xscale=0.75]
\node (lip1) at (1,1) {$\ell_{k-1}+1$};
\node (di) at (2,2) {\rotatebox{51.34}{$\cdots$}};
\node (lsim2) at (3,3) {$\ell_k - 1$};
\node (lsim1) at (4,4) {$\textcolor{blue}{\ell_k}$};
\node (lsip1) at (5,3) {$\textcolor{blue}{\ell_{k+1}}$};
\node (lsip2) at (6,4) {$\ell_{k+1}+1$};
\node (dsi) at (7,5) {\rotatebox{51.34}{$\cdots$}};
\node (lsssim1) at (8,6) {$\ell_{k+2} - 1$};
\draw (lip1) -- (di) -- (lsim2);
\draw[color=blue] (lsim2) -- (lsim1) -- (lsip1) -- (lsip2);
\draw (lsip2) -- (dsi) -- (lsssim1);
\draw[decorate, decoration={calligraphic brace,amplitude=5pt}, line width=1pt]
    ( $ (3+0.25,0.33) $ ) -- ( $ (1-0.5,0.33) $ );
\node at (2,0.33-0.75) {$S_k$};
\node at (4.5,0.33) {$\emptyset$};
\node at (4.5,0.33-0.75) {$S_{k+1}$};
\draw[decorate, decoration={calligraphic brace,amplitude=5pt}, line width=1pt]
    ( $ (8+0.5,0.33) $ ) -- ( $ (6-0.25,0.33) $ );
\node at (7,0.33-0.75) {$S_{k+2}$};
\end{tikzpicture}\end{gathered}$
\caption{Sub-posets of $Q_A$ (on the left) and $Q_B$ (on the right).}
\label{fig:lem:path_poset_cf-1}
\end{figure}

\begin{proof}[Proof of Theorem~\ref{thm:path_poset_cf}]
Let $[c_1,\ldots,c_n] = p/q$ be any continued fraction which satisfies the conditions of Definition~\ref{def:path_poset_cf}. It suffices to show that the poset $Q[c_1,\ldots,c_n]$ is isomorphic to either $Q(p/q)$ or $Q(p/(q-\sgn(q)\,p))$. If $n = 0$, we have that $Q[\,] = Q(\infty) = \emptyset$, so assume $n \geq 1$.

First suppose $c_1 > 0$. In that case, we can apply Lemma~\ref{lem:path_poset_cf} $(n-1)$ many times to obtain a positive continued fraction $[a_1,\ldots,a_{n'}] = p/q$ such that the posets $Q[c_1,\ldots,c_n]$ and $Q[a_1,\ldots,a_{n'}]$ are isomorphic. Thus, by Lemma~\ref{lem:path_posets_pos_cf}, we conclude that $Q[c_1,\ldots,c_n]$ is isomorphic to $Q(p/q)$.

Otherwise, $c_1 < 0$ and therefore $\sgn(q) = -1$. We will show that $Q[c_1,\ldots,c_n]$ is isomorphic to $Q(p/(q+p))$. The following identity is straightforward to verify.
\begin{equation}
[1,(-c_1-1),-c_2,\ldots,-c_n] = 1 + \frac{1}{-\frac{p}{q}-1} = \frac{p}{q+p}
\end{equation}
Thus, consider the poset $Q[1,(-c_1-1),-c_2,\ldots,-c_n]$. In this poset, the sub-posets $S_1$ and $S_2$ are joined as shown on the right of Figure~\ref{fig:thm:path_poset_cf-1}. Shown on the left of Figure~\ref{fig:thm:path_poset_cf-1} is the segment $S_1$ in $Q[c_1,\ldots,c_n]$.

\begin{figure}
$\begin{gathered}\begin{tikzpicture}[scale=0.8,xscale=0.75]
\node (lsip1) at (5,-3) {$1$};
\node (lsip2) at (6,-4) {$2$};
\node (dsi) at (7,-5) {\rotatebox{-51.34}{$\cdots$}};
\node (lsssim1) at (8,-6) {$\ell_1 - 1$};
\draw (lsip1) -- (lsip2) -- (dsi) -- (lsssim1);
\draw[decorate, decoration={calligraphic brace,amplitude=5pt}, line width=1pt]
    ( $ (8+0.5,-6.67) $ ) -- ( $ (5-0.25,-6.67) $ );
\node at (6.5,-6.67-0.75) {$S_1$};
\end{tikzpicture}\end{gathered}
\quad\quad
\quad\quad
\begin{gathered}\begin{tikzpicture}[scale=0.8,xscale=0.75]
\node (lsip1) at (5,-3) {$\textcolor{blue}{1}$};
\node (lsip2) at (6,-4) {$2$};
\node (dsi) at (7,-5) {\rotatebox{-51.34}{$\cdots$}};
\node (lsssim1) at (8,-6) {$\ell_2 - 1$};
\draw[color=blue] (lsip1) -- (lsip2);
\draw (lsip2) -- (dsi) -- (lsssim1);
\node at (4,-4) {$\emptyset$};
\node at (4,-4-0.75) {$S_1$};
\draw[decorate, decoration={calligraphic brace,amplitude=5pt}, line width=1pt]
    ( $ (8+0.5,-6.67) $ ) -- ( $ (6-0.25,-6.67) $ );
\node at (7,-6.67-0.75) {$S_2$};
\end{tikzpicture}\end{gathered}$
\caption{Sub-posets of $Q[c_1,\ldots,c_n]$ (on the left) and $Q[1,(-c_1-1),-c_2,\ldots,-c_n]$ (on the right).}
\label{fig:thm:path_poset_cf-1}
\end{figure}
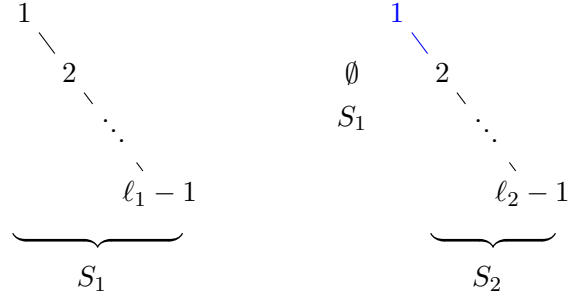

For $i > 1$, the segment $S_i$ in the poset $Q[c_1,\ldots,c_n]$ is identical to $S_{i+1}$ in the poset $Q[1,(-c_1-1),-c_2,\ldots,-c_n]$ since $t_i$ for the continued fraction $[c_1,\ldots,c_n]$ is equal to $t_{i+1}$ for the continued fraction $[1,(-c_1-1),-c_2,\ldots,-c_n]$, and the same for $|c_i|$ and $|c_{i+1}|$. Thus, $Q[c_1,\ldots,c_n]$ and $Q[1,(-c_1-1),-c_2,\ldots,-c_n]$ are identical, and therefore certainly isomorphic. By the argument in the previous case, we know that $Q[1,(-c_1-1),-c_2,\ldots,-c_n]$ is isomorphic to $Q(p/(q+p))$. Thus, $Q[c_1,\ldots,c_n]$ is isomorphic to $Q(p/(q+p))$, as desired.
\end{proof}

\printbibliography

@article{schiffler_lecture_2016,
	title = {Lecture notes on cluster algebras from surfaces},
	journaltitle = {Homological methods, representation theory and cluster algebras: Proceedings of the {CIMPA} School, Mar del Plata},
	author = {Schiffler, R. and Assem, I. and Trepode, S.},
	date = {2016},
	annotation = {...},
	file = {LNCIMPA.pdf:/Users/myac/Zotero/storage/DI5CR9XE/LNCIMPA.pdf:application/pdf}
}

@book{murasugi_knot_2007,
	title = {Knot theory and its applications},
	publisher = {Springer Science \& Business Media},
	author = {Murasugi, Kunio},
	date = {2007},
	file = {Snapshot:/Users/myac/Zotero/storage/SMJVCM3V/books.html:text/html}
}

@book{hardy_introduction_1979,
	title = {An introduction to the theory of numbers},
	publisher = {Oxford university press},
	author = {Hardy, Godfrey Harold and Wright, Edward Maitland},
	date = {1979},
	file = {Snapshot:/Users/myac/Zotero/storage/AQBIJSQB/books.html:text/html}
}

@book{stanley_enumerative_2011,
	edition = {Second},
	title = {Enumerative Combinatorics},
	volume = {1},
	publisher = {Cambridge University Press},
	author = {Stanley, Richard P.},
	date = {2011}
}

@book{gross_graph_2005,
	title = {Graph theory and its applications},
	publisher = {Chapman and Hall/{CRC}},
	author = {Gross, Jonathan L. and Yellen, Jay},
	date = {2005},
	file = {Snapshot:/Users/myac/Zotero/storage/T6KQ8BWB/9781420057140.html:text/html}
}

@article{canakci_snake_2015,
	title = {Snake graph calculus and cluster algebras from surfaces {II}: self-crossing snake graphs},
	volume = {281},
	shorttitle = {Snake graph calculus and cluster algebras from surfaces {II}},
	pages = {55--102},
	number = {1},
	journaltitle = {Mathematische Zeitschrift},
	author = {Canakci, Ilke and Schiffler, Ralf},
	date = {2015},
	file = {Full Text:/Users/myac/Zotero/storage/4C87D7U3/s00209-015-1475-y.html:text/html}
}

@article{jones_polynomial_1985,
	title = {A polynomial invariant for knots via von Neumann algebras},
	volume = {12},
	issn = {0273-0979},
	url = {https://mathscinet.ams.org/mathscinet-getitem?mr=766964},
	doi = {10.1090/S0273-0979-1985-15304-2},
	pages = {103--111},
	number = {1},
	journaltitle = {American Mathematical Society. Bulletin. New Series},
	shortjournal = {Bull. Amer. Math. Soc. (N.S.)},
	author = {Jones, Vaughan F. R.},
	urldate = {2019-04-03},
	date = {1985},
	mrnumber = {766964},
	file = {Full Text:/Users/myac/Zotero/storage/6GHDQITA/Jones - 1985 - A polynomial invariant for knots via von Neumann a.pdf:application/pdf;MathSciNet Snapshot:/Users/myac/Zotero/storage/9IYQZ8DK/publdoc.html:text/html}
}

@article{alexander_topological_1928,
	title = {Topological invariants of knots and links},
	volume = {30},
	issn = {0002-9947},
	url = {https://mathscinet.ams.org/mathscinet-getitem?mr=1501429},
	doi = {10.2307/1989123},
	pages = {275--306},
	number = {2},
	journaltitle = {Transactions of the American Mathematical Society},
	shortjournal = {Trans. Amer. Math. Soc.},
	author = {Alexander, J. W.},
	urldate = {2019-04-03},
	date = {1928},
	mrnumber = {1501429},
	file = {Full Text:/Users/myac/Zotero/storage/XPKFIIW2/Alexander - 1928 - Topological invariants of knots and links.pdf:application/pdf;MathSciNet Snapshot:/Users/myac/Zotero/storage/BCNAR6UJ/publdoc.html:text/html}
}

@article{freyd_new_1985,
	title = {A new polynomial invariant of knots and links},
	volume = {12},
	issn = {0273-0979},
	url = {https://mathscinet.ams.org/mathscinet-getitem?mr=776477},
	doi = {10.1090/S0273-0979-1985-15361-3},
	pages = {239--246},
	number = {2},
	journaltitle = {American Mathematical Society. Bulletin. New Series},
	shortjournal = {Bull. Amer. Math. Soc. (N.S.)},
	author = {Freyd, P. and Yetter, D. and Hoste, J. and Lickorish, W. B. R. and Millett, K. and Ocneanu, A.},
	urldate = {2019-04-03},
	date = {1985},
	mrnumber = {776477},
	file = {Full Text:/Users/myac/Zotero/storage/VVLK8PXI/Freyd et al. - 1985 - A new polynomial invariant of knots and links.pdf:application/pdf;MathSciNet Snapshot:/Users/myac/Zotero/storage/W6R7BZ74/publdoc.html:text/html}
}

@article{fomin_cluster_2002,
	title = {Cluster algebras. I. Foundations},
	volume = {15},
	issn = {0894-0347},
	url = {https://mathscinet.ams.org/mathscinet-getitem?mr=1887642},
	doi = {10.1090/S0894-0347-01-00385-X},
	pages = {497--529},
	number = {2},
	journaltitle = {Journal of the American Mathematical Society},
	shortjournal = {J. Amer. Math. Soc.},
	author = {Fomin, Sergey and Zelevinsky, Andrei},
	urldate = {2019-04-03},
	date = {2002},
	mrnumber = {1887642},
	file = {Full Text:/Users/myac/Zotero/storage/3UPXGJGF/Fomin and Zelevinsky - 2002 - Cluster algebras. I. Foundations.pdf:application/pdf;MathSciNet Snapshot:/Users/myac/Zotero/storage/S96CU6ZT/publdoc.html:text/html}
}

@article{musiker_cluster_2010,
	title = {Cluster expansion formulas and perfect matchings},
	volume = {32},
	issn = {0925-9899},
	url = {https://mathscinet.ams.org/mathscinet-getitem?mr=2661414},
	doi = {10.1007/s10801-009-0210-3},
	pages = {187--209},
	number = {2},
	journaltitle = {Journal of Algebraic Combinatorics},
	shortjournal = {J. Algebraic Combin.},
	author = {Musiker, Gregg and Schiffler, Ralf},
	urldate = {2019-04-04},
	date = {2010},
	mrnumber = {2661414},
	file = {Full Text:/Users/myac/Zotero/storage/CCE78BW6/Musiker and Schiffler - 2010 - Cluster expansion formulas and perfect matchings.pdf:application/pdf;MathSciNet Snapshot:/Users/myac/Zotero/storage/PCKQX33X/publdoc.html:text/html}
}

@article{duzhin_formula_2015,
	title = {A formula for the {HOMFLY} polynomial of rational links},
	volume = {1},
	issn = {2199-6792},
	url = {https://mathscinet.ams.org/mathscinet-getitem?mr=3434501},
	doi = {10.1007/s40598-015-0013-7},
	pages = {345--359},
	number = {4},
	journaltitle = {Arnold Mathematical Journal},
	shortjournal = {Arnold Math. J.},
	author = {Duzhin, Sergei and Shkolnikov, Mikhail},
	urldate = {2019-05-03},
	date = {2015},
	mrnumber = {3434501},
	file = {Full Text:/Users/myac/Zotero/storage/A7WLZGE4/Duzhin and Shkolnikov - 2015 - A formula for the HOMFLY polynomial of rational li.pdf:application/pdf;MathSciNet Snapshot:/Users/myac/Zotero/storage/Q9UXGRYV/publdoc.html:text/html}
}

@article{canakci_snake_2013,
	title = {Snake graph calculus and cluster algebras from surfaces},
	volume = {382},
	issn = {0021-8693},
	url = {https://mathscinet.ams.org/mathscinet-getitem?mr=3034481},
	doi = {10.1016/j.jalgebra.2013.02.018},
	pages = {240--281},
	journaltitle = {Journal of Algebra},
	shortjournal = {J. Algebra},
	author = {Canakci, Ilke and Schiffler, Ralf},
	urldate = {2019-05-03},
	date = {2013},
	mrnumber = {3034481},
	file = {MathSciNet Snapshot:/Users/myac/Zotero/storage/IHNIZYKC/publdoc.html:text/html;Submitted Version:/Users/myac/Zotero/storage/VP3I9EAC/Canakci and Schiffler - 2013 - Snake graph calculus and cluster algebras from sur.pdf:application/pdf}
}

@article{canakci_cluster_2018,
	title = {Cluster algebras and continued fractions},
	volume = {154},
	issn = {0010-437X},
	url = {https://mathscinet.ams.org/mathscinet-getitem?mr=3778183},
	doi = {10.1112/S0010437X17007631},
	pages = {565--593},
	number = {3},
	journaltitle = {Compositio Mathematica},
	shortjournal = {Compos. Math.},
	author = {Canakci, Ilke and Schiffler, Ralf},
	urldate = {2019-05-03},
	date = {2018},
	mrnumber = {3778183},
	file = {MathSciNet Snapshot:/Users/myac/Zotero/storage/XNUJEI6G/publdoc.html:text/html;Submitted Version:/Users/myac/Zotero/storage/B87I9VAL/Çanakçı and Schiffler - 2018 - Cluster algebras and continued fractions.pdf:application/pdf}
}

@article{jones_hecke_1987,
	title = {Hecke algebra representations of braid groups and link polynomials},
	volume = {126},
	issn = {0003-486X},
	url = {https://mathscinet.ams.org/mathscinet-getitem?mr=908150},
	doi = {10.2307/1971403},
	pages = {335--388},
	number = {2},
	journaltitle = {Annals of Mathematics. Second Series},
	shortjournal = {Ann. of Math. (2)},
	author = {Jones, Vaughan F. R.},
	urldate = {2019-05-03},
	date = {1987},
	mrnumber = {908150},
	file = {MathSciNet Snapshot:/Users/myac/Zotero/storage/4CKGTFYA/publdoc.html:text/html}
}

@article{kauffman_classification_2003,
	title = {On the classification of rational knots},
	volume = {49},
	issn = {0013-8584},
	url = {https://mathscinet.ams.org/mathscinet-getitem?mr=2028021},
	pages = {357--410},
	number = {3},
	journaltitle = {L'Enseignement Mathematique. Revue Internationale. 2e Serie},
	shortjournal = {Enseign. Math. (2)},
	author = {Kauffman, Louis H. and Lambropoulou, Sofia},
	urldate = {2019-05-03},
	date = {2003},
	mrnumber = {2028021},
	file = {MathSciNet Snapshot:/Users/myac/Zotero/storage/IWN66T9E/publdoc.html:text/html}
}

@article{nagai_cluster_2018,
	title = {Cluster variables, ancestral triangles and Alexander polynomials},
	url = {http://arxiv.org/abs/1812.02434},
	journaltitle = {{arXiv}:1812.02434},
	author = {Nagai, Wataru and Terashima, Yuji},
	urldate = {2019-05-03},
	date = {2018},
	keywords = {Mathematics - Combinatorics, Mathematics - Geometric Topology, Mathematical Physics},
	annotation = {Comment: 33 pages: typos corrected, detailed examples added in section 4},
	file = {arXiv\:1812.02434 PDF:/Users/myac/Zotero/storage/3AVNIIFR/Nagai and Terashima - 2018 - Cluster variables, ancestral triangles and Alexand.pdf:application/pdf;arXiv.org Snapshot:/Users/myac/Zotero/storage/7QAX9SMQ/1812.html:text/html}
}

@article{musiker_positivity_2011,
	title = {Positivity for cluster algebras from surfaces},
	volume = {227},
	issn = {0001-8708},
	url = {https://mathscinet.ams.org/mathscinet-getitem?mr=2807089},
	doi = {10.1016/j.aim.2011.04.018},
	pages = {2241--2308},
	number = {6},
	journaltitle = {Advances in Mathematics},
	shortjournal = {Adv. Math.},
	author = {Musiker, Gregg and Schiffler, Ralf and Williams, Lauren},
	urldate = {2019-05-03},
	date = {2011},
	mrnumber = {2807089},
	file = {MathSciNet Snapshot:/Users/myac/Zotero/storage/LKEWSYZC/publdoc.html:text/html;Submitted Version:/Users/myac/Zotero/storage/58Q9XW32/Musiker et al. - 2011 - Positivity for cluster algebras from surfaces.pdf:application/pdf}
}

@article{musiker_bases_2013,
	title = {Bases for cluster algebras from surfaces},
	volume = {149},
	issn = {0010-437X},
	url = {https://mathscinet.ams.org/mathscinet-getitem?mr=3020308},
	doi = {10.1112/S0010437X12000450},
	pages = {217--263},
	number = {2},
	journaltitle = {Compositio Mathematica},
	shortjournal = {Compos. Math.},
	author = {Musiker, Gregg and Schiffler, Ralf and Williams, Lauren},
	urldate = {2019-05-03},
	date = {2013},
	mrnumber = {3020308},
	file = {Full Text:/Users/myac/Zotero/storage/YJXV6GTA/Musiker et al. - 2013 - Bases for cluster algebras from surfaces.pdf:application/pdf;MathSciNet Snapshot:/Users/myac/Zotero/storage/LD6MTGAW/publdoc.html:text/html}
}

@article{rabideau_f-polynomial_2018,
	title = {F-polynomial formula from continued fractions},
	volume = {509},
	issn = {0021-8693},
	url = {https://mathscinet.ams.org/mathscinet-getitem?mr=3812210},
	doi = {10.1016/j.jalgebra.2018.05.019},
	pages = {467--475},
	journaltitle = {Journal of Algebra},
	shortjournal = {J. Algebra},
	author = {Rabideau, Michelle},
	urldate = {2019-05-03},
	date = {2018},
	mrnumber = {3812210},
	file = {MathSciNet Snapshot:/Users/myac/Zotero/storage/8WTGVY6J/publdoc.html:text/html;Submitted Version:/Users/myac/Zotero/storage/E5RTRAHS/Rabideau - 2018 - \$F\$-polynomial formula from continued fractions.pdf:application/pdf}
}

@article{lee_cluster_2019,
	title = {Cluster algebras and Jones polynomials},
	volume = {25},
	issn = {1022-1824},
	doi = {10.1007/s00029-019-0503-x},
	pages = {58},
	number = {4},
	journaltitle = {Selecta Mathematica},
	shortjournal = {Selecta Math.},
	author = {Lee, Kyungyong and Schiffler, Ralf},
	date = {2019},
	mrnumber = {4016518}
}

@article{bailey_cluster_2019,
	title = {Cluster algebras and binary subwords},
	url = {http://arxiv.org/abs/1910.07611},
	journaltitle = {{arXiv}:1910.07611},
	author = {Bailey, Rachel and Gunawan, Emily},
	date = {2019},
	keywords = {13F60 (primary), 05C70, 05E15 (secondary), Mathematics - Combinatorics},
	annotation = {Comment: 12 pages, 13 figures. Comments are welcome},
	file = {arXiv\:1910.07611 PDF:/Users/myac/Zotero/storage/8VET2L68/Bailey and Gunawan - 2019 - Cluster algebras and binary subwords.pdf:application/pdf;arXiv.org Snapshot:/Users/myac/Zotero/storage/INE9EHD6/1910.html:text/html}
}

\end{document}